
\documentclass[11pt, a4paper]{amsart}

\usepackage{amsfonts,amsmath,amssymb, amscd}

\usepackage[all]{xy}

\newtheorem{theorem}{Theorem}[section]
\newtheorem{lemma}[theorem]{Lemma}
\newtheorem{prop}[theorem]{Proposition}
\newtheorem{corollary}[theorem]{Corollary}
\newtheorem{lemmadef}[theorem]{Lemma-Definition}

\theoremstyle{definition}
\newtheorem{definition}[theorem]{Definition}
\newtheorem{rem}[theorem]{Remark}

\newtheorem{Qu}[theorem]{Question}
\newtheorem{example}[theorem]{Example}
\newtheorem{problem}[theorem]{Problem}

\usepackage{color}

\newcommand\pf{\begin{proof}}
\newcommand\epf{\end{proof}}
\newcommand{\cmdblackltimes}{\mathop{\raisebox{0.2ex}{\makebox[0.92em][l]{${\scriptstyle\blacktriangleright\mathrel{\mkern-4mu}<}$}}}}

\newcommand{\bcp}{\mathop{\raisebox{0.2ex}{\makebox[0.92em][l]{${\scriptstyle\blacktriangleright\mathrel{\mkern-4mu}\scriptstyle\vartriangleleft}$}}}}

\newcommand{\G}{\mathbb{G}}

\numberwithin{equation}{section}

\hyphenation{comod-ule}

\title[algebraic supergroups]
{Solvability and nilpotency for\\ algebraic supergroups}

\author[A.~Masuoka]{Akira Masuoka}
\address{Akira Masuoka,
Institute of Mathematics, 
University of Tsukuba, 
Ibaraki 305-8571, Japan}
\email{akira@math.tsukuba.ac.jp}

\author[A.~N.~Zubkov]{Alexandr N. Zubkov}
\address{Alexandr N. Zubkov, 
Omsk State Polytechnic University, Mira 11, 644050, Russia}
\email{a.zubkov@yahoo.com}

\begin{document}

\begin{abstract}
We study solvability, nilpotency and splitting property for algebraic supergroups over an arbitrary 
field $K$ of characteristic $\mathrm{char}\, K\ne 2$. Our first main theorem tells us that 
an algebraic supergroup $\mathbb{G}$ is solvable if the associated algebraic group $\mathbb{G}_{ev}$
is trigonalizable. To prove it we determine the algebraic supergroups $\mathbb{G}$ 
such that $\dim \mathrm{Lie}(\mathbb{G})_1=1$; their representations are studied when $\mathbb{G}_{ev}$
is diagonalizable. The second main theorem characterizes nilpotent connected algebraic supergroups. 
A super-analogue of the Chevalley Decomposition Theorem is proved, though it must be
in a weak form. An appendix is given to characterize smooth Noetherian superalgebras as well as
smooth Hopf superalgebras. 
\end{abstract}

\maketitle

\noindent
{\sc Key Words:}
algebraic supergroup, affine supergroup, Hopf superalgebra, Harish-Chandra pair, smooth superalgebra 

\medskip
\noindent
{\sc Mathematics Subject Classification (2000):}
14L15, 
14M30, 
16T05 

\section{Introduction}\label{sec:introduction}

We work over an arbitrary field $K$ of characteristic $\mathrm{char}\, K \ne 2$, unless otherwise sated.
Our aim is to pursue super-analogues of the following three: 
(1)~solvability of trigonalizable algebraic groups, (2)~nilpotency criteria for connected algebraic groups, 
(3)~the Chevalley Decomposition Theorem for affine groups. 
\emph{Affine groups} (resp., \emph{algebraic groups}) mean what are called
affine group schemes (resp., algebraic affine group schemes) in \cite{water}.

General references on supersymmetry are \cite{ccf, dm, varadarajan}. 

\subsection{Basic definitions}\label{subsec:intro1}
``Super" is a synonym of ``graded by the group $\mathbb{Z}_2=\{ 0 , 1 \}$ of order 2".
Therefore, \emph{super-vector spaces} are precisely $\mathbb{Z}_2$-graded vector spaces, $V = V_0\oplus V_1$; 
the component $V_0$ (resp., $V_1$) and its elements are called \emph{even} (resp., \emph{odd}).
Those spaces form a symmetric category, $\mathsf{SMod}_K$, with respect to the so-called 
super-symmetry. The ordinary objects, such as Hopf or Lie algebras, defined in the symmetric category of
vector spaces are generalized to the super-objects, such as Hopf or Lie superalgebras, defined in 
$\mathsf{SMod}_K$. Indeed, every ordinary object, say $A$, is regarded as a 
special super-object that is \emph{purely
even} in the sense $A=A_0$. 

An \emph{affine supergroup} is a representable group-valued functor defined on 
the category $\mathsf{SAlg}_K$ of super-commutative superalgebras. Such a functor, 
say $\mathbb{G}$, is uniquely represented by a super-commutative Hopf superalgebra. 
We denote this Hopf superalgebra by $K[\mathbb{G}]$. The category $\mathsf{Alg}_K$ of commutative
algebras, or purely even super-commutative superalgebras, is a full subcategory of  
$\mathsf{SAlg}_K$. Given an affine supergroup $\mathbb{G}$, the restricted functor 
\[ \mathbb{G}_{ev} = \mathbb{G}|_{\mathsf{Alg}_K} \]
is an affine group; this is indeed represented by the Hopf algebra $K[\mathbb{G}]/(K[\mathbb{G}]_1)$, 
where $(K[\mathbb{G}]_1)$ is the Hopf super-ideal of $K[\mathbb{G}]$ generated by the odd component $K[\mathbb{G}]_1$
of $K[\mathbb{G}]$. An affine supergroup $\mathbb{G}$ is called  
an \emph{algebraic supergroup} if $K[\mathbb{G}]$ is finitely generated as an algebra. 
The associated $\mathbb{G}_{ev}$ is then an algebraic group. 

\subsection{Solvability of even-trigonalizable supergroups}\label{subsec:intro2}
An algebraic supergroup $\mathbb{G}$ is said to be \emph{even-trigonalizable} 
(resp., \emph{even-diagonalizable}) if the algebraic group $\mathbb{G}_{ev}$ is
trigonalizable (resp., diagonalizable). Our first main result, Theorem \ref{thm:even-trigonalizable},
states that every even-trigonalizable supergroup $\mathbb{G}$ has a normal chain of closed super-subgroups
\[ \mathbb{G}_0 \lhd \mathbb{G}_1\lhd \ldots\lhd \mathbb{G}_t=\mathbb{G}, \quad t \ge 0 \]
such that $\mathbb{G}_0$ is a trigonalizable algebraic group, and each factor
$\mathbb{G}_i/\mathbb{G}_{i-1}$ is isomorphic to one of the elementary supergroups
$\mathsf{G}_a^-$, $\mathsf{G}_m$ and $\mu_n$, $n>1$; see Example \ref{ex:algebraic_(super)group}. 
As a corollary, every even-trigonalizable supergroup
is solvable; this generalizes the classical result for trigonalizable algebraic groups. 
We have also that a connected smooth algebraic supergroup $\mathbb{G}$ is solvable if and only if
$\mathbb{G}_{ev}$ is solvable. These results are proved in Section \ref{sec:main_results}. A key of the proof
is to determine those algebraic supergroups $\mathbb{G}$ whose Lie superalgebra $\mathrm{Lie}(\mathbb{G})$
has one-dimensional odd component, or in notation, $\dim \mathrm{Lie}(\mathbb{G})_1=1$; this is done
in Section \ref{sec:Ggx}. Our result presents explicitly 
such an algebraic supergroup as $G_{g,x}$, parameterizing it
by an algebraic group $G$ and elements $g \in K[G]$, $x \in \mathrm{Lie}(G)$ which satisfy some conditions;
see Lemma-Definition \ref{lemdef:Ggx}. If $G$ is a diagonalizable algebraic group $D$, then
$D_{g,x}$ is even-diagonalizable. In Section \ref{sec:Dgx} 
we discuss representations of $D_{g,x}$, determining the (injective) indecomposables and the simples.
The consequences are used in Section \ref{sec:counter-examples} when we discuss (counter-)examples. 

\subsection{Nilpotency criteria for connected supergroups}\label{subsec:intro3}
These criteria are given by our second main result, Theorem \ref{thm:nilpotency}.  
In particular, it is proved that a connected algebraic supergroup $\mathbb{G}$ is nilpotent
if and only if it fits into a central extension $1 \to F \to \mathbb{G} \to \mathbb{U} \to 1$ 
of a unipotent supergroup $\mathbb{U}$ by an algebraic group $F$ of multiplicative type. 
Section \ref{sec:nilpotent_supergroup} is devoted to proving the theorem. 
An ingredient is Proposition \ref{prop:center_ev}, 
which describes the algebraic group $\mathcal{Z}(\mathbb{G})_{ev}$ 
associated with the center $\mathcal{Z}(\mathbb{G})$ of an algebraic supergroup $\mathbb{G}$.

\subsection{Harish-Chandra pairs}\label{subsec:intro4}
To prove our results so far stated, a crucial role will be
played by the category equivalence between the algebraic supergroups and the Harish-Chandra pairs.
A \emph{Harish-Chandra pair} is a pair $(G,V)$ of an algebraic group $G$ and a finite-dimensional
right $G$-module, given as a structure a $G$-equivariant 
bilinear map $V \times V \to \mathrm{Lie}(G)$ satisfying some conditions; see Definition \ref{def:HCP}. 
This concept is due to Kostant \cite{kostant}. The category equivalence was proved by Koszul \cite{koszul} 
in the $C^{\infty}$ situation, and by Vishnyakova \cite{vish} in the complex analytic situation; see also
\cite[Section 7.4]{ccf}.  
In the algebraic situation as cited above, it was proved by
the first-named author \cite{mas3}, 
who generalized, in purely Hopf-algebraic terms, the result 
by Carmeli and Fioresi \cite{carfi} proved when $K=\mathbb{C}$; see also \cite{masshiba}. 
The result is reproduced as Theorem \ref{thm:equivalence}
in a suitable form for our argument. Section \ref{sec:HCP} is devoted also to reproducing from \cite{mas3}  
some needed results on Harish-Chandra pairs. 
The results allow us to study algebraic supergroups
through Harish-Chandra pairs which are much easier to handle with,
and produce many results in this paper, as they already did in \cite{mas3, zubgrish}. The section, indeed,
contains a new application; see Section \ref{subsec:characterize_super-diagonalizable}. 

\subsection{Super-analogue of the Chevalley Decomposition Theorem}\label{subsec:intro5}
Given an affine supergroup $\mathbb{G}$, we let $\mathbb{G}_u$ denote its unipotent radical. 
In the last two Sections \ref{sec:counter-examples} and \ref{sec:splitting_property} we ask the following question
for some sorts of $\mathbb{G}$. 
\begin{itemize}
\item[(Q)] Does the quotient morphism $\mathbb{G} \to \mathbb{G}/\mathbb{G}_u$ split?
\end{itemize}
If yes, we have $\mathbb{G} \simeq \mathbb{G}/\mathbb{G}_u \ltimes \mathbb{G}_u$. 

Proposition \ref{prop:split} answers (Q) in the positive under the assumptions 
(i)~$\mathrm{char}\, K=0$,
and (ii)~$\mathbb{G}/\mathbb{G}_u$ is linearly reductive. 
This is a super-analogue of the Chevalley Decomposition Theorem, but it, assuming (ii), is weaker in 
that for affine groups, (ii) is proved to hold under (i); the assumption (ii) is indeed necessary
in the super context; see Remark \ref{rem:split}. 
The proposition also gives a positive answer,
as a direct consequence of a classical result on affine groups, when (i)~$K$ is an algebraically closed
field of $\mathrm{char}\, K>2$, and (ii)~$\mathbb{G}/\mathbb{G}_u$ is a diagonalizable affine group. 

In Section \ref{subsec:counter-example1} we answer (Q) in the negative, giving a counter-example, when 
$\mathbb{G}/\mathbb{G}_u$ is even-diagonalizable. 
In Section \ref{subsec:counter-example2} we answer
another question, Question \ref{qu:super-diagonalizable}, in the negative; a counter-example
shows that even-diagonalizable algebraic supergroups $\mathbb{G}$ 
are not so simple even if $\mathbb{G}_u=1$. 
\vspace{0mm}

The text starts with the following Section \ref{sec:preliminaries} which is devoted to preliminaries
on Hopf superalgebras and supergroups.

An appendix has been added to answer a question posed to an earlier version of this paper,
which concerns definitions of smoothness.
The answer is given by Proposition \ref{prop:compare_with_Fioresi}, 
which shows the equivalence of our definition
with Definition 3.1 of Fioresi \cite{fioresi}. 
More essential are Theorem \ref{thm:smooth}
and Proposition \ref{prop:smooth_Hopf}, which
characterize smooth Noetherian superalgebras and smooth Hopf superalgebras, respectively;
they would be new, and of independent interest. 
The paper \cite{schmitt} by Schmitt is crucial
for the appendix. 

\section{Preliminaries}\label{sec:preliminaries}

\subsection{}\label{subsec:Hopf_superalgebra} 

In this paper, Hopf superalgebras play an important role. Given a Hopf superalgebra $A$, the coproduct, 
the counit and the antipode will be denoted by $\Delta$, $\varepsilon$ and $\mathcal{S}$, respectively, or by
$\Delta_A$, $\varepsilon_A$ and $\mathcal{S}_A$, respectively. For the coproduct we will use the
Sweedler notation of the form
\[ \Delta(a)=a_{(1)}\otimes a_{(2)}, \quad 
\Delta(a_{(1)})\otimes a_{(2)}=
a_{(1)}\otimes a_{(2)}\otimes a_{(3)}. \]

A non-zero element $g \in A$ is said to be 
\emph{grouplike} if it is even, and $\Delta(g) = g \otimes g$; we have then necessarily 
$\varepsilon(g) = 1$, $\mathcal{S}(g)=g^{-1}$. We use thus the word in a more restricted sense 
than usual, except
in Remark \ref{rem:inhomogeneous_grouplike}, which shows existence of inhomogeneous ``grouplikes". 
An element $x \in A$ is said to be \emph{primitive} if $\Delta(x) = 1 \otimes x + x \otimes 1$, and 
necessarily, $\varepsilon(x) = 0$, $\mathcal{S}(x)=-x$. All primitives in $A$ form a
super-vector subspace of $A$, which we denote by $P(A)$, whence $P(A) =P(A)_0\oplus P(A)_1$. 
An element $x \in A$ is said to be \emph{skew-primitive} if $\Delta(x) = h \otimes x + x \otimes g$
for some grouplikes $g, h \in A$, and  
necessarily, $\varepsilon(x) = 0$, $\mathcal{S}(x)=-h^{-1}xg^{-1}$. 

We let $A^+=\mathrm{Ker}\, \varepsilon$ denote the augmentation ideal of $A$. 

\subsection{}\label{subsec:supergroups}
We emphasize that
every affine group, say $G$, is regarded as the affine supergroup which assigns to each
$R\in \mathsf{SAlg}_K$, the group $G(R_0)$ of points in the even component $R_0$ of $R$. 
Therefore, given an affine supergroup $\mathbb{G}$, the associated affine group $\mathbb{G}_{ev}$
is regarded as the closed super-subgroup of $\mathbb{G}$ such that $\mathbb{G}_{ev}(R) = \mathbb{G}(R_0)$, 
$R\in \mathsf{SAlg}_K$. 

We say that an algebraic supergroup $\mathbb{G}$ is \emph{connected}, if the largest   
(purely even, commutative) finite-dimensional separable subalgebra $\pi_0(K[\mathbb{G}])$ 
of $K[\mathbb{G}]$ is trivial.
This is equivalent to saying that the associated algebraic group $\mathbb{G}_{ev}$ is connected
\cite[Page 51]{water},
since every finite-dimensional separable subalgebra of $K[\mathbb{G}]$ isomorphically maps into
$K[\mathbb{G}_{ev}]$, and every finite-dimensional separable subalgebra of $K[\mathbb{G}_{ev}]$
is uniquely such an isomorphic image.

We say that an affine supergroup $\mathbb{G}$ is \emph{smooth}, 
if $K[\mathbb{G}]$ is smooth in $\mathsf{SAlg}_K$, or namely, an epimorphism onto 
$K[\mathbb{G}]$ in $\mathsf{SAlg}_K$ splits
provided its kernel is nilpotent. As will be proved by
Proposition \ref{prop:smooth_Hopf} in the Appendix, this is equivalent
to saying that the associated affine group $\mathbb{G}_{ev}$ is smooth. 

In the remaining of this subsection we let $\mathbb{G}$ be an affine supergroup. 

A \emph{left $\mathbb{G}$-supermodule} may be defined to be a right $K[\mathbb{G}]$-super-comodule.
We let
\[ \mathbb{G}\text{-}\mathsf{SMod} \] 
denote the abelian symmetric category 
of left $\mathbb{G}$-supermodules. It is thus
identified with the category
$\mathsf{SComod}\text{-}K[\mathbb{G}]$ of right $K[\mathbb{G}]$-super-comodules.

Let $\mathbb{H}$ be a closed super-subgroup of $\mathbb{G}$. The \emph{right $\mathbb{H}$-adjoint action}
on $\mathbb{G}$ is defined by
\begin{equation*}\label{eq:adjoint_H-action} 
\mathbb{G}(R) \times \mathbb{H}(R)\to \mathbb{G}(R),\ (\gamma,\eta) \mapsto \eta^{-1}\gamma \eta
\end{equation*}
for each $R \in \mathsf{SAlg}_K$. This indeed gives a morphism 
$\mathbb{G} \times \mathbb{H} \to \mathbb{G}$, so that $K[\mathbb{G}]$
is made into a Hopf-algebra object in $\mathbb{H}\text{-}\mathsf{SMod}$.

Regard $K[\mathbb{G}]$ as a coalgebra, and let 
\[ C=\mathrm{Corad}(K[\mathbb{G}]) \]
denote its coradical; 
this is by definition (see \cite[Page 181]{sw}) 
the direct sum of all simple subcoalgebras of $K[\mathbb{G}]$. 

We say that $\mathbb{G}$ is \emph{linearly reductive} if $\mathbb{G}\text{-}\mathsf{SMod}$ is semisimple;
the condition is equivalent to that the coalgebra $K[\mathbb{G}]$ is cosemisimple, or $C = K[\mathbb{G}]$, by
\cite[Lemma 4]{mas3}. 
As is seen from Weissauer's classification result \cite{weissauer} 
(see also \cite[Theorem 7.1]{mas4}) and \cite[Theorem 45]{mas3}, 
those linearly reductive affine supergroups which are not affine groups are rather restricted 
in characteristic zero,
and are empty in positive characteristic. 

We say that $\mathbb{G}$ is \emph{unipotent} if the simple objects in $\mathbb{G}\text{-}\mathsf{SMod}$ are only the
obvious ones defined on $K$. This is equivalent to $C=K$.
We have the following.

\begin{prop}[\text{\cite[Theorem 41]{mas3}, \cite[Theorem 3.3]{zubul}}]\label{prop:unipotency}
$\mathbb{G}$ is unipotent if and only if $\mathbb{G}_{ev}$ is unipotent. 
\end{prop}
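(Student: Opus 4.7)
My plan is to translate both unipotency conditions into coradical statements---$\mathbb{G}$ (resp.\ $\mathbb{G}_{ev}$) is unipotent iff $\mathrm{Corad}(K[\mathbb{G}])=K$ (resp.\ $\mathrm{Corad}(K[\mathbb{G}_{ev}])=K$)---and then to relate the two coradicals through the canonical Hopf superalgebra surjection $\pi\colon K[\mathbb{G}]\twoheadrightarrow K[\mathbb{G}_{ev}]$, whose kernel is the super-ideal $(K[\mathbb{G}]_1)$ with even part $K[\mathbb{G}]_1^2$.

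The first key observation is that every simple subcoalgebra $S$ of $K[\mathbb{G}]$ is purely even, i.e.\ $S\subseteq K[\mathbb{G}]_0$. Indeed the decomposition $S=S_0\oplus S_1$ is into subcoalgebras (since $\Delta$ preserves the $\mathbb{Z}_2$-grading), so simplicity forces one summand to vanish; a purely odd $S=S_1$ would give $\Delta(S_1)\subseteq (S\otimes S)_1=0$, contradicting the counit axiom. Next, $\pi|_S$ is injective, because its kernel is a subcoalgebra of the simple $S$ and would otherwise embed $S$ into $\ker\pi\cap K[\mathbb{G}]_0=K[\mathbb{G}]_1^2$, on which $\varepsilon$ vanishes; so $\pi(S)$ is a simple subcoalgebra of $K[\mathbb{G}_{ev}]$ isomorphic to $S$. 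Applying the same argument to a simple subcoalgebra of $\pi^{-1}(T)$ for any simple $T\subseteq K[\mathbb{G}_{ev}]$ also lifts $T$ to some simple $S\subseteq K[\mathbb{G}]$ with $\pi(S)=T$.

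The forward direction $(\Rightarrow)$ is then immediate: if $\mathrm{Corad}(K[\mathbb{G}])=K$, then every simple subcoalgebra of $K[\mathbb{G}_{ev}]$ equals $\pi(K)=K$, so $\mathrm{Corad}(K[\mathbb{G}_{ev}])=K$. For the converse, unipotency of $\mathbb{G}_{ev}$ forces every simple $S\subseteq K[\mathbb{G}]$ to be one-dimensional, spanned by a grouplike $g\in K[\mathbb{G}]_0$ with $\pi(g)=1$; writing $g=1+u$ with $u\in K[\mathbb{G}]_1^2$, the remaining task, which is the main obstacle, is the \emph{grouplike rigidity} $u=0$. My plan here is to pass to a finitely generated Hopf sub-superalgebra containing $g$, $g^{-1}$ and the odd elements expressing $u$, which reduces us to the algebraic case in which the super-ideal $(K[\mathbb{G}]_1)$ is nilpotent. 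Taking $n\ge 2$ maximal with $u\in (K[\mathbb{G}]_1)^n$ and reducing the identity $\Delta(u)=u\otimes 1+1\otimes u+u\otimes u$ into the associated graded Hopf superalgebra of $K[\mathbb{G}]$---whose degree zero piece is $K[\mathbb{G}_{ev}]$ and whose degree one part is purely odd and generates the rest exteriorly over it---one sees that $\bar u$ would be a nonzero even primitive of positive degree, which no such graded object admits. This contradiction yields $u=0$ and completes the proof.
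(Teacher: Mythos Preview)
Your overall strategy---comparing $\mathrm{Corad}(K[\mathbb{G}])$ with $\mathrm{Corad}(K[\mathbb{G}_{ev}])$ through the quotient map $\pi$---is the right one, and the forward direction is essentially correct (a quotient of a coconnected coalgebra is coconnected). However, the argument for the converse has two genuine gaps.

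First, the claim that for a simple subcoalgebra $S$ the decomposition $S=S_0\oplus S_1$ is into subcoalgebras is false. Preservation of the $\mathbb{Z}_2$-grading only gives $\Delta(S_0)\subseteq S_0\otimes S_0+S_1\otimes S_1$, so $S_0$ need not be a subcoalgebra. Concretely, the simple super-comodules $L(h)$ of Section~\ref{sec:Dgx} (for $h\in X\setminus Y$) give graded simple subcoalgebras with both parities nonzero; over a field in which $\langle x,h\rangle$ is not a square, $L(h)$ is simple even as an ungraded coalgebra. (In fact an individual simple subcoalgebra need not be graded at all; only their sum $C$ is, because the parity involution is a coalgebra automorphism.)

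Second, the injectivity argument for $\pi|_S$ is incorrect: the kernel of a coalgebra map is a \emph{coideal}, not a subcoalgebra, and a simple coalgebra can have many nonzero proper coideals (its dual is a simple algebra, which has many subalgebras). Your observation that $\varepsilon$ vanishes on $\ker\pi$ shows only that $\pi|_S\neq 0$, not that it is injective. Since the subsequent reduction to a one-dimensional $S$ spanned by a grouplike rests on these two steps, the grouplike-rigidity argument, even if it could be made precise, does not by itself finish the proof.

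The paper does not prove Proposition~\ref{prop:unipotency} but cites it from \cite{mas3,zubul}. The relevant ingredient, invoked later in the proof of Proposition~\ref{prop:split}~(b), is \cite[Proposition~4.6(3)]{mas2}: the map $\pi$ restricts to a bijection $C\xrightarrow{\ \sim\ }\mathrm{Corad}(K[\mathbb{G}_{ev}])$. That injectivity is what you are missing; its proof uses the structural splitting $K[\mathbb{G}]\simeq K[\mathbb{G}_{ev}]\otimes\wedge(W^A)$ (see \eqref{eq:decomposition}) rather than an argument at the level of a single simple subcoalgebra.
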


Due to the faithful (co)flatness results 
for Hopf superalgebras proved by the authors \cite{mas2, zub2}, we can
discuss freely, just as for affine groups, quotient supergroups of $\mathbb{G}$, including
their correspondence with normal closed super-subgroups
of $\mathbb{G}$; see \cite{zub2, maszub, mas3}. 
To be more precise, given a normal closed super-subgroup $\mathbb{N}$ of $\mathbb{G}$, 
the dur sheafification of the group-valued functor $R \mapsto \mathbb{G}(R)/\mathbb{N}(R)$ 
is representable \cite[Theorem 6.2]{zub2}. 
The thus obtained affine supergroup is denoted simply by $\mathbb{G}/\mathbb{N}$, in this paper.
This is represented by the unique Hopf super-subalgebra $B \subset K[\mathbb{G}]$ with the property
$K[\mathbb{G}]/B^+K[\mathbb{G}]=K[\mathbb{N}]$, where $B^+K[\mathbb{G}]$ is the Hopf super-ideal of 
$K[\mathbb{G}]$ generated by $B^+ =\mathrm{Ker}\, \varepsilon_B$. If $\mathbb{G}$ is an algebraic
supergroup, then $\mathbb{G}/\mathbb{N}$ is, too. 

There exists the largest unipotent closed normal super-subgroup of $\mathbb{G}$, which is
called the \emph{unipotent radical} of $\mathbb{G}$, and is denoted by $\mathbb{G}_u$. 
Since the coradical $C$ of $K[\mathbb{G}]$ 
is a super-subcoalgebra which is stable under the antipode, the ideal $C^+K[\mathbb{G}]$ generated by 
$C^+= \mathrm{Ker}(\varepsilon_{K[\mathbb{G}]}|_C)$ is a Hopf super-ideal of $K[\mathbb{G}]$. 

\begin{lemma}\label{lem:unipotent_radical}
The unipotent radical $\mathbb{G}_u$ of $\mathbb{G}$ indeed exists, and is 
represented by the quotient Hopf superalgebra
$K[\mathbb{G}]/C^+K[\mathbb{G}]$ of $K[\mathbb{G}]$.  
\end{lemma}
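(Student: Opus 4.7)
My plan is to show that the closed super-subgroup $\mathbb{G}_u$ represented by the quotient Hopf superalgebra $A := K[\mathbb{G}]/C^+K[\mathbb{G}]$ is the largest unipotent closed normal super-subgroup of $\mathbb{G}$. Setting $H := K[\mathbb{G}]$ and $I := C^+H$, I would proceed in three steps.

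First, I verify that $I$ is a Hopf super-ideal: since $1 \in C$ (as the one-dimensional coalgebra $K\cdot 1$ is simple) and $\Delta(C) \subseteq C \otimes C$, the decomposition
\[
\Delta(c) \;=\; (c_{(1)} - \varepsilon(c_{(1)})\cdot 1)\otimes c_{(2)} \;+\; 1\otimes c
\]
for $c \in C^+$ shows $\Delta(c) \in I \otimes H + H \otimes I$, and $\mathcal{S}(I) \subseteq I$ follows from $\mathcal{S}(C^+) \subseteq C^+$ together with super-commutativity (so that $HC^+ = C^+H$). Hence $A$ represents a closed super-subgroup $\mathbb{G}_u \subseteq \mathbb{G}$.

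Next, I show $\mathbb{G}_u$ is unipotent via the coradical filtration: with $\{H_n\}_{n\ge 0}$ the coradical filtration of $H$ (so $H_0 = C$, $H = \bigcup_n H_n$, $\Delta(H_n) \subseteq \sum_{i+j=n} H_i \otimes H_j$), the pushforward $\{\pi(H_n)\}$ along $\pi\colon H \to A$ is an exhaustive coalgebra filtration of $A$. The identity $C \cap I = C^+$ (the non-trivial inclusion $\subseteq$ follows by applying $\varepsilon$) gives $\pi(H_0) = C/C^+ \cong K$. The standard fact that the coradical of any filtered coalgebra lies in its zeroth term then yields $\mathrm{Corad}(A) \subseteq K$, so $\mathbb{G}_u$ is unipotent.

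For maximality, let $\mathbb{U}$ be any unipotent closed normal super-subgroup with $K[\mathbb{U}] = H/J$, where $J = B^+H$ for the Hopf super-subalgebra $B = K[\mathbb{G}/\mathbb{U}]$. For any finite-dimensional simple $\mathbb{G}$-supermodule $V$, its restriction to $\mathbb{U}$ is unipotent by Proposition \ref{prop:unipotency}, so $V^{\mathbb{U}} \ne 0$; since $\mathbb{U}$ is normal, $V^{\mathbb{U}}$ is $\mathbb{G}$-stable, and simplicity of $V$ forces $V^{\mathbb{U}} = V$, i.e., $V$ factors through $\mathbb{G}/\mathbb{U}$. Hence the coefficient subcoalgebra of each simple $\mathbb{G}$-supermodule lies in $B$, whence $C \subseteq B$, giving $C^+ \subseteq J$ and thus $I \subseteq J$, i.e., $\mathbb{U} \subseteq \mathbb{G}_u$. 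Normality of $\mathbb{G}_u$ itself follows from the intrinsic character of $C$: it is stable under every Hopf algebra automorphism of $H$ and hence under the adjoint action, so $I$ is stable too.

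The most substantive step is the maximality argument; its crux is that normality of $\mathbb{U}$ promotes $V^{\mathbb{U}}$ to a $\mathbb{G}$-sub-supermodule, allowing the representation-theoretic characterization of unipotency to force every simple $\mathbb{G}$-supermodule to factor through $\mathbb{G}/\mathbb{U}$. A direct coalgebra-theoretic argument via $\pi_J(C) \subseteq \mathrm{Corad}(H/J)$ would be subtle, since quotients of cosemisimple coalgebras need not be cosemisimple; the representation-theoretic route sidesteps this issue. The remaining steps are routine Hopf-algebraic manipulations.
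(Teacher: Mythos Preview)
Your proof is correct and, for the maximality step, takes a genuinely different route from the paper's. The paper invokes (a super-analogue of) \cite[Lemma~4]{tak} to show in one stroke that, for a Hopf super-subalgebra $B\subset H$, one has $\mathrm{Corad}(H/B^+H)=K$ if and only if $C\subset B$; both unipotency of $H/C^+H$ and maximality then drop out. You instead prove the two directions separately: unipotency via the pushforward of the coradical filtration (which is essentially the ``if'' direction of Takeuchi's lemma), and maximality by a representation-theoretic argument---normality of $\mathbb{U}$ forces $V^{\mathbb{U}}$ to be a $\mathbb{G}$-submodule of each simple $V$, hence $V^{\mathbb{U}}=V$, hence $\mathrm{cf}(V)\subset B$. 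This module-theoretic argument is more transparent and self-contained than citing \cite{tak}, at the cost of being slightly longer.

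Two small points. First, the step ``$V$ factors through $\mathbb{G}/\mathbb{U}$, so its coefficient coalgebra lies in $B$'' uses the identification $B=H^{\mathrm{co}(H/J)}$, which is exactly the faithful (co)flatness invoked just before the lemma; alternatively you can bypass this by observing directly that $V^{\mathbb{U}}=V$ gives $\pi_J(h_{ij})=\delta_{ij}\cdot 1$ for the matrix coefficients $h_{ij}$, whence $\mathrm{cf}(V)\subset K+J$ and so $C^+\subset J$. Second, your normality argument (``$C$ is stable under every Hopf-algebra automorphism of $H$, hence under the adjoint action'') is not quite right as stated: the adjoint action is a coaction $\lambda:H\to H\otimes H$, not a family of $K$-automorphisms, and the coradical does not base-change well. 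The fix is immediate, however: since $C$ is an $\mathcal{S}$-stable subcoalgebra, the explicit formula $\lambda(h)=h_{(2)}\otimes\mathcal{S}(h_{(1)})h_{(3)}$ (with the appropriate signs) shows $\lambda(C)\subset C\otimes H$ directly, and then $\lambda(I)\subset I\otimes H$ follows because $\lambda$ is multiplicative. (Your reference to Proposition~\ref{prop:unipotency} is also misplaced; you only need the definition of unipotency.)
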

\begin{proof} 
Set $A=K[\mathbb{G}]$. A Hopf super-subalgebra $B \subset A$ represents 
the affine group $\mathbb{G}/\mathbb{N}$, where $\mathbb{N}$ is the closed normal
super-subgroup of $\mathbb{G}$
represented by
\begin{equation}\label{eq:quotient_Hopf}
A/B^+A. 
\end{equation}
See \cite[Theorem 5.9]{mas2}.
Slightly modifying the proof of \cite[Lemma 4]{tak}, we see that $A/B^+A$
has the property $\mathrm{Corad}(A/B^+A)=K$ (or the $\mathbb{N}$ above is unipotent)
if and only if $C\subset B$. Therefore, the Hopf superalgebra $B_C$ generated by $C$ is the smallest
Hopf super-subalgebra of $A$ such that $A/B_C^+A$ has the property. Since $B_C^+A=C^+A$, 
the desired result follows. 
\end{proof}

One sees from the proof above that $\G/\G_u$, being represented by $B_C$, has a trivial unipotent
radical, or in notation,
\begin{equation}\label{eq:trivial_uni_rad}
(\G/\G_u)_u=1. 
\end{equation}

\subsection{}\label{subsec:even-diagonalizable}
Recall that an algebraic group $G$ is said to be \emph{diagonalizable} if every $G$-module
is a direct sum of one-dimensional $G$-modules, or equivalently, if the Hopf algebra $K[G]$
is spanned by grouplikes. We say that $G$ is \emph{trigonalizable} if every simple $G$-module
is one-dimensional,
or equivalently, if the coradical $\mathrm{Corad}(K[G])$ of $K[G]$ is
spanned by grouplikes. Diagonalizable algebraic groups are trigonalizable. 

\begin{definition}\label{def:even-diagonalizable}
(1)\ Recall from Section \ref{subsec:intro2} that
an algebraic supergroup $\mathbb{G}$ is said to be \emph{even-diagonalizable} (resp., \emph{even-trigonalizable})
if the algebraic group $\mathbb{G}_{ev}$ is diagonalizable (resp., trigonalizable). 

(2)\ An even-diagonalizable supergroup $\mathbb{G}$ is said to be \emph{super-diagonalizable}, if
in addition, $\mathbb{G}_u=1$.

(3)\ An algebraic supergroup $\mathbb{G}$ is said to be \emph{super-trigonalizable}, if
$\mathbb{G}/\mathbb{G}_u$ is even- or equivalently (see \eqref{eq:trivial_uni_rad}), super-diagonalizable. 
\end{definition}

Obviously, even-diagonalizable (resp., super-diagonalizable) supergroups are even-trigonalizable
(resp., super-trigonalizable). By definition, super-diagonalizable supergroups are even-diagonalizable.

\begin{lemma}\label{lem:even_vs_super}
An algebraic supergroup $\mathbb{G}$ is super-trigonalizable if and only if it is even-trigonalizable
and $(\mathbb{G}_u)_{ev}= (\mathbb{G}_{ev})_u$. 
\end{lemma}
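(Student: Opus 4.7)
My plan is to handle the two implications separately, both reducing to the identity $(\mathbb{G}/\mathbb{N})_{ev}=\mathbb{G}_{ev}/\mathbb{N}_{ev}$ for a normal closed super-subgroup $\mathbb{N}$ (applied to $\mathbb{N}=\mathbb{G}_u$), to Proposition \ref{prop:unipotency} identifying unipotency with unipotency of the even part, and to the classical structure of trigonalizable affine groups.

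For the ``if'' direction, I would assume that $\mathbb{G}$ is even-trigonalizable and that $(\mathbb{G}_u)_{ev}=(\mathbb{G}_{ev})_u$. Applying the identity above to $\mathbb{N}=\mathbb{G}_u$ gives $(\mathbb{G}/\mathbb{G}_u)_{ev}=\mathbb{G}_{ev}/(\mathbb{G}_u)_{ev}=\mathbb{G}_{ev}/(\mathbb{G}_{ev})_u$. Since $\mathbb{G}_{ev}$ is trigonalizable, the classical structure theorem says this last quotient is diagonalizable, so $\mathbb{G}/\mathbb{G}_u$ is even-diagonalizable. Combined with $(\mathbb{G}/\mathbb{G}_u)_u=1$ from \eqref{eq:trivial_uni_rad}, this means $\mathbb{G}/\mathbb{G}_u$ is super-diagonalizable, i.e.\ $\mathbb{G}$ is super-trigonalizable by Definition \ref{def:even-diagonalizable}(3).

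For the ``only if'' direction, I would assume $\mathbb{G}$ is super-trigonalizable, so that $(\mathbb{G}/\mathbb{G}_u)_{ev}=\mathbb{G}_{ev}/(\mathbb{G}_u)_{ev}$ is diagonalizable. Since $\mathbb{G}_u$ is unipotent, Proposition \ref{prop:unipotency} yields that $(\mathbb{G}_u)_{ev}$ is unipotent; thus $\mathbb{G}_{ev}$ is an extension of a diagonalizable group by a unipotent normal subgroup. Any simple $\mathbb{G}_{ev}$-module is then inflated from a simple module of the diagonalizable quotient, hence one-dimensional, so $\mathbb{G}_{ev}$ is trigonalizable and $\mathbb{G}$ is even-trigonalizable. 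The inclusion $(\mathbb{G}_u)_{ev}\subseteq(\mathbb{G}_{ev})_u$ is automatic, since $(\mathbb{G}_u)_{ev}$ is a normal unipotent closed subgroup of $\mathbb{G}_{ev}$; the reverse inclusion follows because $\mathbb{G}_{ev}/(\mathbb{G}_u)_{ev}$ is diagonalizable and hence has trivial unipotent radical, so any normal unipotent subgroup of $\mathbb{G}_{ev}$ must lie in $(\mathbb{G}_u)_{ev}$.

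The only slightly delicate point is the compatibility $(\mathbb{G}/\mathbb{N})_{ev}=\mathbb{G}_{ev}/\mathbb{N}_{ev}$, which I would verify from the Hopf-algebraic description recalled in Section \ref{subsec:supergroups}: if $\mathbb{G}/\mathbb{N}$ is represented by the Hopf super-subalgebra $B\subset K[\mathbb{G}]$ with $K[\mathbb{G}]/B^+K[\mathbb{G}]=K[\mathbb{N}]$, then modding out the odd parts on both sides and using the faithful coflatness of $K[\mathbb{G}]$ over $B$ identifies the image of $B$ in $K[\mathbb{G}_{ev}]$ as the Hopf subalgebra representing $\mathbb{G}_{ev}/\mathbb{N}_{ev}$. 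Apart from this bookkeeping the proof is short, so this identification (which is standard and appears implicitly in the cited reference \cite{mas2}) is the only real obstacle.
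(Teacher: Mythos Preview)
Your proof is correct and follows essentially the same approach as the paper: both reduce to the short exact sequence $1\to(\mathbb{G}_u)_{ev}\to\mathbb{G}_{ev}\to(\mathbb{G}/\mathbb{G}_u)_{ev}\to 1$, use that $(\mathbb{G}_u)_{ev}$ is unipotent, and then invoke the classical characterization of trigonalizable groups as extensions of diagonalizable groups by their unipotent radical. The paper simply cites \cite[Theorem 5.13 (3)]{mas2} for the compatibility $(\mathbb{G}/\mathbb{G}_u)_{ev}=\mathbb{G}_{ev}/(\mathbb{G}_u)_{ev}$ rather than sketching it, and compresses both implications into a single biconditional sentence, but the substance is identical.
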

\begin{proof}
Given an algebraic supergroup $\mathbb{G}$, we have the short exact sequence 
$1\to \mathbb{G}_u \to \mathbb{G} \to \mathbb{G}/\mathbb{G}_u\to 1$. 
This gives rise to the short exact sequence 
$1\to (\mathbb{G}_u)_{ev} \to \mathbb{G}_{ev} \to (\mathbb{G}/\mathbb{G}_u)_{ev}\to 1$
of algebraic groups, by \cite[Theorem 5.13 (3)]{mas2}. Note that $(\mathbb{G}_u)_{ev}$ is unipotent. 
Then we see that $(\mathbb{G}/\mathbb{G}_u)_{ev}$ is diagonalizable if and only if $\mathbb{G}_{ev}$
is trigonalizable and $(\mathbb{G}_u)_{ev}= (\mathbb{G}_{ev})_u$. This proves the lemma. 
\end{proof}

Here are examples of elementary algebraic supergroups. 

\begin{example}\label{ex:algebraic_(super)group}
(1)\ Let $\mathsf{G}_m$ denote the multiplicative group, and
$\mu_n$ the closed subgroup of $n$-th roots of unity; these are diagonalizable. 
Thus, $K[\mathsf{G}_m]=K[t, t^{-1}]$, $K[\mu_n]=K[t]/(t^n-1)$, with $t$ grouplike. 

(2)\ Let $\mathsf{G}_a$ denote the additive group; this is unipotent. 
Thus $K[\mathsf{G}_a]$ is the polynomial algebra $K[t]$
with $t$ (even) primitive.

(3)\ Let $\mathsf{G}_a^-$ be the algebraic supergroup represented by the exterior algebra 
$\wedge(z)=K[z]/(z^2)$ generated by one odd primitive $z\ne 0$; this is unipotent. Note
that $\mathsf{G}_a^-(R)$ is the additive group $R_1$, where $R \in \mathsf{SAlg}_K$. 
\end{example}

\section{Harish-Chandra pairs}\label{sec:HCP}

\subsection{}\label{subsec:HCPbasics}

Let $G$ be an algebraic group. We have the Lie algebra $\mathrm{Lie}(G)$ of $G$, which is finite-dimensional, 
and a canonical pairing 
\begin{equation}\label{eq:pairing}
\langle \ , \ \rangle : \mathrm{Lie}(G) \times K[G] \to K. 
\end{equation}
We regard $\mathrm{Lie}(G)$ as a right $G$-module with respect to the structure 
which is induced from the right $G$-adjoint action on $G$; the structure also
will be called the \emph{right $G$-adjoint action}.  
To be explicit, let
$x\in \mathrm{Lie}(G)$ and $\gamma \in G(R)$, where $R$ is 
a commutative algebra. Then the result $x^{\gamma}$ of the action is determined by
\begin{equation}\label{eq:adjoint}
\langle x^{\gamma}, a \rangle 
= \gamma^{-1}(a_{(1)})\langle x, a_{(2)}\rangle\gamma(a_{(3)}), \quad
a \in K[G].
\end{equation}

Given a right $G$-module $V$, a natural right $\mathrm{Lie}(G)$-module structure
is induced on $V$. We present this as $v \triangleleft x$, where $v \in V$, $x\in \mathrm{Lie}(G)$. 
The element is explicitly given by
\[ v \triangleleft x = \langle x, v_{(-1)}\rangle \, v_{(0)}, \]
where $v \mapsto v_{(-1)}\otimes v_{(0)},\ V \to K[G]\otimes V$ denotes the left $K[G]$-comodule
structure on $V$ which gives the right $G$-module structure.

The concept of \emph{Harish-Chandra pairs} goes back to Kostant \cite{kostant}. The definition below
is reproduced from \cite{mas4}. Its presentation is slightly different from the one in fashion found 
in \cite{ccf, carfi, gavarini, vish}, and is indeed a translation of \cite[Definition 7]{mas3} given in Hopf-algebraic terms. 

\begin{definition}\label{def:HCP}
A \emph{Harish-Chandra pair} is a pair $(G,V)$ of an algebraic group $G$ and a finite-dimensional
right $G$-module $V$, which is given 
a $G$-equivariant bilinear map $[\ , \ ] : V \times V \to \mathrm{Lie}(G)$ 
such that
\begin{itemize}
\item[(i)] $[v,v']=[v',v]$, 
\item[(ii)] $v \triangleleft[v,v]= 0$, 
\end{itemize}
where $v,v' \in V$. 
\end{definition}

The bilinear map above is called the \emph{bracket} associated with the pair. All Harish-Chandra pairs
form a category $\mathsf{HCP}$. A morphism $(\phi, \psi) : (G_1,V_1) \to (G_2,V_2)$ consists
of a morphism $\phi : G_1 \to G_2$ of algebraic groups and a linear map $\psi : V_1\to V_2$ such that
\begin{itemize}
\item[(a)] $\psi$ is $G_1$-equivariant, with $V_2$ regarded as a $G_1$-module through $\phi$,
\item[(b)] $[\psi(v), \psi(v') ] =\mathrm{Lie}(\phi)([v, v'])$, \quad $v, v' \in V $.
\end{itemize}

Let $\mathsf{ASG}$ denote the category of algebraic supergroups. It is anti-isomorphic to the category
of finitely generated super-commutative Hopf superalgebras.

Let $\mathbb{G} \in \mathsf{ASG}$. 
We have the Lie superalgebra $\mathrm{Lie}(\mathbb{G})$ of $\mathbb{G}$, which is finite-dimensional, 
and a canonical pairing, $\langle \ , \ \rangle : \mathrm{Lie}(\mathbb{G}) \times K[\mathbb{G}] \to K$, analogous
to \eqref{eq:pairing}; see \cite[Section 4.1]{masshiba}, for example. 
Set
\[ G = \mathbb{G}_{ev}, \quad V = \mathrm{Lie}(\mathbb{G})_1 . \]
The latter $V$ is the odd component of $\mathrm{Lie}(\mathbb{G})$. Regard this $V$ as a right 
$G$-module with respect to the $G$-action defined by the formula
\begin{equation}\label{eq:analogous_adjoint}
\langle v^{\gamma}, a \rangle 
= \gamma^{-1}(\overline{a}_{(1)}) \langle v, a_{(2)}\rangle \gamma(\overline{a}_{(3)}), \quad
v \in V,\ \gamma \in G(R),\ a \in K[\mathbb{G}]
\end{equation}
analogous to \eqref{eq:adjoint}, where $a \mapsto \overline{a}$, $K[\mathbb{G}]\to K[G]$ denotes
the quotient map. Since the even component $\mathrm{Lie}(\mathbb{G})_0$ of $\mathrm{Lie}(\mathbb{G})$
coincides with $\mathrm{Lie}(G)$, 
the structure map of $\mathrm{Lie}(\mathbb{G})$ restricts to
\[ [\ , \ ] : V \times V \to \mathrm{Lie}(G). \]

\begin{theorem}\label{thm:equivalence}
Given the last map as the bracket, $(G, V)$ is a Harish-Chandra pair.
The assignment $\mathbb{G} \mapsto
(G, V)$ above is functorial, and gives a category equivalence $\mathsf{ASG}\to \mathsf{HCP}$.  
\end{theorem}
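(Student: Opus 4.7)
My plan is to construct a quasi-inverse functor $\mathsf{HCP} \to \mathsf{ASG}$ and check that it is mutually quasi-inverse to the functor $\mathbb{G} \mapsto (G,V)$ of the statement. First I verify that the pair $(G,V)$ produced from $\mathbb{G}$ satisfies the axioms of a Harish-Chandra pair. The super-symmetry of the Lie superbracket on two odd elements gives $[v,v']=[v',v]$. The super-Jacobi identity $[v,[v,v]]=0$ for odd $v$, combined with the identification $v\triangleleft x = [v,x]$ for $x\in\mathrm{Lie}(G)$ coming from the derived $G$-module action on $V$, yields $v\triangleleft[v,v]=0$. $G$-equivariance of $[\,,\,]$ translates the compatibility of the right $G$-adjoint action on $\mathrm{Lie}(\mathbb{G})$ with the superbracket, which is a direct consequence of \eqref{eq:analogous_adjoint} and the fact that the superbracket is defined from the Hopf superalgebra structure on $K[\mathbb{G}]$.

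Next I construct the quasi-inverse. Given $(G,V)\in\mathsf{HCP}$, I extend the data to a Lie superalgebra $\mathfrak{g}=\mathrm{Lie}(G)\oplus V$ whose mixed bracket differentiates the $G$-action on $V$ and whose odd-odd bracket is the given $[\,,\,]$; the Harish-Chandra axioms are exactly what is needed for this to be a Lie superalgebra. From $(G,\mathfrak{g})$ I build a super-commutative Hopf superalgebra $A$ as the Hopf-algebraic dual of a bosonization of $U(\mathfrak{g})$ over $K[G]$; concretely, $A$ is realized as $K[G]\otimes\wedge(V^*)$ with multiplication twisted by $[\,,\,]$ and comultiplication twisted by the $G$-action on $V$. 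The algebraic supergroup $\mathbb{G}(G,V)$ represented by $A$ defines the value of the quasi-inverse on objects; the extension to morphisms is routine.

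Finally I verify mutual quasi-inverseness. In the direction $(G,V)\mapsto\mathbb{G}(G,V)\mapsto(G',V')$, I read off $G'=G$ from the quotient $A/(A_1)\cong K[G]$, and identify $V'=V$ via the generators $V^*\subset A_1$, using that the Lie superbracket on $\mathrm{Lie}(\mathbb{G}(G,V))$ restricts to the input $[\,,\,]$. In the other direction, starting from $\mathbb{G}$, I need the structural result that $K[\mathbb{G}]$ splits as $K[\mathbb{G}_{ev}]\otimes\wedge(V^*)$, with $V=\mathrm{Lie}(\mathbb{G})_1$, as a right $K[\mathbb{G}_{ev}]$-super-comodule algebra, compatibly with the full Hopf superalgebra structure encoded by the associated Harish-Chandra pair. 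This Hopf-superalgebraic Koszul-type splitting rests on the faithful (co)flatness results for Hopf superalgebras of \cite{mas2, zub2}.

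The hard part will be the construction of the quasi-inverse: verifying that the twisted product and coproduct on $K[G]\otimes\wedge(V^*)$ genuinely yield a super-commutative Hopf superalgebra. The subtle compatibilities --- associativity, coassociativity, and the bialgebra axiom between the twisted structures --- invoke both the $G$-equivariance and the super-Jacobi conditions of the Harish-Chandra pair in full. This is the computational core of the proof in \cite{mas3, masshiba}, from which I would import the verifications rather than redoing them from scratch.
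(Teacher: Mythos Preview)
The paper does not prove this theorem at all: immediately after the statement it writes ``This is a reformulation of \cite[Theorem 29]{mas3}'', and simply cites the Hopf-algebraic proof there (together with \cite{masshiba} and \cite{carfi}). Your proposal is a faithful sketch of exactly that cited argument --- verify the Harish-Chandra axioms from the Lie-superalgebra identities, build the quasi-inverse on $K[G]\otimes\wedge(V^*)$ with twisted (co)product, and invoke the splitting \eqref{eq:tensor_decomposition} for the other direction --- and you correctly note that the computational core is to be imported from \cite{mas3, masshiba}. So your approach and the paper's coincide; the only difference is that you outline the structure of the cited proof, whereas the paper states the result and points to the reference.

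One small caution: your derivation of $v\triangleleft[v,v]=0$ from the super-Jacobi identity alone requires $\mathrm{char}\,K\ne 3$; in characteristic~$3$ the identity $[v,[v,v]]=0$ for odd $v$ is customarily taken as part of the definition of a Lie superalgebra (and is so in the references), so the conclusion still holds, but the justification should be phrased accordingly.
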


This is a reformulation of \cite[Theorem 29]{mas3}, which formulated the result
in Hopf-algebraic terms; see also \cite{masshiba}, especially Remarks 4.5 and 4.27 therein.
Carmeli and Fioresi \cite[Theorem 3.12]{carfi} proved the result essentially as formulated above,  
when $K=\mathbb{C}$. 

\begin{rem}\label{rem:quasi-inverse}
A quasi-inverse of the equivalence $\mathbb{G}\mapsto (G,V)$ is explicitly constructed in 
\cite[Section 4.6]{mas3}. 
Some details of the construction will be given in the proof of Proposition \ref{prop:Ggx} in
a special situation. 
We remark here that the superalgebra structure of $K[\mathbb{G}]$
recovers from the Harish-Chandra pair $(G,V)$ assigned to $\mathbb{G}$, so simply as
\begin{equation}\label{eq:tensor_decomposition}
K[\mathbb{G}] = K[G] \otimes \wedge(V)\ \text{as a superalgebra}. 
\end{equation} 
It follows that an algebraic supergroup $\mathbb{G}$ is purely even, or it is an algebraic group, 
if and only if $\mathrm{Lie}(\mathbb{G})$ is purely even. 
The isomorphism \eqref{eq:tensor_decomposition} can be chosen, in fact, so as 
to preserve the left $K[G]$-comodule structure and the counit, as well; see \cite[Theorem 4.5]{mas2}. 
\end{rem}

\begin{rem}\label{rem:equivalence_generalized}
Theorem 4.23 of \cite{masshiba} generalizes the equivalence $\mathsf{ASG}\to \mathsf{HCP}$ above, 
replacing $K$ with a commutative ring, say $R$, which is $2$-\emph{torsion free} in the sense that
$2 : R \to R$ is injective; see also \cite{gavarini, masshiba2}. 
Working over such a ring one poses in \cite{masshiba}
some additional assumptions to the objects, to define
$\mathsf{ASG}$ and $\mathsf{HCP}$; it would, however, deserve to remark that the splitting property
\eqref{eq:tensor_decomposition}, which is included in the added assumptions, was proved to hold,
very recently by \cite{masshiba2}, and so it is needless to assume. 
The cited theorem of \cite{masshiba}
will be used in the proof of
Proposition \ref{prop:center_ev}.  
\end{rem}

\subsection{}\label{subsec:normalsubHCP}
Proposition 34 of \cite{mas3} characterizes those sequences in $\mathsf{HCP}$ which correspond to
short exact sequences in $\mathsf{ASG}$. We will reproduce below the result as two lemmas,
in a suitable form for the subsequent argument.

Let $\mathbb{G}$ be an algebraic supergroup, and 
$(G, V)$ the corresponding Harish-Chandra pair. Let $(H,W)$ be a sub-object of the pair; 
it consists of a closed subgroup $H \subset G$ and an
$H$-submodule $W \subset V$ such that the bracket associated with $(G,V)$ satisfies
$[W,W] \subset \mathrm{Lie}(H)$. It is assigned uniquely to a closed super-subgroup, say $\mathbb{H}$,
of $\mathbb{G}$. 

\begin{lemma}\label{lem:normal}
$\mathbb{H}$ is normal in $\mathbb{G}$ if and only if
\begin{itemize}
\item[(i)] $H$ is normal in $G$, 
\item[(ii)]  $W$ is $G$-stable in $V$, 
\item[(iii)] the induced $G$-module action on $V/W$, restricted to $H$, is trivial, and
\item[(iv)] $[V, W] \subset \mathrm{Lie}(H)$. 
\end{itemize}
\end{lemma}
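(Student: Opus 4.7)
My plan is to reduce normality of $\mathbb{H}$ in $\mathbb{G}$ to conditions on the Harish-Chandra pair by testing the adjoint action on $R$-points, using the quasi-inverse construction behind Theorem~\ref{thm:equivalence}. The key input, sketched in Remark~\ref{rem:quasi-inverse} and spelled out in \cite[Section~4.6]{mas3}, is that every $R$-point of $\mathbb{G}$ factors as a product of a purely even point $\gamma\in G(R_0)$ and an ``odd-exponential'' $\zeta(v,\theta)$ parameterized by $v\in V$ and a nilpotent odd $\theta\in R_1$, and similarly for $\mathbb{H}(R)$ with $w\in W$ in place of $v$. By multiplicativity of conjugation, the normality condition $\eta^{-1}\gamma\eta\in\mathbb{H}(R)$ therefore splits according to the parities of $\eta$ and of $\gamma$ into four sub-conditions, which I expect to correspond one-to-one to (i)--(iv).

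First I would handle the ``only if'' direction by picking out the four test cases. The pair \emph{(even,\,even)} with $\eta\in G(R_0)$ and $\gamma\in H(R_0)$ gives normality of $H$ in $G$, hence (i). The pair \emph{(even,\,odd)} with $\eta\in G(R_0)$ and $\gamma=\zeta(w,\theta)$ conjugates to $\zeta(w^{\eta},\theta)$ via~\eqref{eq:analogous_adjoint}, so landing in $\mathbb{H}(R)$ for all such pairs forces $W$ to be $G$-stable, i.e.\ (ii). The pair \emph{(odd,\,even)} with $\eta=\zeta(v,\theta)$ and $\gamma\in H(R_0)$ yields, upon direct expansion (which terminates because $\theta^2=0$), the product of $\gamma$ with an odd-exponential whose $V$-parameter is $v-v^{\gamma}$; landing in $\mathbb{H}(R)$ for all $v$ and all $\gamma\in H$ forces $v-v^{\gamma}\in W$, i.e.\ trivial $H$-action on $V/W$, which is (iii). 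Finally the pair \emph{(odd,\,odd)} with $\eta=\zeta(v,\theta)$, $\gamma=\zeta(w,\theta')$ acquires in the conjugate an even correction of order $\theta\theta'$ valued in $\mathrm{Lie}(G)$ and equal (up to sign) to $[v,w]$, whose landing in $\mathrm{Lie}(H)$ is exactly (iv).

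The converse direction would then be the reassembly: assuming (i)--(iv), I would factor arbitrary elements of $\mathbb{G}(R)$ and $\mathbb{H}(R)$ as above and combine the four sub-conditions by multiplicativity of conjugation. The hard part will be the sign-and-order bookkeeping in expanding $\zeta(v,\theta)^{-1}\cdot(-)\cdot\zeta(v,\theta)$; although each expansion terminates, keeping track of contributions at orders $\theta$, $\theta'$ and $\theta\theta'$, and matching them correctly against the two ``odd'' conditions (iii) and (iv), is delicate. Fortunately the requisite identities are already recorded in the explicit quasi-inverse construction of \cite[Section~4.6]{mas3}. Since the lemma is explicitly billed as a reformulation of \cite[Proposition~34]{mas3}, an alternative and less laborious route is simply to cite the latter and translate its Hopf-algebraic conditions into the Harish-Chandra pair conditions (i)--(iv); most of the work above would then reduce to matching definitions.
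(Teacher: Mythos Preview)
The paper does not give a proof of this lemma at all: it is stated as a reformulation of \cite[Proposition~34]{mas3}, exactly as you yourself note in your final sentence. So there is nothing in the paper to compare your argument against beyond that citation.

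Your direct approach via functor points is a genuinely different route. Factoring an arbitrary $R$-point of $\mathbb{G}$ as a purely even point times odd exponentials, and then splitting the conjugation test into the four parity cases, is a correct and conceptually transparent strategy; the identifications you make---\emph{(even,\,even)}$\Rightarrow$(i), \emph{(even,\,odd)}$\Rightarrow$(ii), \emph{(odd,\,even)}$\Rightarrow$(iii), \emph{(odd,\,odd)}$\Rightarrow$(iv)---are the right ones, and the computations you sketch (e.g.\ $\zeta(v,\theta)^{-1}\gamma\,\zeta(v,\theta)=\gamma\cdot\zeta(v-v^{\gamma},\theta)$ using $\theta^{2}=0$) are on track. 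What this buys over the paper's bare citation is an argument that is self-contained within the Harish-Chandra pair language and makes the correspondence between the four conditions and the four parity sectors visible. The price is the bookkeeping you flag in the converse direction: when both $\eta$ and $\gamma$ have nontrivial even \emph{and} odd parts, the mixed terms must be shown to close up using (i)--(iv) simultaneously, and one must check that the ``odd exponential'' factorization of $\mathbb{G}(R)$ really behaves functorially as you use it (this is where the explicit formulas of \cite[Section~4.6]{mas3} are needed). None of this is a gap, but it is where a careful write-up would spend most of its effort.
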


Suppose the conditions above are satisfied. Then by (i)--(iii), we have the quotient algebraic group $G/H$, and
$V/W$ naturally turns into a right $G/H$-module. 
By (iv), the bracket above induces 
$[\ , \ ] : V/W\times V/W \to \mathrm{Lie}(G)/\mathrm{Lie}(H) \subset \mathrm{Lie}(G/H)$. 

\begin{lemma}\label{lem:quotientHCP}
$(G/H, V/W)$, given the induced bracket above, forms a Harish-Chandra pair, which is assigned to the
quotient algebraic supergroup $\mathbb{G}/\mathbb{H}$. 
\end{lemma}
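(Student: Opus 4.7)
My plan is to split the verification into two parts: first, check that $(G/H,\, V/W)$ equipped with the induced data satisfies the three defining axioms of a Harish-Chandra pair; second, identify this pair with the one assigned by Theorem \ref{thm:equivalence} to the quotient supergroup $\mathbb{G}/\mathbb{H}$.

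For the first part, the conditions (i)--(iv) of Lemma \ref{lem:normal} are tailored so that every piece of structure descends cleanly. Condition (i) produces the quotient algebraic group $G/H$; conditions (ii) and (iii) together say exactly that the $G$-module structure on $V$ is $H$-trivial on $V/W$, so $V/W$ inherits a natural $G/H$-module structure. Condition (iv) ensures that the bracket factors through a bilinear map $V/W \times V/W \to \mathrm{Lie}(G)/\mathrm{Lie}(H)$. Since $H \lhd G$, the kernel of $\mathrm{Lie}(G)\to \mathrm{Lie}(G/H)$ equals $\mathrm{Lie}(H)$, giving the embedding $\mathrm{Lie}(G)/\mathrm{Lie}(H)\hookrightarrow \mathrm{Lie}(G/H)$ appearing in the statement, and the induced bracket is easily seen to be $G/H$-equivariant. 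The axioms $[\bar v,\bar v']=[\bar v',\bar v]$ and $\bar v \triangleleft [\bar v,\bar v]=0$ descend verbatim from their counterparts for $(G,V)$.

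For the second part, I would apply the functor $\mathsf{ASG}\to \mathsf{HCP}$ of Theorem \ref{thm:equivalence} directly to $\mathbb{G}/\mathbb{H}$. By \cite[Theorem 5.13~(3)]{mas2}, the functor $(-)_{ev}$ preserves short exact sequences, whence $(\mathbb{G}/\mathbb{H})_{ev}=G/H$. For the odd Lie component, the tensor-product splittings $K[\mathbb{G}]\cong K[G]\otimes \wedge(V)$ and $K[\mathbb{H}]\cong K[H]\otimes \wedge(W)$ supplied by \eqref{eq:tensor_decomposition}, together with the description of $K[\mathbb{G}/\mathbb{H}]$ as the Hopf super-subalgebra $B\subset K[\mathbb{G}]$ with $K[\mathbb{G}]/B^{+}K[\mathbb{G}]=K[\mathbb{H}]$, force an identification $K[\mathbb{G}/\mathbb{H}]\cong K[G/H]\otimes \wedge(V/W)$; pairing with primitives then yields $\mathrm{Lie}(\mathbb{G}/\mathbb{H})_{1}\cong V/W$ as $G/H$-modules. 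The bracket $\mathrm{Lie}(\mathbb{G}/\mathbb{H})_{1}\times \mathrm{Lie}(\mathbb{G}/\mathbb{H})_{1}\to \mathrm{Lie}(\mathbb{G}/\mathbb{H})_{0}$ is the restriction of the Lie superbracket on $\mathrm{Lie}(\mathbb{G}/\mathbb{H})$, and under the identifications just made this is plainly the bracket induced from $[\, ,\,]_{G,V}$ modulo $\mathrm{Lie}(H)$, matching the one constructed in step one.

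The main obstacle will be this last compatibility check, and more generally keeping the various descent calculations consistent with the quasi-inverse of \cite[Section 4.6]{mas3}. An alternative, more abstract route is available once step one is in place: the equivalence of Theorem \ref{thm:equivalence} preserves cokernels, and Lemma \ref{lem:normal} has already matched the normal super-subgroup $\mathbb{H}$ with the sub-pair $(H,W)$ satisfying (i)--(iv), so the pair assigned to $\mathbb{G}/\mathbb{H}$ must coincide with the quotient $(G,V)/(H,W)$ computed internally to $\mathsf{HCP}$, which is exactly $(G/H,\,V/W)$ with the induced bracket.
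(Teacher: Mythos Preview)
The paper itself gives no proof of this lemma: both Lemma~\ref{lem:normal} and Lemma~\ref{lem:quotientHCP} are explicitly reproduced from \cite[Proposition~34]{mas3}, so there is nothing to compare your argument against beyond that citation. Your outline is a reasonable reconstruction of what that proposition encodes, and your first part---verifying that $(G/H,V/W)$ satisfies the Harish-Chandra pair axioms---is correct and essentially forced by conditions (i)--(iv).

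The second part, however, has two soft spots. In your direct approach, the sentence ``force an identification $K[\mathbb{G}/\mathbb{H}]\cong K[G/H]\otimes\wedge(V/W)$'' is not a proof: the tensor decomposition \eqref{eq:tensor_decomposition} applied to $\mathbb{G}/\mathbb{H}$ gives $K[(\mathbb{G}/\mathbb{H})_{ev}]\otimes\wedge(\mathrm{Lie}(\mathbb{G}/\mathbb{H})_1)$, and what you actually need is the identification $\mathrm{Lie}(\mathbb{G}/\mathbb{H})_1\cong V/W$, not the superalgebra form. A cleaner way to obtain this is to note that the quotient morphism $\mathbb{G}\to\mathbb{G}/\mathbb{H}$ is faithfully flat \cite{mas2}, so that $\mathrm{Lie}$ applied to $1\to\mathbb{H}\to\mathbb{G}\to\mathbb{G}/\mathbb{H}\to 1$ is short exact; taking odd components yields $\mathrm{Lie}(\mathbb{G}/\mathbb{H})_1=V/W$ directly, and naturality of the canonical pairing then matches the $G/H$-action and the bracket. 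In your abstract approach, saying ``the equivalence preserves cokernels'' is formally true, but you have not shown that $(G/H,V/W)$ \emph{is} the cokernel of $(H,W)\hookrightarrow(G,V)$ in $\mathsf{HCP}$; neither $\mathsf{ASG}$ nor $\mathsf{HCP}$ is abelian, and the relevant universal property (coequalizer of the inclusion and the trivial map) still needs to be checked on the $\mathsf{HCP}$ side. That verification is not hard, but it is the content, and it is exactly what \cite[Proposition~34]{mas3} supplies.
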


\subsection{}\label{subsec:characterize_super-diagonalizable}
To show a first application of Harish-Chandra pairs, we let 
$\mathbb{G}$ be an algebraic supergroup with the corresponding Harish-Chandra pair $(G,V)$. 
Let $\lambda : V \to K[G]\otimes V$ denote the left $K[G]$-comodule structure which gives the right
$G$-module structure on $V$. 

Given a normal closed subgroup $H$ of $G$, let $V_H$ denote the smallest
$G$-submodule of $V$ such that $V/V_H$ is trivial as an $H$-module. In the dual $G$-module $V^*$,
the dual vector space $(V/V_H)^*$ is the pullback
of $K[G/H] \otimes V^*$ along the dual $K[G]$-comodule structure $\lambda^* : V^* \to K[G]\otimes V^*$.
If $H$ is connected, then we have $V_H=\mathrm{Dist}(H)^+\, V$, where $\mathrm{Dist}(H)^+$ denotes
the augmentation ideal of the distribution algebra $\mathrm{Dist}(H)$ of $H$. 

\begin{lemma}\label{lem:trivial_unipotent_radical}
$\mathbb{G}_u$ is trivial, $\mathbb{G}_u=1$, if and only if
\begin{itemize}
\item[(i)] for every non-trivial, unipotent normal closed subgroup $H$ of $G$, 
$[V,V_H]$ is not included in $\mathrm{Lie}(H)$, and 
\item[(ii)] there exists no non-zero $G$-submodule $W$ of $V$ such that $[V,W]=0$. 
\end{itemize}
\end{lemma}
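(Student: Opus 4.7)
The plan is to read the condition $\mathbb{G}_u = 1$ through the equivalence $\mathsf{ASG} \simeq \mathsf{HCP}$ of Theorem \ref{thm:equivalence}, combined with Proposition \ref{prop:unipotency} and the characterization of normal sub-pairs in Lemma \ref{lem:normal}. The key observation is that a normal closed unipotent super-subgroup $\mathbb{N} \subset \mathbb{G}$ corresponds precisely to a sub-pair $(H, W)$ of $(G, V)$ in which $H = \mathbb{N}_{ev}$ is a unipotent normal closed subgroup of $G$, $W \subset V$ is a $G$-stable subspace with $H$ acting trivially on $V/W$, and $[V, W] \subset \mathrm{Lie}(H)$ (the remaining condition $[W, W] \subset \mathrm{Lie}(H)$ is then automatic since $W \subset V$). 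By Lemma \ref{lem:unipotent_radical}, $\mathbb{G}_u = 1$ is therefore equivalent to the statement that the only such pair is $(1, 0)$.

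For the forward implication, I would argue the contrapositive in both cases. If (i) fails, there is a non-trivial unipotent $H \lhd G$ with $[V, V_H] \subset \mathrm{Lie}(H)$; taking $W = V_H$, the very definition of $V_H$ ensures it is $G$-stable and that $H$ acts trivially on $V/V_H$, so $(H, V_H)$ is a non-trivial normal unipotent sub-pair, whence $\mathbb{G}_u \ne 1$. If (ii) fails, an offending non-zero $G$-submodule $W \subset V$ with $[V, W] = 0$ furnishes the non-trivial sub-pair $(1, W)$, giving the same conclusion.

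For the reverse implication, I would show that any normal unipotent sub-pair $(H, W)$ is forced to be trivial under (i) and (ii). If $H \ne 1$, the trivial $H$-action on $V/W$ combined with the minimality of $V_H$ gives $V_H \subset W$, hence $[V, V_H] \subset [V, W] \subset \mathrm{Lie}(H)$, contradicting (i); therefore $H = 1$, and then $[V, W] \subset \mathrm{Lie}(1) = 0$ together with (ii) forces $W = 0$. I do not expect a real obstacle: once the HCP dictionary is set up, the argument reduces to the two cases $H = 1$ and $H \ne 1$, with $V_H$ serving as the canonical smallest admissible $W$-component in the latter case.
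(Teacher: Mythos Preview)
Your proof is correct and follows exactly the approach the paper takes: translating the condition $\mathbb{G}_u=1$ via Proposition~\ref{prop:unipotency} and Lemma~\ref{lem:normal} into the non-existence of non-trivial normal unipotent sub-pairs $(H,W)$, then splitting into the cases $H\ne 1$ and $H=1$, with $V_H$ serving as the minimal admissible $W$ in the former. The paper's proof compresses this into a single sentence; you have simply written out the details it leaves implicit.
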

\begin{proof}
As is seen from Proposition \ref{prop:unipotency} and Lemma \ref{lem:normal},
Condition (i) (resp., (ii)) is equivalent to 
saying that $(G,V)$ does not include a sub-object $(H, W)$ with $H \ne 1$ (resp., $H=1$) 
which gives rise to a non-trivial, unipotent normal closed super-subgroup of $\mathbb{G}$. 
\end{proof}

Condition (i) is superfluous if the unipotent radical $G_u$ of $G$ is trivial. This is the case
if $G$ is diagonalizable, as will be assumed below.

Assume that $\mathbb{G}$ is even-diagonalizable, or $G$ is diagonalizable. Then $K[G]$ 
is spanned by the character group $X=X(G)$ of $G$, and we have
\[ V = \bigoplus_{g\in X} V(g),\]
where $V(g) = \{ v \in V\mid \lambda(v)=g \otimes v\}$. Since $G$ acts trivially on $\mathrm{Lie}(G)$,
it follows that $[V(g), V(h)]=0$ unless $gh=1$. 
Lemma \ref{lem:trivial_unipotent_radical} now implies the following.  

\begin{prop}\label{prop:characterize_super-diagonalizable}
$\mathbb{G}$ is super-diagonalizable, if and only if the bracket $[\ , \ ] : V \times V \to \mathrm{Lie}(G)$
is non-degenerate, if and only if for every $g\in X$ such that $V(g) \ne 0$, 
the restriction $[\ , \ ]|_{V(g) \times V(g^{-1})}$ is non-degenerate. 
\end{prop}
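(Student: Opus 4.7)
The plan is to deduce the three-way equivalence from Lemma \ref{lem:trivial_unipotent_radical} combined with the character decomposition. The main observation is that since $G$ is diagonalizable, $G_u=1$, so $G$ admits no non-trivial unipotent normal closed subgroup. Consequently condition (i) of Lemma \ref{lem:trivial_unipotent_radical} is vacuous, and $\mathbb{G}_u=1$ is equivalent to condition (ii) alone, namely: there is no non-zero $G$-submodule $W\subseteq V$ with $[V,W]=0$.

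Next I would exploit the weight decomposition. Since $G$ is diagonalizable, $V=\bigoplus_{g\in X}V(g)$, and every $G$-submodule $W$ of $V$ splits as $W=\bigoplus_g W(g)$ with $W(g)\subseteq V(g)$. Using the fact, noted in the text just before the proposition, that $[V(g),V(h)]=0$ unless $gh=1$, condition (ii) says exactly that for no $g\in X$ there is a non-zero subspace $W(g)\subseteq V(g)$ with $[V(g^{-1}),W(g)]=0$. Equivalently, for every $g\in X$ with $V(g)\ne 0$, the bilinear map $[\ ,\ ]\colon V(g)\times V(g^{-1})\to \mathrm{Lie}(G)$ has trivial left kernel, and by the symmetry $[v,v']=[v',v]$ of Definition \ref{def:HCP}(i) (applied with the roles of $g$ and $g^{-1}$ interchanged), equivalently has trivial right kernel as well. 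This gives the equivalence between $\mathbb{G}$ being super-diagonalizable and condition (c).

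Finally I would verify the equivalence of (b) and (c) by a direct grading argument. The full bracket $[\ ,\ ]\colon V\times V\to \mathrm{Lie}(G)$ is non-degenerate iff for every non-zero $v\in V$ there is some $v'\in V$ with $[v,v']\ne 0$. Writing $v=\sum_g v_g$ with $v_g\in V(g)$, the vanishing relation $[V(g),V(h)]=0$ for $gh\ne 1$ forces the components $[v_g,v']$ indexed by different weights to live in (direct summands of) distinct graded pieces; hence $[v,V]=0$ is equivalent to $[v_g,V(g^{-1})]=0$ for every $g$. This shows that non-degeneracy of the full bracket is the conjunction, over all $g\in X$ with $V(g)\ne 0$, of non-degeneracy of the restrictions $[\ ,\ ]|_{V(g)\times V(g^{-1})}$, giving (b)$\Leftrightarrow$(c).

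The argument is essentially bookkeeping on the character decomposition; the only delicate point is interpreting ``non-degenerate'' on $V(g)\times V(g^{-1})$ when $g\ne g^{-1}$, for which the symmetry of the bracket ensures that left and right non-degeneracy (across the pair $g,\, g^{-1}$) are the same condition. I do not anticipate a substantial obstacle.
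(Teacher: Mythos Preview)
Your approach is correct and is precisely what the paper intends: the proposition is asserted as an immediate consequence of Lemma~\ref{lem:trivial_unipotent_radical} (with condition~(i) vacuous because $G$ is diagonalizable), and you have faithfully unpacked the bookkeeping through the weight decomposition. One small correction in your third paragraph: the brackets $[v_g,v'_{g^{-1}}]$ for varying $g$ all land in the \emph{same} piece $\mathrm{Lie}(G)$ (on which the abelian $G$ acts trivially), not in distinct graded pieces; the equivalence $[v,V]=0 \Leftrightarrow [v_g,V(g^{-1})]=0$ for every $g$ follows instead by testing against $v'$ supported in a single $V(h)$, which isolates the term $[v_{h^{-1}},v']$.
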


\subsection{}\label{subsec:abelian_supergroup}
Let $\mathsf{AbelASG}$ denote the full subcategory of $\mathsf{ASG}$ which consists of all abelian
algebraic supergroups. Let $\mathsf{AbelAG}$ denote the category of abelian algebraic groups, and
$\mathsf{Vec}_K$ the category of finite-dimensional vector spaces.

Given $V \in \mathsf{Vec}_K$, regard the exterior algebra $\wedge(V^*)$ on the dual vector space $V^*$
of $V$ as a Hopf superalgebra in which every element in $V^*$ is an odd primitive, and let $(\mathsf{G}_a^-)^V$
denote the corresponding abelian algebraic supergroup. This last is isomorphic to the product
$(\mathsf{G}_a^-)^{\dim V}$ of $\dim V$ copies of $\mathsf{G}_a^-$. 

\begin{prop}\label{prop:abelian_supergroup}
There is a category equivalence 
\[ \mathsf{AbelASG} \approx \mathsf{AbelAG}\times \mathsf{Vec}_K \]
between $\mathsf{AbelASG}$ and the product $\mathsf{AbelAG}\times \mathsf{Vec}_K$,
which assigns $G \times (\mathsf{G}_a^-)^V$ to each object $(G,V)$ in the product category.  
\end{prop}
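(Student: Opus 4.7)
My plan is to use the Harish-Chandra pair equivalence of Theorem \ref{thm:equivalence} and identify which pairs $(G,V)$ correspond to abelian algebraic supergroups.

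\textbf{Step 1: characterize abelian HC pairs.} I claim $\G$ is abelian if and only if the associated pair $(G, V)$ satisfies
(a) $G$ is abelian,
(b) the right $G$-action on $V$ is trivial, and
(c) the bracket $[\,,\,] : V \times V \to \Lie(G)$ vanishes.
For the forward direction, $G = \G_{ev}$ is a closed super-subgroup of the abelian $\G$, so (a) holds; the $G$-action on $V$ given by \eqref{eq:analogous_adjoint} is induced from the adjoint action of $\G$ on itself, which is trivial, giving (b); and $\G$ being abelian is equivalent to $K[\G]$ being super-cocommutative, which forces $\mathrm{Dist}(\G)$ to be super-commutative, so that the super-bracket on $\Lie(\G)$ (the super-commutator on primitives inside $\mathrm{Dist}(\G)$) vanishes on all of $\Lie(\G)$, giving (c). The reverse direction will be obtained from Step 2.

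\textbf{Step 2: the functor.} Define
\[ F : \mathsf{AbelAG} \times \mathsf{Vec}_K \to \mathsf{AbelASG}, \quad (G, V) \mapsto G \times (\mathsf{G}_a^-)^V. \]
The target is abelian: $G$ is regarded as a purely even supergroup, and $(\mathsf{G}_a^-)^V$ is represented by $\wedge(V^*)$ with every element of $V^*$ an odd primitive, hence super-cocommutative. Under Theorem \ref{thm:equivalence}, the HC pair of $G$ is $(G, 0)$ and that of $(\mathsf{G}_a^-)^V$ is $(1, V)$ with trivial action and zero bracket; since the HC pair construction preserves products, the pair of $G \times (\mathsf{G}_a^-)^V$ is $(G, V)$ satisfying (a)--(c). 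Combined with Step 1, every abelian $\G$ has an HC pair in this image, hence is isomorphic to some $F(G, V)$ by Theorem \ref{thm:equivalence}; thus $F$ is essentially surjective.

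\textbf{Step 3: full faithfulness and the main obstacle.} Within the full subcategory of $\mathsf{HCP}$ cut out by (a)--(c), conditions (a) and (b) on HC pair morphisms (Definition \ref{def:HCP}) become automatic (equivariance is vacuous when both $G$-actions are trivial, and the bracket compatibility is vacuous when both brackets vanish). A morphism $(G, V) \to (G', V')$ therefore reduces to a pair $(\phi, \psi)$ consisting of an algebraic group homomorphism $\phi : G \to G'$ and a linear map $\psi : V \to V'$, which are precisely the morphisms of $\mathsf{AbelAG}\times \mathsf{Vec}_K$. The main obstacle is the forward direction of Step 1, specifically extracting the vanishing of $[V,V]$ from the abelianness of $\G$; the distribution-algebra argument sketched above appears the cleanest, but one may alternatively verify it by computing the commutator morphism $\G \times \G \to \G$ on $R$-points with odd entries and reading off the super-bracket from the second-order terms.
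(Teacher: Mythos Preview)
Your proposal is correct and follows exactly the route the paper indicates: the paper's ``proof'' is merely the sentence that the result is an easy consequence of Theorem~\ref{thm:equivalence} (or alternatively a direct consequence of \cite[Theorem~3.16]{mas2}), and what you have written is a careful fleshing-out of that deduction via Harish--Chandra pairs. Your identification of the abelian pairs as those with trivial $G$-action and zero bracket, and the verification that $G\times(\mathsf{G}_a^-)^V$ realizes precisely such a pair, are the expected steps.
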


This is an easy consequence of Theorem \ref{thm:equivalence}, or a direct consequence of
\cite[Theorem 3.16]{mas2}. 

\section{The algebraic supergroups $G_{g,x}$}\label{sec:Ggx}

Let $G$ be an algebraic group. Let $g \in K[G]$ and $x \in \mathrm{Lie} (G)$ such that
\begin{itemize}
\item[(i)] $g$ is grouplike, and
\item[(ii)]
the right $G$-adjoint action on $x$ arises from $x \mapsto g^2 \otimes x$. 
\end{itemize}
The last condition is equivalent to saying that
\[ x^{\gamma} = x \otimes \gamma(g)^2\ \, \text{in}\ \, V \otimes R,\quad \gamma \in G(R), \]
where $R$ is an arbitrary commutative algebra. 

\begin{rem}\label{rem:abelian_case}
Suppose $x \ne 0$. If the $G$-action on $x$ is trivial, then Condition (ii) implies $g^2=1$. 
This is the case if $G$ is abelian.
\end{rem}

Suppose that $G$, $g$ and $x$ are as before Remark \ref{rem:abelian_case}. 

\begin{lemmadef}\label{lemdef:Ggx}
Let $V=Kv$ be the one-dimensional right $G$-module defined by the
left $K[G]$-comodule structure $v \mapsto g \otimes v$. Then $(G, V)$, given the bracket
determined by $[v,v]=2x$, is a Harish-Chandra pair.

\emph{We let} $G_{g,x}$ \emph{denote the corresponding algebraic supergroup.}
\end{lemmadef}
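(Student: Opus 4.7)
The plan is to verify the three requirements of Definition~\ref{def:HCP} for the bilinear map $[\,,\,]\colon V \times V \to \mathrm{Lie}(G)$ determined by $[v,v]=2x$. Since $V=Kv$ is one-dimensional, bilinearity extends this uniquely and symmetry~(i) is automatic, so only $G$-equivariance and item~(ii) remain. For $G$-equivariance, the approach is to compute both sides directly: the left $K[G]$-comodule structure $v \mapsto g \otimes v$ yields $v^\gamma = \gamma(g)\, v$ and hence $[v^\gamma, v^\gamma] = 2\gamma(g)^2\, x$, while hypothesis~(ii) of the lemma-definition gives $[v,v]^\gamma = 2 x^\gamma = 2\gamma(g)^2\, x$ (using that $g$ is grouplike and $\gamma$ is an algebra map, so $\gamma(g^2) = \gamma(g)^2$ automatically). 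These agree.

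The essential step is verifying $v \triangleleft [v,v] = 0$. Unwinding via $\lambda(v) = g \otimes v$, this amounts to $2\langle x, g\rangle\, v = 0$; since $\mathrm{char}\, K \ne 2$, it reduces to the claim $\langle x, g\rangle = 0$. If $x=0$ this is trivial, so assume $x \ne 0$. The plan is an infinitesimal test: set $R := K[t]/(t^2)$ and take $\gamma := \varepsilon + tx \in G(R)$, which is a well-defined algebra map precisely because $x$ is an $\varepsilon$-derivation of $K[G]$. Hypothesis~(ii) then gives $x^\gamma = \gamma(g)^2\, x = (1 + 2t\langle x, g\rangle)\, x$, while expanding the defining formula~\eqref{eq:adjoint} of the adjoint action to first order in $t$ and using $[x,x] = 0$ in $\mathrm{Lie}(G)$ yields $x^\gamma = x$ (modulo $t^2$). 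Matching the coefficients of $t$ in $\mathrm{Lie}(G) \otimes R$ forces $2\langle x, g\rangle\, x = 0$, and hence $\langle x, g\rangle = 0$.

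The main obstacle is precisely this last step: condition~(ii) of the lemma-definition silently encodes the vanishing $\langle x, g\rangle = 0$, and extracting it requires testing against a suitably chosen infinitesimal $R$-point together with the Lie-algebra identity $[x,x]=0$. All other verifications are routine unwindings of the definitions.
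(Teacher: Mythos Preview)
Your proof is correct and follows essentially the same route as the paper's: both reduce condition~(ii) of Definition~\ref{def:HCP} to the vanishing $\langle x, g\rangle = 0$, and both obtain this by differentiating hypothesis~(ii) on the adjoint action and invoking $[x,x]=0$. The only difference is presentational: the paper states directly that the $G$-adjoint action on $\mathrm{Lie}(G)$ induces the Lie bracket, yielding $[x,y]=2\langle y,g\rangle\, x$ and hence $0=[x,x]=2\langle x,g\rangle\, x$, whereas you unfold this same fact explicitly via the dual-number point $\gamma=\varepsilon+tx\in G(K[t]/(t^2))$.
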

\begin{proof}
Condition (ii) above ensures that $[\ , \ ] : V \times V \to \mathrm{Lie}(G),\ [v,v]=2x$ is $G$-equivariant. 
It remains to prove that $v \triangleleft [v,v]=0$, or equivalently
\begin{equation}\label{eq:xg0}
\langle x, g \rangle = 0. 
\end{equation}
Since the right $G$-adjoint action on $\mathrm{Lie}(G)$ induces the right 
$\mathrm{Lie}(G)$-adjoint action, we have
\[ [x, y] = \langle y, g^2\rangle \, x = 2\langle y, g\rangle \, x,\quad y \in \mathrm{Lie}(G). \]
This, applied to $y=x$, proves \eqref{eq:xg0} since $[x,x]=0$. 
\end{proof}

\begin{prop}\label{prop:Ggx}
Let $G_{g,x}$ be an algebraic supergroup constructed above.
\begin{itemize} 
\item[(1)] Set $B=K[G]$. 
Then the Hopf superalgebra $K[G_{g,x}]$ is the tensor product $B \otimes \wedge(z)$
of $B$ with the
exterior algebra $\wedge(z)=K[z]/(z^2)$ generated by one odd element $z\ne 0$. The structure maps
$\Delta$, $\varepsilon$ and $\mathcal{S}$ are determined by
\begin{gather*}
\Delta(b)=\Delta_B(b) +\langle x, b_{(2)} \rangle \, \Delta_B(b_{(1)})(z\otimes gz),\\
\Delta(z)=1\otimes z +z\otimes g,\\
\varepsilon(b)=\varepsilon_B(b),\quad \varepsilon(z)=0,\\
\mathcal{S}(b)=\mathcal{S}_B(b),\quad \mathcal{S}(z)=-g^{-1}z,
\end{gather*}
where $b \in B$, $\Delta_B(b)=b_{(1)}\otimes b_{(2)}$, and $\langle x,b \rangle$
denotes the pairing \eqref{eq:pairing}. 
In particular, $g$ remains grouplike in $K[G_{g,x}]$, and $z$ is 
an odd skew-primitive in $K[G_{g,x}]$. 
\item[(2)] $G_{g,x}$ is an algebraic supergroup $\mathbb{G}$ such that 
\[ \mathbb{G}_{ev}=G,\quad \dim\mathrm{Lie}(\mathbb{G})_1=1. \]
Conversely, every algebraic supergroup with this property is isomorphic to $G_{g,x}$ for some
$g$ and $x$. 
\end{itemize}
\end{prop}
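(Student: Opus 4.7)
The plan is to realize $G_{g,x}$ directly via its Harish-Chandra pair $(G,V)$ from Lemma-Definition \ref{lemdef:Ggx}, and then unpack the quasi-inverse of the equivalence $\mathsf{ASG}\approx \mathsf{HCP}$ of Theorem \ref{thm:equivalence} in the present one-dimensional $V$ setting. Part (1) is a concrete description of $K[G_{g,x}]$ via this quasi-inverse, while part (2) is nearly tautological once the equivalence has been applied and one recognizes that one-dimensional right $G$-modules correspond to grouplikes.

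For part (1), the superalgebra identification $K[G_{g,x}] = B \otimes \wedge(z)$ comes from the splitting \eqref{eq:tensor_decomposition} applied to $V=Kv$, where $z$ is the odd generator dual to $v$; the same splitting also fixes the counit formulas. The formula $\Delta(z)=1\otimes z + z\otimes g$ is forced by the requirement that $z$ be an odd skew-primitive whose associated coaction recovers the $G$-module structure $v\mapsto g\otimes v$ through the pairing \eqref{eq:pairing}. The subtle piece is $\Delta(b) = \Delta_B(b) + \langle x, b_{(2)}\rangle\, \Delta_B(b_{(1)})(z\otimes gz)$ for $b\in B$: the correction term is a cocycle-type deformation arising from the reconstruction in \cite[Section 4.6]{mas3} to record the bracket $[v,v]=2x$. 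I would extract it by writing $\Delta(b)$ in the general form $\Delta_B(b)+(\text{odd-degree terms in the }\wedge(z)\text{-grading})$, then using that reapplying the functor of Theorem \ref{thm:equivalence} must return precisely $[v,v]=2x$ in $\mathrm{Lie}(G)$, so that the pairing $\langle x,-\rangle$ on $B$ is the only natural recipient of this information. The antipode formulas are then dictated by the Hopf antipode axiom, and one verifies directly that $B\otimes \wedge(z)$ with these structure maps is a super-commutative Hopf superalgebra.

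For part (2), the forward direction is immediate: \eqref{eq:tensor_decomposition} gives $K[G_{g,x}]/(K[G_{g,x}]_1)=B=K[G]$, so $(G_{g,x})_{ev}=G$, and $\dim\mathrm{Lie}(G_{g,x})_1 = \dim V = 1$. Conversely, given $\mathbb{G}$ with $\mathbb{G}_{ev}=G$ and $\dim\mathrm{Lie}(\mathbb{G})_1=1$, Theorem \ref{thm:equivalence} supplies a Harish-Chandra pair $(G,V)$ with $\dim V=1$. A choice of basis $v\in V$ exhibits the one-dimensional right $G$-module structure as a character, yielding a unique grouplike $g\in K[G]$ with $v\mapsto g\otimes v$, and the bracket yields $[v,v]=2x$ for a unique $x\in\mathrm{Lie}(G)$. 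The $G$-equivariance requirement in Definition \ref{def:HCP} then translates exactly into condition (ii) preceding Remark \ref{rem:abelian_case}, so $(G,g,x)$ are admissible data and $\mathbb{G}\simeq G_{g,x}$ under the equivalence.

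The main obstacle I expect is pinning down the explicit form of $\Delta(b)$ for $b\in B$ in part (1), since this is the only place where the Harish-Chandra bracket enters nontrivially and where one genuinely has to dig into the quasi-inverse construction of \cite[Section 4.6]{mas3}. The remaining verifications are either direct consequences of the tensor splitting \eqref{eq:tensor_decomposition} or routine Hopf-algebraic manipulations once $\Delta$ has been written down.
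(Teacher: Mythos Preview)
Your outline for part (2) matches the paper's argument essentially verbatim, and your identification of the superalgebra splitting, the counit, and the skew-primitive form of $\Delta(z)$ in part (1) is correct and in line with the paper. The divergence is in how the formula for $\Delta(b)$ is actually obtained.

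You propose to write $\Delta(b)$ in a generic form and then pin down the odd-degree correction by requiring that reapplying the functor of Theorem~\ref{thm:equivalence} return $[v,v]=2x$. That is not yet a derivation: knowing which Harish-Chandra pair must come out does not by itself hand you the coproduct on each $b\in B$, and one would still have to argue uniqueness of the correction term among all super-coalgebra structures compatible with the algebra structure and with $\Delta(z)$. The paper instead makes the quasi-inverse of \cite[Section~4.6]{mas3} fully explicit via dualization: one forms the dual Hopf algebra $J=B^{\circ}$, builds on $H=J\otimes\wedge(v)$ a Hopf superalgebra from the dual Harish-Chandra pair $(J,V)$ using \cite[Theorem~10, Lemma~11]{mas3} (the product on $H$ is written down in closed form), extends the canonical pairing $J\times B\to K$ to a Hopf pairing $H\times A\to K$, and then reads off $\Delta(b)$ and $\Delta(z)$ by short computations with this pairing, using that the induced map $A\to H^{\circ}$ is injective. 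This is what makes the factor $\langle x,b_{(2)}\rangle\,\Delta_B(b_{(1)})(z\otimes gz)$ fall out rather than be guessed. Your plan becomes a proof once you either carry out this dualization or, alternatively, verify directly that the displayed formulas define a Hopf superalgebra and that its associated pair is $(G,Kv)$ with $[v,v]=2x$; the constrain-and-solve sketch as written does not yet accomplish either.
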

\begin{proof}
(1)\ 
Let $J = B^{\circ}$ be the dual Hopf algebra \cite[Page 122]{sw} of $B$. Let
\begin{equation}\label{eq:extended_pairing}
\langle \ , \ \rangle : J \times B \to K 
\end{equation}
denote the canonical pairing. 
This extends the pairing \eqref{eq:pairing} since $\mathrm{Lie}(G) = P(J)$. 
Note that $V=Kv$ is a right $J$-module, naturally defined by
$v \triangleleft s = \langle s, g \rangle \, v$, where $s \in J$.  
The pair $(J, V)$, given the bracket above, forms a dual Harish-Chandra pair \cite[Definition 6]{mas3}. 
Let $H=J\otimes \wedge(v)$ denote the super-coalgebra defined as the tensor product
of the coalgebra $J$ with $\wedge(v)=K1\oplus Kv$; in this last super-coalgebra,
$1$ is supposed to be grouplike, and $v$ an odd primitive. By \cite[Theorem 10 (1), Lemma 11]{mas3}
the dual Harish-Chandra pair constructs on $H$
a Hopf superalgebra, with respect to the product defined by
\[ 
(s \otimes v^i)(t \otimes v^j) = 
\begin{cases}
st \otimes v^j & i=0, \\
\langle t_{(2)},g \rangle \, st_{(1)} \otimes v  & i=1, j=0,\\
\langle t_{(2)},g \rangle \, st_{(1)}x \otimes 1 & i=j=1,
\end{cases}
\]
where $s, t \in J$, and $\Delta_J(t)=t_{(1)}\otimes t_{(2)}$; the unit is $1\otimes 1$. 
We remark that in $H$, $[v,v]$ coincides with $2x$, or more precisely,
$[1\otimes v, 1 \otimes v]=2(1\otimes v)^2$ coincides with $2(x\otimes 1)$.

Set $A = K[G_{g,x}]$. By Remark \ref{rem:quasi-inverse},  
$A = B \otimes \wedge(z) \, (= B \oplus Bz)$ as a superalgebra.
Extend the pairing \eqref{eq:extended_pairing} to $\langle \ , \ \rangle : H \times A \to K$ so that
\[ 
\langle s \otimes v^i, b \otimes z^j\rangle = \delta_{ij} \langle s,b\rangle,
\quad s \in J,\ b \in B,\ i, j \in \{ 0,1 \},
\]
By \cite[Proposition 28 (2)]{mas3} the extended one is
a Hopf pairing, or in other words, there is induced a Hopf-superalgebra map $A \to H^{\circ}$; 
see \cite[Remark 1]{mas3}.
The last map is injective since the natural map $B \to J^*$ is injective by \cite[Theorem 6.1.3]{sw}. 
Therefore, we can dualize
the structures on $H$, to obtain the structures on $A$. For example, given $b \in B$, we have
$\Delta(b) \in (B \otimes B) \oplus (B\otimes B)(z\otimes z)$. 
The component in $(B\otimes B)(z\otimes z)$
is seen to be as stated above, from the computation
\begin{align*}
\langle (s\otimes v)(t\otimes v),\Delta(b)\rangle 
&=\langle t_{(2)},g \rangle \, \langle st_{(1)}, b_{(1)}\rangle\, \langle x, b_{(2)} \rangle \\
&=\langle s, b_{(1)}\rangle\, \langle t, b_{(2)}g \rangle\, \langle x, b_{(3)} \rangle\\
&=\langle s \otimes v, b_{(1)}\otimes z\rangle\, \langle t \otimes v, b_{(2)}g\otimes z\rangle
\, \langle x, b_{(3)} \rangle. 
\end{align*}
For $\Delta(z)$, note that $\Delta(z) \in (B\otimes Bz) \oplus (Bz \otimes B)$. 
The component in $Bz \otimes B$ is seen to be as stated above, from the computation
\begin{align*}
\langle (s\otimes v)(t\otimes 1),\Delta(z)\rangle 
&=\langle t_{(2)},g \rangle \, \langle st_{(1)}, 1 \rangle 
=\langle t, g \rangle\, \langle s, 1 \rangle \\
&=\langle s \otimes v, 1 \otimes z\rangle\, \langle t \otimes 1, g\otimes 1 \rangle. 
\end{align*}
The counit is easy to see.
For the antipode note that the equations above define a superalgebra endomorphism on $A$.
Then one sees that it indeed satisfies the axiom of antipodes. 

We see from \eqref{eq:xg0} that $g$ is grouplike in $A$. Obviously, $z$ is skew-primitive. 

(2)\ The algebraic supergroups with the prescribed property correspond precisely to the Harish-Chandra pairs
$(G, V)$ with $\dim V =1$. This implies the first half. Suppose that $Kv$ is a one-dimensional right $G$-module, 
whose structure is given uniquely by a grouplike, say $g$, in $K[G]$. 
If $(G, Kv)$ is a Harish-Chandra pair, then the $g$ and $x =\frac{1}{2}[v,v]$ must satisfy
(ii), so that the pair must be as above. This proves the second half. 
\end{proof}

\begin{prop}\label{prop:classify_Ggx}
Let $(G_i)_{g_i,x_i}$, $i=1,2$, be algebraic supergroups constructed as above.  
An isomorphism $(G_1)_{g_1,x_1} \overset{\simeq}{\longrightarrow} (G_2)_{g_2,x_2}$ arises uniquely from
an isomorphism $\phi : G_1 \overset{\simeq}{\longrightarrow} G_2$ and an element $\alpha \in K \setminus 0$,
such that $K[\phi] : K[G_2] \to
K[G_1]$ sends $g_2$ to $g_1$, and $\mathrm{Lie}(\phi)(x_1) = \alpha^2 x_2$.  
\end{prop}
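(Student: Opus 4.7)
The plan is to apply the category equivalence $\mathsf{ASG}\to\mathsf{HCP}$ of Theorem \ref{thm:equivalence} to reduce the statement to an explicit unfolding of Definition \ref{def:HCP}. By that theorem, giving an isomorphism $(G_1)_{g_1,x_1}\overset{\simeq}{\to}(G_2)_{g_2,x_2}$ of algebraic supergroups is the same as giving an isomorphism $(\phi,\psi):(G_1,Kv_1)\overset{\simeq}{\to}(G_2,Kv_2)$ of the Harish-Chandra pairs furnished by Lemma-Definition \ref{lemdef:Ggx}. Since each $Kv_i$ is one-dimensional, $\psi$ has a unique form $\psi(v_1)=\alpha v_2$ with $\alpha\in K\setminus 0$, and conversely any such $\alpha$ determines $\psi$. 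Moreover $\phi$ is determined as the $(\,\cdot\,)_{ev}$-part of the given supergroup isomorphism, while $\alpha$ is determined as $\psi$ is the restriction of the isomorphism to $\mathrm{Lie}(\,\cdot\,)_1$; this accounts for uniqueness.

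Next I would translate the two morphism axioms (a), (b) of Definition \ref{def:HCP}. For (a), the $G_1$-module structure on $Kv_1$ arises from the comodule map $v_1\mapsto g_1\otimes v_1$, while the pullback along $\phi$ of the structure on $Kv_2$ is given by $v_2\mapsto K[\phi](g_2)\otimes v_2$. Comparing $(\mathrm{id}\otimes\psi)(g_1\otimes v_1)=\alpha g_1\otimes v_2$ with $(K[\phi]\otimes\mathrm{id})(g_2\otimes\psi(v_1))=\alpha K[\phi](g_2)\otimes v_2$, equivariance of $\psi$ becomes the single relation $K[\phi](g_2)=g_1$. For (b), using the brackets $[v_i,v_i]=2x_i$ prescribed in Lemma-Definition \ref{lemdef:Ggx}, the identity $[\psi(v_1),\psi(v_1)]=\mathrm{Lie}(\phi)([v_1,v_1])$ reads $2\alpha^2 x_2=2\,\mathrm{Lie}(\phi)(x_1)$, i.e. $\mathrm{Lie}(\phi)(x_1)=\alpha^2 x_2$.

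Finally, the converse direction is automatic: given $\phi$ and $\alpha$ satisfying the two displayed relations, the pair $(\phi,\psi)$ with $\psi(v_1)=\alpha v_2$ visibly satisfies both (a) and (b), hence defines a morphism in $\mathsf{HCP}$, and is an isomorphism because $\phi$ and $\alpha$ are. Then Theorem \ref{thm:equivalence} promotes it back to an isomorphism $(G_1)_{g_1,x_1}\overset{\simeq}{\to}(G_2)_{g_2,x_2}$. I do not foresee a genuine obstacle; the only point requiring care is to keep straight the contravariance of the assignment $\phi\mapsto K[\phi]$ versus the covariance of $\phi\mapsto\mathrm{Lie}(\phi)$, which is exactly what explains why $g_2\mapsto g_1$ appears together with $x_1\mapsto\alpha^2 x_2$ in the conclusion.
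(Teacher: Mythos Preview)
Your proposal is correct and follows essentially the same approach as the paper: reduce via Theorem \ref{thm:equivalence} to an isomorphism of Harish-Chandra pairs, write $\psi$ as $v_1\mapsto\alpha v_2$, and translate conditions (a) and (b) into $K[\phi](g_2)=g_1$ and $\mathrm{Lie}(\phi)(x_1)=\alpha^2 x_2$. The paper's proof is simply a terser version of yours, omitting the explicit comodule computation you carry out for (a).
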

\begin{proof}
Every isomorphism between the algebraic supergroups arises uniquely from an isomorphism 
$(G_1, Kv_1) \overset{\simeq}{\longrightarrow} (G_2, Kv_2)$ between the corresponding Harish-Chandra pairs.
The latter is a pair $(\phi, \alpha)$ of an isomorphism $\phi : G_1 \overset{\simeq}{\longrightarrow} G_2$
and a scalar $\alpha\ne 0$ giving $v_1 \mapsto \alpha v_2$. We see that
the pair indeed gives a map of Harish-Chandra pairs if and only if $K[\phi](g_2)=g_1$ and 
$\mathrm{Lie}(\phi)(x_1) = \alpha^2 x_2$. This proves the proposition. 
\end{proof}

\begin{example}\label{ex:Ggx}
Let $G$ be an algebraic group. 

(1)\ Choose $0$ as the $x$ in $\mathrm{Lie}(G)$. Then any grouplike $g \in K[G]$ satisfies (ii). We see
that $G_{g,0}$ is the semi-direct product $G\ltimes \mathsf{G}_a^-$ with respect to the action arising from
$z \mapsto z \otimes g$. 

(2)\ Suppose $G = \mu_n$, $n \ge 1$. Since $\mathrm{Lie}(G)=0$, the possible $G_{g,x}$ are 
the $\mu_n \ltimes \mathsf{G}_a^-$ as above.

(3)\ Suppose $G = \mathsf{G}_m$ or $\mathsf{G}_a$, so that $K[G]=K[t,t^{-1}]$ or $K[t]$. 
Then $\mathrm{Lie}(G)$ is spanned by the specific element $y$ determined by
$\langle y, t\rangle = 1$. Choose a non-zero $\lambda y$, $\lambda \in K\setminus 0$, as the $x$.
Then the $g$ must be 1 by the requirement $g^2=1$ from (ii). 
Given two $G_{1,\lambda_iy}$, $i=1,2$, Proposition \ref{prop:classify_Ggx} shows the following. 

Case $G = \mathsf{G}_m$; $G_{1,\lambda_1y}\simeq G_{1,\lambda_2y}$ if and only if
$\lambda_1/\lambda_2$ or $-\lambda_1/\lambda_2$ is the square $\alpha^2$
of some $\alpha \in K\setminus 0$.  

Case $G = \mathsf{G}_a$; the two are necessarily isomorphic. If $K$ is the field
$\mathbb{R}$ of real numbers,
$(\mathsf{G}_a)_{1,y}$ coincides with the $\mathbb{R}^{1|1}$ given in \cite[Page 74]{dm}
and \cite[Page 277]{varadarajan}.

(4)\ Concerning $G_{g,x}$ we see from (2), (3) that if $g^{\pm 1}$ generate $K[G]$,  
then $G_{g,x}$ isomorphic to one of the following
\begin{equation}\label{eq:gen_by_1skewprim}
\mathsf{G}_a^-,\quad \mathsf{G}_m \ltimes \mathsf{G}_a^-,\quad \mu_n \ltimes \mathsf{G}_a^-.
\end{equation}
\end{example}

\section{Representations of $D_{g, x}$}\label{sec:Dgx}
 
In the situation of Section \ref{sec:Ggx}
we suppose that $G$ is a diagonalizable 
algebraic group $D$, and study representations of $D_{g,x}$. 
The algebraic supergroups $D_{g,x}$ are characterized as the
even-diagonalizable supergroups $\mathbb{G}$ such that $\dim\mathrm{Lie}(\mathbb{G})_1=1$. 

Let $D$ be a diagonalizable algebraic supergroup, and choose arbitrarily 
elements $g \in K[D]$ and $x \in \mathrm{Lie}(D)$ which
satisfy Conditions (i), (ii) at the beginning of Section \ref{sec:Ggx}. 
Given $M \in D_{g,x}\text{-}\mathsf{SMod}$, let $\Pi M$ denote the parity shift of $M$,
so that $(\Pi M)_i=M_{i+1}$, $i \in \mathbb{Z}_2$. 


Let $X = X(D)$ denote the character group of $D$. Then
$K[D]$ is the group algebra $KX$ on $X$, and $g \in X$.  
Recall $K[D_{g,x}] = KX \oplus KXz$,
and 
\[ \Delta(h) = h \otimes h + \langle x, h \rangle \, hz \otimes gh z, \quad 
\Delta(hz) = h \otimes hz + hz \otimes gh, \]
where $h \in X$. Let
\[ Y = \{ h \in X \mid \langle x, h \rangle = 0 \}. \]
By \eqref{eq:xg0},  $Y$ is a subgroup of $X$ containing $g$. Define in $K[D_{g,x}]$,
\[ L(h) = Kh \oplus Khz,\ h \in X;\quad S(h) = Kh,\ h\in Y. \]
These are all right $K[D_{g,x}]$-super-subcomodules, or $D_{g,x}$-super-submodules, of $K[D_{g,x}]$, 
and we have
\begin{equation}\label{eq:direct_sum}
K[D_{g,x}]= \bigoplus_{h \in X}L(h).
\end{equation} 

\begin{prop}\label{prop:Dgx_representation}
In $D_{g,x}\text{-}\mathsf{SMod}$ we have the following.
\begin{itemize}
\item[(1)] All indecomposable objects are given by
\begin{equation}\label{eq:indecomposable}
L(h),\ \, \Pi L(h), \ \, h \in X;\quad
S(h),\ \, \Pi S(h),\ \, h \in Y, 
\end{equation}
which are mutually non-isomorphic. 
\item[(2)] Among the object above the injective indecomposables are
\begin{equation}\label{eq:injective_indecomposable}
L(h),\ \, \Pi L(h), \ \, h \in X,  
\end{equation}
while the simples are
\begin{equation}\label{eq:simple}
L(h),\ \, \Pi L(h), \ \, h \in X \setminus Y;\quad
S(h),\ \, \Pi S(h),\ \, h \in Y.
\end{equation}
\end{itemize}
\end{prop}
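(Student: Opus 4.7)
The plan is to extract the structural information directly from the explicit decomposition \eqref{eq:direct_sum} and the coproduct formulas on $K[D_{g,x}]$. Since any Hopf superalgebra is injective over itself as a right super-comodule, every summand $L(h)$ in \eqref{eq:direct_sum} is injective, and applying the parity shift to $K[D_{g,x}]$ gives injectivity of each $\Pi L(h)$. Recall also that Condition (ii) together with triviality of the adjoint $D$-action on $\mathrm{Lie}(D)$ forces $g^2=1$ whenever $x\neq 0$, by Remark \ref{rem:abelian_case}; this underlies several of the computations below.

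First I analyze each $L(h)$ as a graded sub-super-comodule. If $h \in Y$ the formula $\Delta(h)=h\otimes h$ exhibits $S(h)=Kh$ as a one-dimensional sub-super-comodule, and the quotient $L(h)/S(h)$, spanned by the class of $hz$, carries the coaction $\overline{hz}\mapsto \overline{hz}\otimes gh$, so it is isomorphic to $\Pi S(gh)$; the formula $\Delta(hz) = h\otimes hz + hz \otimes gh$ shows that $Khz$ itself is not a subcomodule, whence $L(h)$ is indecomposable of length two. If $h \notin Y$, the term $\langle x,h\rangle hz\otimes ghz$ in $\Delta(h)$ forces any graded subcomodule containing $h$ to also contain $hz$, while the formula for $\Delta(hz)$ gives the converse; thus $L(h)$ is simple. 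All of this transfers verbatim to the parity-shifted versions.

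Next I combine these facts to identify the simples and the injective envelopes. Every simple super-comodule embeds in the injective cogenerator $K[D_{g,x}]$ (or its parity shift), hence into some $L(h)$ or $\Pi L(h)$; the subobject analysis then shows the simples are exactly those listed in \eqref{eq:simple}. Since $L(h)$ is injective, indecomposable, and has simple socle (either $S(h)$ for $h \in Y$ or $L(h)$ itself for $h \notin Y$), it is the injective envelope of that socle, yielding \eqref{eq:injective_indecomposable}. For part (1), any finite-dimensional indecomposable $M$ has simple socle---otherwise one splits off the injective envelope of an isotypic summand using injectivity---so $M$ embeds in a single $L(h)$ or $\Pi L(h)$ and must be one of the items in \eqref{eq:indecomposable}. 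For arbitrary objects one uses that the super-comodule category decomposes as a direct sum of blocks indexed by the $\{1,g\}$-orbits on $X$ (with parity data), and within each block the structure is controlled by a small finite-dimensional algebra, forcing every object to be a direct sum of items from the list.

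The mutual non-isomorphism claim is handled by comparing $D$-weights after restricting along the quotient $K[D_{g,x}]\to K[D]$: $S(h)$ carries the single weight $h$ in even parity, $\Pi S(h)$ the same weight in odd parity, $L(h)$ has weights $h$ (even) and $gh$ (odd), while $\Pi L(h)$ swaps those two parities. The step deserving the most care is the exhaustiveness in part (1) for possibly infinite-dimensional objects, where one must genuinely rule out exotic indecomposables and also be careful about whether $L(h)$ and $\Pi L(gh)$ coincide in certain degenerate cases; the block decomposition along the $g$-action on $X$ reduces this to a tractable local question, but it is the main technical point.
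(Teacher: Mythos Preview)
Your identification of the simples and the injective indecomposables is correct and essentially the same as the paper's. The real issue is the exhaustiveness claim in Part~(1).

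Your key assertion, ``any finite-dimensional indecomposable $M$ has simple socle---otherwise one splits off the injective envelope of an isotypic summand using injectivity'', does not hold up. Injectivity of $I(S)$ for a simple $S\subset\mathrm{Soc}\,M$ lets you extend maps \emph{into} $I(S)$, not produce an embedding $I(S)\hookrightarrow M$; there is no reason the injective hull of a socle summand sits inside $M$. What you actually need is that $I(S)$ is \emph{projective}, so that a surjection $M\twoheadrightarrow I(S)$ (obtained from a non-split length-two subquotient) splits. This is exactly how the paper proceeds: it first proves, via an explicit duality pairing, that each $L(h)$ and $\Pi L(h)$ is projective (Lemma~\ref{lem:projective}), then shows that any non-semisimple object admits a non-split surjection onto some $L(h)$ or $\Pi L(h)$, which therefore splits off. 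Without the projectivity step your argument has a genuine gap.

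Your fallback via block decomposition is also not solid as stated. The claim that blocks are indexed by $\{1,g\}$-orbits uses $g^2=1$, which Remark~\ref{rem:abelian_case} gives only when $x\neq 0$. When $x=0$ one has $Y=X$ and $g$ can have infinite order, so the $\langle g\rangle$-orbit on $X$ (which is what actually indexes the block, via the chain $S(h)\text{---}\Pi S(gh)\text{---}S(g^2h)\text{---}\cdots$) may be infinite; the phrase ``controlled by a small finite-dimensional algebra'' is then simply false. One can still rescue a direct argument here by observing from the coaction formulas that for each simple $T$ there is a \emph{unique} simple $S$ with $\mathrm{Ext}^1(S,T)\neq 0$, so the category is serial and all indecomposables are uniserial of length at most the Loewy length of the injectives, namely~$2$. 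But that is a different argument from the one you wrote, and it is essentially the content of Proposition~\ref{prop:Dgx_Ext}, which the paper proves and then combines with projectivity.

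Finally, your caution about whether $L(h)$ and $\Pi L(gh)$ can coincide is well placed; when $g^2=1$ and $h\notin Y$ the $D$-weight-with-parity invariants you list do not separate them, so this really needs a direct check rather than the weight comparison you sketch.
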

\begin{proof}
It is easy to see that 
those listed in \eqref{eq:indecomposable} are mutually non-isomorphic. For example, 
if $f : L(h) \to L(h')$ is an isomorphism which is required to preserve the parity, 
then $f(h) = \lambda_1h'$ and $f(hz)=\lambda_2h'z$ for
some $\lambda_1, \lambda_2 \in K \setminus 0$, from which one sees $h=h'$. 

We see from \eqref{eq:direct_sum} that all injective indecomposables are given by 
\eqref{eq:injective_indecomposable}. Their socles give all simples. 
Suppose $h \in Y$. Then we have an extension
\begin{equation}\label{eq:extension_L(h)}
0 \to S(h) \to L(h) \to \Pi S(gh) \to 0,
\end{equation}
which is non-split since the odd $hz$ does not span a $D_{g,x}$-super-submodule. 
Therefore, the socle $\mathrm{Soc}\, L(h)$ equals $S(h)$.
Suppose $h \in X \setminus Y$. Since neither of the homogeneous $h$ and $hz$ in $L(h)$
spans a $D_{g,x}$-super-submodule, we have $\mathrm{Soc}\, L(h)=L(h)$. 
It follows that all simples are given by \eqref{eq:simple}. 

At the end of this section we will prove that every non-simple indecomposable is injective, 
which will complete the proof. 
\end{proof}

\begin{rem}\label{rem:inhomogeneous_grouplike}
Suppose $g=1$ and $h \in X \setminus Y$. Let $\alpha = \langle x, h \rangle \, (\ne 0)$, and assume 
$\sqrt{\alpha} \in K$. Then the elements $h \pm \sqrt{\alpha}\, hz$ contained in the simple $L(h)$ 
are grouplikes in the usual, wider sense. 
But, being inhomogeneous, each of them does not span a $D_{g,x}$-super-submodule.
\end{rem}

Before continuing the proof by Proposition \ref{prop:Dgx_Ext} below, we give consequences of 
Proposition \ref{prop:Dgx_representation}. 

\begin{corollary}\label{cor:Dgx_representation}
For $D_{g,x}$ we have the following.
\begin{itemize}
\item[(1)] $D_{g,x}$ is not linearly reductive.
\item[(2)] $D_{g,x}$ is super-diagonalizable (or $(D_{g,x})_u =1$) if and only if $x \ne 0$.
\item[(3)] $D_{g,x}/(D_{g,x})_u$ is purely even and diagonalizable if and only if $x = 0$.
\end{itemize}
\end{corollary}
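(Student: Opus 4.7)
The plan is to handle the three claims in turn, reducing each to results already established. Part (1) will follow from Proposition \ref{prop:Dgx_representation} by exhibiting a non-split extension of simples; part (2) will be a direct instance of Proposition \ref{prop:characterize_super-diagonalizable}; and part (3) will be extracted from part (2) together with Example \ref{ex:Ggx}(1). I expect nothing delicate here—the only place to be careful is identifying the unipotent radical of the semidirect product $D \ltimes \mathsf{G}_a^-$ in the case $x = 0$.

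For (1), I would first observe that $1 \in Y$: indeed $\langle x, 1 \rangle = 0$ because $x$, viewed in $K[D]^{\circ}$, is primitive and hence vanishes on the unit. By Proposition \ref{prop:Dgx_representation} the object $L(1)$ is then indecomposable but not simple, since \eqref{eq:extension_L(h)} displays it as a non-split extension of $\Pi S(g)$ by $S(1)$ (both simple as $1, g \in Y$). Hence $D_{g,x}\text{-}\mathsf{SMod}$ is not semisimple, and so $D_{g,x}$ is not linearly reductive.

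For (2), I would apply Proposition \ref{prop:characterize_super-diagonalizable} to the Harish-Chandra pair $(D, Kv)$ associated with $D_{g,x}$ by Lemma-Definition \ref{lemdef:Ggx}. Its bracket is determined by $[v,v] = 2x$, so since $\mathrm{char}\, K \ne 2$, non-degeneracy is equivalent to $x \ne 0$. The proposition then yields super-diagonalizability iff $x \ne 0$.

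For (3), I would treat the two directions separately. If $x = 0$, Example \ref{ex:Ggx}(1) identifies $D_{g,0}$ with $D \ltimes \mathsf{G}_a^-$; the factor $\mathsf{G}_a^-$ is normal and unipotent, so $(D_{g,0})_u \supseteq \mathsf{G}_a^-$, and because the quotient $D_{g,0}/\mathsf{G}_a^- \cong D$ is diagonalizable (hence has trivial unipotent radical) we must have equality. Therefore $D_{g,0}/(D_{g,0})_u \cong D$ is purely even and diagonalizable. Conversely, if $x \ne 0$ then by part (2) we have $(D_{g,x})_u = 1$, so $D_{g,x}/(D_{g,x})_u = D_{g,x}$ itself; by Proposition \ref{prop:Ggx}(2) this satisfies $\dim \mathrm{Lie}(D_{g,x})_1 = 1$ and is thus not purely even.
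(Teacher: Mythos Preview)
Your proof is correct. Parts (1) and (2) match the paper's approach exactly: both exhibit a non-semisimple object via Proposition~\ref{prop:Dgx_representation} (you do this by noting $1\in Y$ so that $L(1)$ is indecomposable non-simple), and both invoke Proposition~\ref{prop:characterize_super-diagonalizable} for the non-degeneracy criterion.

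For Part (3) you take a genuinely different route. The paper deduces (3) directly from Proposition~\ref{prop:Dgx_representation}, observing that $D_{g,x}/(D_{g,x})_u$ is purely even and diagonalizable if and only if $\mathrm{Corad}(K[D_{g,x}])$ is spanned by grouplikes, which from the list~\eqref{eq:simple} of simples amounts to $X\setminus Y=\emptyset$, i.e.\ $X=Y$, i.e.\ $x=0$. Your argument instead bypasses the coradical analysis entirely: for $x=0$ you use the explicit semidirect product description of Example~\ref{ex:Ggx}(1) and a short unipotent-radical chase, while for $x\ne 0$ you feed Part (2) back in. Your approach is more self-contained and does not require the full classification of simple $D_{g,x}$-supermodules; the paper's approach has the virtue of making all three parts uniform consequences of Proposition~\ref{prop:Dgx_representation} and of highlighting the Hopf-algebraic reformulation via the coradical.
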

\begin{proof} Indeed, Parts 1 and 3 are direct consequences of the proposition. Note that
the first condition of 3 is equivalent to that $\mathrm{Corad}(K[D_{g,x}])$ is spanned
by grouplikes, which is seen to be equivalent to $X=Y$. Part 2 follows from Proposition 
\ref{prop:characterize_super-diagonalizable}. 
\end{proof}

\begin{prop}\label{prop:Dgx_Ext}
Let $S, T \in D_{g,x}\text{-}\mathsf{SMod}$ be simples. The 1st extension space $\mathrm{Ext}^1_{D_{g,x}}(S,T)$ is 
as follows. 
\begin{itemize}
\item[(1)] If (i)~$(S, T)=(\Pi S(gh), S(h))$ or (ii)~$(S,T)= (S(gh), \Pi S(h))$, where $h \in Y$, then
\[ \mathrm{Ext}^1_{D_{g,x}}(S,T) = K. \]
Every non-split extension is isomorphic, up to a scalar-multiplication, to 
\eqref{eq:extension_L(h)} in Case (i), and to the parity shift
$0 \to \Pi S(h) \to \Pi L(h) \to S(gh) \to 0$ of \eqref{eq:extension_L(h)} in Case (ii).
\item[(2)] In the remaining cases we have
\[ \mathrm{Ext}^1_{D_{g,x}}(S,T) = 0. \]
\end{itemize}
\end{prop}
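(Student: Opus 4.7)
The plan is to compute $\mathrm{Ext}^1_{D_{g,x}}(S,T)$ via the long exact sequence associated to short exact sequences whose middle terms are injective, making systematic use of the injective indecomposables identified in Proposition \ref{prop:Dgx_representation}(2) together with careful parity bookkeeping.

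First, if $T = L(h)$ or $T = \Pi L(h)$ with $h \in X \setminus Y$, then $T$ is injective, so $\mathrm{Ext}^1_{D_{g,x}}(S,T) = 0$ for every simple $S$. This already eliminates every pair $(S,T)$ not of the form (i) or (ii).

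Next, suppose $T = S(h)$ with $h \in Y$. The extension \eqref{eq:extension_L(h)} has injective middle term $L(h)$, so applying $\mathrm{Hom}_{D_{g,x}}(S,-)$ to it produces
\[
\mathrm{Hom}(S, L(h)) \to \mathrm{Hom}(S, \Pi S(gh)) \to \mathrm{Ext}^1_{D_{g,x}}(S, S(h)) \to 0.
\]
For simple $S$, the term $\mathrm{Hom}(S, \Pi S(gh))$ vanishes unless $S \cong \Pi S(gh)$ (noting $gh \in Y$ since $g, h \in Y$), and in the surviving case $S = \Pi S(gh)$ the odd simple $S$ can only map to $L(h)$ through its even socle $S(h)$, forcing $\mathrm{Hom}(\Pi S(gh), L(h)) = 0$. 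Hence $\mathrm{Ext}^1_{D_{g,x}}(S, S(h)) = K$ exactly when $S = \Pi S(gh)$, and vanishes otherwise. The case $T = \Pi S(h)$ is obtained by applying $\Pi$ throughout: the analogous sequence $0 \to \Pi S(h) \to \Pi L(h) \to S(gh) \to 0$ yields $\mathrm{Ext}^1_{D_{g,x}}(S, \Pi S(h)) = K$ exactly when $S = S(gh)$, which is case (ii).

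Finally, since $\mathrm{Ext}^1_{D_{g,x}}(S,T)$ is one-dimensional in cases (i) and (ii), and the extension \eqref{eq:extension_L(h)} (respectively, its parity shift) was already shown to be non-split in the proof of Proposition \ref{prop:Dgx_representation}, it represents a non-zero class and hence spans the whole of $\mathrm{Ext}^1$. Consequently, every non-split extension is isomorphic to a scalar multiple of it, completing the proof. There is no genuine obstacle in the argument; the point requiring the most attention is the consistent tracking of parities, particularly the observation that the socles of $L(h)$ and $\Pi L(h)$ carry opposite parity, which is precisely what forces the non-vanishing of $\mathrm{Ext}^1$ in exactly the two listed situations.
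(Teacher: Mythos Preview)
Your argument is correct and complete, though the sentence ``This already eliminates every pair $(S,T)$ not of the form (i) or (ii)'' is premature: after the first paragraph you have only handled the case where $T$ is one of the two-dimensional simples $L(h)$ or $\Pi L(h)$ with $h\in X\setminus Y$. The pairs $(S,S(h))$ with $h\in Y$ and $S\not\cong\Pi S(gh)$ are also not of form (i) or (ii), and they are only dealt with in your second paragraph via the vanishing of $\mathrm{Hom}(S,\Pi S(gh))$. This is a harmless slip in exposition; the mathematics that follows does cover every case.

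Your approach differs genuinely from the paper's. The paper argues directly with extensions: given a non-split $0\to T\to M\to S\to 0$, it observes that $\mathrm{Soc}\,M=T$, embeds $M$ into the injective hull $L$ of $T$, and uses the fact that every injective indecomposable is two-dimensional to force $M=L$; this immediately pins down $(S,T)$ and shows the extension is a scalar multiple of \eqref{eq:extension_L(h)} or its parity shift. You instead compute $\mathrm{Ext}^1$ cohomologically, applying $\mathrm{Hom}(S,-)$ to the short exact sequence \eqref{eq:extension_L(h)} (or its shift) and exploiting injectivity of its middle term to truncate the long exact sequence. Your route is slightly more mechanical and requires the small Hom computation $\mathrm{Hom}(\Pi S(gh),L(h))=0$, which you justify correctly by parity and the comodule structure on $hz$; the paper's route is more conceptual and avoids that computation but leans on the injective-hull machinery. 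Both reach the same destination with comparable effort.
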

\begin{proof}
Let $0 \to T \to M \to S \to 0$ be a non-split extension. Then $\mathrm{Soc}\, M = T$. 
Let $L$ be an injective hull of $M$. Then $L$ is indecomposable. It follows from the last proof that
$\dim L =2$, and so $M=L$. Moreover, $(S, T)$ must be in Case (i) or (ii), and we have the commutative diagram
\[
\begin{xy}
(-1,0)   *++{0}  ="1",
(9,0)   *++{T}  ="2",
(20,0)   *++{M}  ="3",
(31,0)   *++{S}  ="4",
(41,0)   *++{0}  ="5",
(-1,-12)  *++{0}  ="6",
(9,-12) *++{T}  ="7",
(20,-12) *++{L}  ="8",
(31,-12) *++{S}  ="9",
(41,-12) *++{0,}  ="10",
{"1" \SelectTips{cm}{} \ar @{->} "2"},
{"2" \SelectTips{cm}{} \ar @{->} "3"},
{"3" \SelectTips{cm}{} \ar @{->} "4"},
{"4" \SelectTips{cm}{} \ar @{->} "5"},
{"6" \SelectTips{cm}{} \ar @{->} "7"},
{"7" \SelectTips{cm}{} \ar @{->} "8"},
{"8" \SelectTips{cm}{} \ar @{->} "9"},
{"9" \SelectTips{cm}{} \ar @{->} "10"},
{"2" \SelectTips{cm}{} \ar @{=} "7"},
{"3" \SelectTips{cm}{} \ar @{=} "8"},
{"4" \SelectTips{cm}{} \ar @{->}^{\simeq} "9"}
\end{xy}
\]
where the second row is \eqref{eq:extension_L(h)} or its parity shift. 
Since the isomorphism $S \overset{\simeq}{\longrightarrow} S$ above is a scalar multiplication, the 
proposition follows. 
\end{proof}

\begin{lemma}\label{lem:projective}
The injective indecomposables listed in \eqref{eq:injective_indecomposable} are projective.
\end{lemma}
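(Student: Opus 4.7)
The plan is to exhibit a non-zero right integral on $A = K[D_{g,x}]$, which makes $A$ a co-Frobenius Hopf superalgebra, and then to conclude by invoking the super-analog of the standard theorem that in a co-Frobenius Hopf (super)algebra, injective (super)comodules are projective. By Proposition \ref{prop:Dgx_representation}, this will finish the lemma, since the injective indecomposables there listed are precisely $L(h)$ and $\Pi L(h)$ for $h \in X$.

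The candidate integral is the linear map $\lambda \colon A \to K$ defined on the basis of $A = KX \oplus KXz$ by $\lambda(h) = 0$ and $\lambda(hz) = \delta_{h,\,g^{-1}}$. The core computation is to check $\lambda(a_{(1)}) a_{(2)} = \lambda(a)\cdot 1$ for all $a \in A$. For $a = hz$, the explicit $\Delta(hz) = h \otimes hz + hz \otimes gh$ yields $\lambda(h)\, hz + \lambda(hz)\, gh = \delta_{h,\,g^{-1}} \cdot 1$, matching $\lambda(hz)$. For $a = h$, the explicit $\Delta(h) = h \otimes h + \langle x, h\rangle hz \otimes ghz$ yields $\langle x, h\rangle \delta_{h,\,g^{-1}} \cdot ghz$; the only potentially non-zero instance is $h = g^{-1}$, and there it vanishes because $\langle x, g^{-1}\rangle = -\langle x, g\rangle = 0$, by \eqref{eq:xg0} together with $\mathcal{S}(x) = -x$. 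Thus $\lambda$ is a non-zero right integral on $A$.

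With $A$ co-Frobenius, the super-analog of the Sweedler--Sullivan--Radford theorem yields that injective right $A$-supercomodules are projective; this can be derived from the ordinary Hopf-algebra result by passing to the bosonization $A \# K\mathbb{Z}_2$, which inherits a non-zero integral from $\lambda$ and whose $\mathbb{Z}_2$-graded right comodules are precisely the right $A$-supercomodules. Applied to the injectives $L(h)$ and $\Pi L(h)$ listed in \eqref{eq:injective_indecomposable}, this gives the desired projectivity.

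The main obstacle I anticipate is the clean appeal to the super-version of the co-Frobenius theorem. A self-contained alternative, which avoids it, exploits the fact that each $L(h)$ is a direct summand of $A$ as a right $A$-supercomodule by \eqref{eq:direct_sum}; one then uses $\lambda$, together with the Fundamental Theorem of Hopf supermodules, to construct directly a comodule splitting of any surjection $N \twoheadrightarrow A$, thereby reducing the projectivity of $L(h)$ to that of $A$ itself.
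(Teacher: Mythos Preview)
Your proof is correct, and the integral computation is clean: the key vanishing $\langle x,g^{-1}\rangle=0$ is exactly what makes it work, and your derivation of it from \eqref{eq:xg0} is fine. The route, however, differs from the paper's. The paper argues directly and elementarily: for each $h\in X$ it writes down an explicit non-degenerate bilinear form $\Pi L(h^{-1})\times L(g^{-1}h)\to K$ and checks, again using \eqref{eq:xg0}, that it is a morphism in $D_{g,x}\text{-}\mathsf{SMod}$; thus every injective indecomposable is the dual of another, hence projective. Your approach packages the same information globally via the right integral $\lambda$ and then appeals to the co-Frobenius machinery (or its bosonized form). The paper's argument is entirely self-contained and avoids any external appeal to super co-Frobenius theory, while yours is more conceptual and, as you note, essentially recovers the general fact recorded in the paper's Remark~\ref{rem:coFrobenius} specialized to this case. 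Either approach is acceptable; if you want to avoid the detour through bosonization, the explicit duality $\Pi L(h^{-1})\simeq L(g^{-1}h)^*$ is a two-line check.
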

\begin{proof}
Let $h \in X$. The map 
$\langle \ , \ \rangle : \Pi L(h^{-1})\times L(g^{-1}h)  \to K$ defined by
\[ \langle h^{-1}, g^{-1}hz \rangle = \langle h^{-1}z , g^{-1}h \rangle =1,\quad
\langle h^{-1} , g^{-1}h \rangle = \langle h^{-1}z, g^{-1}hz \rangle = 0 \]
is a non-degenerate bilinear form. This is, regarded as  $\Pi L(h^{-1}) \otimes L(g^{-1}h) \to K$, a morphism in 
$D_{g,x}\text{-}\mathsf{SMod}$, as is seen by using \eqref{eq:xg0}. Therefore,
$\Pi L(h^{-1})$ and $L(g^{-1}h)$ are dual to each other. This implies the desired result. 
\end{proof}

\begin{rem}\label{rem:coFrobenius}
An unpublished result  
by the first-named author, Pastro and Shibata announced in \cite{mas4} (see Lemma 7.3 and Proposition 7.5)
tells us that given an algebraic supergroup $\mathbb{G}$, every injective object in $\mathbb{G}\text{-}\mathsf{SMod}$ 
is projective if and only if $\mathbb{G}_{ev}$ has the same property, that is, 
every injective $\mathbb{G}_{ev}$-module
is projective. This is obviously the case if $\mathbb{G}$ is even-diagonalizable. Lemma \ref{lem:projective}
is a special case of this more general result. 
\end{rem}

\begin{proof}[Proof of Proposition \ref{prop:Dgx_representation}~(Continued)]
It remains to prove that every non-simple indecomposable is injective. This will follow
if we prove:
\medskip

\noindent
\textbf{Claim.}\quad \emph{Every non-zero non-semisimple object $L \in D_{g,x}\text{-}\mathsf{SMod}$ (possibly
of infinite dimension) includes a non-zero injective direct summand.}  
\medskip

We wish to prove that if $0\to T \to M \to S \to 0$ is a non-split extension, 
where $S$ is simple and $T$ is finite-dimensional semisimple, then 
$M$ includes a non-zero injective direct summand. 
Apply this to a finite-dimensional submodule $M \subset L$ and its socle $T =\mathrm{Soc}\, M$,
such that $M/T$ is simple. Then the claim will be proven. 
Suppose that $T = \bigoplus_iT_i$ with $T_i$ finitely many simples. Then for some $i$, the extension
$0 \to T_i \to M_i \to S\to 0$ induced by the projection $T \to T_i$ 
is non-split, whence $M_i$ is one of those listed in \eqref{eq:injective_indecomposable}, by 
Proposition \ref{prop:Dgx_Ext}. We have a surjection $M \to M_i$, which necessarily splits
by Lemma \ref{lem:projective}. This
proves the desired result.
\end{proof}

\section{Solvability of even-trigonalizable supergroups}\label{sec:main_results}

To prove our first main result, Theorem \ref{thm:even-trigonalizable}, we start with the following.

\begin{lemma}\label{lem:quotient}
Let $\mathbb{G}$ be a even-trigonalizable supergroup. If $\mathbb{G}$ is not purely even, then it has a quotient
supergroup which is isomorphic to one of those listed in \eqref{eq:gen_by_1skewprim}. 
\end{lemma}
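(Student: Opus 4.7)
The plan is to exploit the Harish-Chandra pair (HCP) correspondence of Theorem~\ref{thm:equivalence}. Let $(G,V)$ be the HCP of $\mathbb{G}$; since $\mathbb{G}$ is not purely even, $V\neq 0$ by Remark~\ref{rem:quasi-inverse}. I will construct a normal sub-HCP $(H,W)$ of $(G,V)$, in the sense of Lemma~\ref{lem:normal}, with $\dim(V/W)=1$ and $G/H$ isomorphic to one of $1$, $\mathsf{G}_m$, or $\mu_n$ (for some $n>1$). The corresponding quotient HCP $(G/H,V/W)$, computed by Lemma~\ref{lem:quotientHCP}, will then yield via Theorem~\ref{thm:equivalence} a quotient supergroup of the form $(G/H)\ltimes \mathsf{G}_a^-$, which by Example~\ref{ex:Ggx}(1),(4) is one of the items listed in \eqref{eq:gen_by_1skewprim}.

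To select $(H,W)$, I would first note that since $G$ is trigonalizable and $V^*\neq 0$, the finite-dimensional $G$-module $V^*$ contains a one-dimensional submodule; this gives a $G$-equivariant surjection $V\twoheadrightarrow Kv_{g_0}$ for some $g_0\in X(G)$, whose kernel $W$ is a codimension-one $G$-submodule. Set $H:=\ker\chi_{g_0}$ when $g_0\neq 1$, and $H:=G$ otherwise; in both cases $G/H$ embeds in $\mathsf{G}_m$, hence is one of $1$, $\mathsf{G}_m$, or $\mu_n$. Conditions (i)--(iii) of Lemma~\ref{lem:normal} hold by construction. The main step is condition (iv), $[V,W]\subset\Lie(H)$, equivalent to $\langle[V,W],g_0\rangle=0$ under the pairing~\eqref{eq:pairing}. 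I expect (iv) to follow from the HCP axiom~(ii), $v\triangleleft [v,v]=0$, by polarization. In the diagonalizable case $G=D$, writing $V=\bigoplus_h V(h)$ and polarizing $v\triangleleft [v,v]=0$ on $v=\alpha v_h+\beta v_{h^{-1}}+\gamma v_{g_0}$ (for a weight $h$ of $V$ with $h^{-1}$ also a weight) forces the coefficient of $v_{g_0}$ to vanish, yielding $\langle [v_h,v_{h^{-1}}],g_0\rangle =0$; combined with the fact that $[V(k),V(k')]=0$ unless $kk'=1$ in the diagonalizable setting, this gives condition~(iv).

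Once the normal sub-HCP $(H,W)$ is in place, the induced bracket on $V/W$ lies in $\Lie(G/H)$ and must vanish by condition~(ii) of Section~\ref{sec:Ggx}: since $G/H$ is abelian, its adjoint action is trivial, so the quotient-bracket element $x$ satisfies $x\otimes(g_0^2-1)=0$, forcing $x=0$ whenever $g_0^2\neq 1$ in $K[G/H]$ (in the remaining cases $G/H\in\{1,\mu_2\}$ one has $\Lie(G/H)=0$ in characteristic $\neq 2$). The quotient supergroup is therefore $(G/H)\ltimes \mathsf{G}_a^-$, as desired. The main obstacle is verifying~(iv) when $G$ is trigonalizable but not diagonalizable, as $V$ then admits only a composition series rather than a weight-space decomposition. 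I would reduce to the diagonalizable case by the observation that $\chi_{g_0}$ factors through $D=G/G_u$, so $G_u\subset H$ and $\Lie(G_u)\subset\Lie(H)$; the polarization identity should then apply on the associated graded $D$-module of the composition series of $V$, with the residual contributions to $[V,W]$ landing in $\Lie(G_u)\subset\Lie(H)$.
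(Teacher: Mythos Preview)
Your overall plan matches the paper's: use the Harish-Chandra pair $(G,V)$, find a one-dimensional $G$-quotient of $V$ via trigonalizability, take $H=\ker g$ for the corresponding character $g$, verify the conditions of Lemma~\ref{lem:normal}, and identify the quotient via Example~\ref{ex:Ggx}(4). Your endgame argument that the induced bracket on the quotient vanishes is also essentially correct.

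The gap is in your verification of condition~(iv), $[V,W]\subset\Lie(H)$. Your diagonalizable case is only sketched, and the proposed reduction from the trigonalizable to the diagonalizable case via the associated graded is not justified: there is no obvious reason why the ``residual contributions'' of $[V,W]$ relative to a composition series filtration of $V$ should land in $\Lie(G_u)$. The bracket is $G$-equivariant, but that alone does not force this filtration compatibility.

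The paper avoids this difficulty entirely and handles the trigonalizable case directly. The key observation you are missing is that $W$, being a $G$-submodule, is $\Lie(G)$-stable. One tests $[v,w]\in\Lie(H)$ by checking that $[v,w]\triangleright z=0$ in $V^*$, equivalently that $\langle v'\triangleleft[v,w],\,z\rangle=0$ for all $v'\in V$; here $z\in V^*$ spans the one-dimensional $G$-submodule and $W=\ker z$. Now the polarization identities you anticipated, namely
\[
v'\triangleleft[v,w]=-v\triangleleft[w,v']-w\triangleleft[v,v']
\quad\text{and}\quad
v\triangleleft[v,w]=-\tfrac12\,w\triangleleft[v,v],
\]
combined with the $\Lie(G)$-stability of $W$, show that $v'\triangleleft[v,w]\in W$ in every case (split according to whether $v'\in W$, $v\in W$, or both $v,v'\notin W$, in which last case one may take $v=v'$ since $\dim V/W=1$). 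No weight-space decomposition or passage to $G/G_u$ is needed.
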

\begin{proof}
Let $G =\mathbb{G}_{ev}$, $V = \mathrm{Lie}({\mathbb{G}})_1$, and 
$(G,V)$ the Harish-Chandra pair assigned to $\mathbb{G}$. 
By Remark \ref{rem:quasi-inverse} the assumption implies $V \ne 0$. 
We have the left $G$-module $V^*(\ne 0)$ which is dual to $V$. Since 
$G$ is trigonalizable, there exist $z \in V^* \setminus 0$ and a grouplike $g \in K[G]$
such that the left $G$-action on $z$ arises from $z \mapsto z \otimes g$. Define $W =(V^*/Kz)^*$.
This is a $G$-submodule of $V$ of codimension 1. We see that $K[G]/(g-1)$ is a quotient Hopf
algebra of $K[G]$. Moreover, the corresponding closed subgroup $H$ of $G$ is normal, and
$G/H$ is represented by the Hopf subalgebra $K[g^{\pm 1}]\subset K[G]$ generated by $g^{\pm 1}$. 
Regard $W$ as a right $H$-module by restriction. 
\medskip

\noindent
\textbf{Claim.}\quad \emph{$(H, W)$ is a sub-object
of the Harish-Chandra pair $(G,V)$, and the corresponding closed super-subgroup $\mathbb{H}$ of $\mathbb{G}$ 
is normal.}
\medskip
 
This will follow if we verify Conditions (iii) and (iv) of Lemma \ref{lem:normal}.

It is easy to verify (iii). Indeed, $Kz$, regarded as a left $H$-module, is trivial, whence the dual
right $H$-module $V/W$ is trivial.  

For (iv), let $v \in V$, $w \in W$. Since $\mathrm{Lie}(H)$ consists of all elements of $\mathrm{Lie}(G)$ 
that annihilate $g$, it suffices to prove that $[v,w]$ annihilates $z$, or $[v,w]\triangleright z=0$ 
with respect to the $\mathrm{Lie}(G)$-action arising from the left $G$-action on $V^*$. 
For this, using the canonical pairing $V \times V^* \to K$, we wish to prove 
\[ \langle v'\triangleleft [v,w], z \rangle =0, \quad  v' \in V  .\]
Here one should note that the left-hand side equals $\langle v', [v,w]\triangleright z \rangle$.  
If $v'\in W$, the equality holds since $W$ is $\mathrm{Lie}(G)$-stable. If $v \in W$, the equality holds since
we have 
\[ v'\triangleleft [v,w] = -v \triangleleft [w,v']- w \triangleleft [v,v']\in W. \]
If $v, v' \in V \setminus W$, we may assume $v=v'$, due to the preceding results and the fact
$\dim V/W=1$. Then the desired equality follows since we see from \cite[Definition 6, (d)]{mas3}
\[ v'\triangleleft [v,w] = v \triangleleft [v,w] = -\frac{1}{2}w\triangleleft [v,v]\in W. \]
This completes the proof of Claim. 

The quotient supergroup $\mathbb{G}/\mathbb{H}$ corresponds to a Harish-Chandra pair of the form $(D, V/W)$,
where $K[D]=K[g^{\pm 1}]$, $\dim V/W=1$, and the $D$-module structure on $V/W$ is given by $g$.
We conclude from Example \ref{ex:Ggx} (4) that $\mathbb{G}/\mathbb{H}$ appears in 
\eqref{eq:gen_by_1skewprim}. 
\end{proof}

\begin{theorem}\label{thm:even-trigonalizable}
Every even-trigonalizable supergroup $\mathbb{G}$ has a normal chain of closed super-subgroups
\[ \mathbb{G}_0 \lhd \mathbb{G}_1\lhd \ldots\lhd \mathbb{G}_t=\mathbb{G}, \quad t \ge 0 \]
such that $\mathbb{G}_0$ is a trigonalizable algebraic group, and each factor
$\mathbb{G}_i/\mathbb{G}_{i-1}$, $0<i\le t$, is isomorphic to 
$\mathsf{G}_a^-$, $\mathsf{G}_m$ or $\mu_n$ for some $n>1$. 
\end{theorem}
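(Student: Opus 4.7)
The plan is to prove Theorem \ref{thm:even-trigonalizable} by induction on $d := \dim \mathrm{Lie}(\mathbb{G})_1$, using Lemma \ref{lem:quotient} as the workhorse. When $d = 0$, Remark \ref{rem:quasi-inverse} forces $\mathbb{G}$ to be purely even, so $\mathbb{G}$ itself is a trigonalizable algebraic group, and I simply take $t = 0$ and $\mathbb{G}_0 = \mathbb{G}$.

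For the inductive step with $d > 0$, I would apply Lemma \ref{lem:quotient} to obtain a closed normal super-subgroup $\mathbb{H} \lhd \mathbb{G}$ whose quotient $\mathbb{G}/\mathbb{H}$ is isomorphic to one of $\mathsf{G}_a^-$, $\mathsf{G}_m \ltimes \mathsf{G}_a^-$, or $\mu_n \ltimes \mathsf{G}_a^-$. Inspecting the construction in that lemma, the Harish-Chandra pair of $\mathbb{H}$ is $(H, W)$ with $W$ of codimension one in $V = \mathrm{Lie}(\mathbb{G})_1$, so $\dim \mathrm{Lie}(\mathbb{H})_1 = d - 1$. Since $H$ is a closed subgroup of the trigonalizable group $G = \mathbb{G}_{ev}$, it too is trigonalizable, and therefore $\mathbb{H}$ is even-trigonalizable. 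The inductive hypothesis supplies a normal chain $\mathbb{H}_0 \lhd \mathbb{H}_1 \lhd \cdots \lhd \mathbb{H}_r = \mathbb{H}$ of the prescribed form.

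To complete the chain for $\mathbb{G}$, if $\mathbb{G}/\mathbb{H} \simeq \mathsf{G}_a^-$ I would simply append $\mathbb{G}$, producing a top factor $\mathsf{G}_a^-$. In the semi-direct product cases, $\mathsf{G}_a^-$ is normal in the product with quotient $\mathsf{G}_m$ (resp.\ $\mu_n$), so I let $\mathbb{H}' \subset \mathbb{G}$ be the preimage under $\mathbb{G} \to \mathbb{G}/\mathbb{H}$ of that copy of $\mathsf{G}_a^-$; by the standard normal super-subgroup/quotient correspondence recalled via \cite[Theorem 5.9]{mas2}, $\mathbb{H}'$ is a closed normal super-subgroup of $\mathbb{G}$ containing $\mathbb{H}$ with $\mathbb{H}'/\mathbb{H} \simeq \mathsf{G}_a^-$ and $\mathbb{G}/\mathbb{H}' \simeq \mathsf{G}_m$ or $\mu_n$. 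Appending first $\mathbb{H}'$ and then $\mathbb{G}$ to the chain inductively built for $\mathbb{H}$ yields the required normal series.

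The real content of the argument is packed into Lemma \ref{lem:quotient}; the only nontrivial verification remaining here is that trigonalizability descends to closed subgroups, which amounts to the Hopf-algebraic fact that a quotient Hopf algebra of a commutative Hopf algebra whose coradical is spanned by grouplikes (\emph{i.e.}, a pointed commutative Hopf algebra) is again pointed. Given this, and the already-invoked correspondence between closed normal super-subgroups and quotient super-groups, the refinement of the two semi-direct-product quotients into two factors apiece is mechanical, so I do not expect any serious obstacle beyond bookkeeping.
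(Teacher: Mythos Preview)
Your proposal is correct and follows essentially the same approach as the paper's proof: induction on $\dim\mathrm{Lie}(\mathbb{G})_1$, invoking Lemma~\ref{lem:quotient} for the inductive step, and refining the semi-direct-product quotient into two successive factors via an intermediate normal super-subgroup (your $\mathbb{H}'$ is the paper's $\mathbb{N}$). The paper's write-up is terser but the logic is identical.
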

\begin{proof}
We prove by induction on $d=\dim \mathrm{Lie}(\mathbb{G})_1$. 
If $d=0$, we have the result with $t=0$. 
Suppose $d>0$.
Then Lemma \ref{lem:quotient} gives a normal closed super-subgroup $\mathbb{H}$ such that
$\mathbb{G}/\mathbb{H}$ appears in \eqref{eq:gen_by_1skewprim}. 
We have $\mathbb{H}\lhd \mathbb{N} \lhd \mathbb{G}$ such that
$\mathbb{N}/\mathbb{H}\simeq \mathsf{G}_a^-$, and $\mathbb{G}/\mathbb{N}$ is trivial, 
or isomorphic to $\mathsf{G}_m$ or $\mu_n$. 
Since $\dim \mathrm{Lie}(\mathbb{G}/\mathbb{H})_1=1$, we have $\dim \mathrm{Lie}(\mathbb{H})_1=d-1$
by Lemma \ref{lem:quotientHCP}. 
Since $\mathbb{H}_{ev}$ is a closed subgroup
of the trigonalizable $\mathbb{G}_{ev}$, $\mathbb{H}_{ev}$ is trigonalizable, or 
$\mathbb{H}$ is even-trigonalizable. The induction hypothesis applied
to $\mathbb{H}$ proves the theorem. 
\end{proof}

Given an algebraic supergroup $\mathbb{G}$, the \emph{derived super-subgroup} 
$\mathcal{D}\mathbb{G}$ of $\mathbb{G}$ 
is the smallest closed normal super-subgroup $\mathbb{N}$ such that $\mathbb{G}/\mathbb{N}$ is abelian.
The construction of this $\mathcal{D}\mathbb{G}$ is essentially the same as the one given in
\cite[Page 73]{water} in the non-super situation, and it, therefore, commutes with 
extension of the base field. The Hopf super-subalgebra
$K[\mathbb{G}/\mathcal{D}\mathbb{G}]$ of $K[\mathbb{G}]$ is characterized as the 
largest super-cocommutative super-subcoalgebra; this also shows the commutativity with base extension. 
We define inductively,
$\mathcal{D}^0\mathbb{G} = \mathbb{G}$, $\mathcal{D}^r\mathbb{G} 
= \mathcal{D}(\mathcal{D}^{r-1}\mathbb{G})$, $r >0$. We say that $\mathbb{G}$
is \emph{solvable} if $\mathcal{D}^r\mathbb{G}=1$ for some $r$. This is equivalent to 
that any/some base extension of $\mathbb{G}$ is solvable.

\begin{corollary}\label{cor:even-trigonalizable_solvable}
Every even-trigonalizable supergroup is solvable.
\end{corollary}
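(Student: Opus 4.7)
The plan is to derive this directly from Theorem \ref{thm:even-trigonalizable} by the standard subnormal-series argument, transported to the super setting via the two formal properties of $\mathcal{D}$ recalled just above the corollary. Given an even-trigonalizable supergroup $\mathbb{G}$, Theorem \ref{thm:even-trigonalizable} provides a normal chain
\[
\mathbb{G}_0 \lhd \mathbb{G}_1 \lhd \ldots \lhd \mathbb{G}_t = \mathbb{G}
\]
in which each factor $\mathbb{G}_i/\mathbb{G}_{i-1}$ is one of $\mathsf{G}_a^-$, $\mathsf{G}_m$, $\mu_n$ (all abelian), and $\mathbb{G}_0$ is a trigonalizable algebraic group (purely even).

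First I would record two elementary properties of the derived super-subgroup.
(a) If $\mathbb{N} \lhd \mathbb{H}$ is a normal closed super-subgroup with $\mathbb{H}/\mathbb{N}$ abelian, then $\mathcal{D}\mathbb{H} \subseteq \mathbb{N}$; this is immediate from the minimality in the definition of $\mathcal{D}\mathbb{H}$.
(b) If $\mathbb{H}$ is a closed super-subgroup of $\mathbb{G}$, then $\mathcal{D}\mathbb{H} \subseteq \mathcal{D}\mathbb{G}$; indeed, the composite morphism $\mathbb{H} \hookrightarrow \mathbb{G} \twoheadrightarrow \mathbb{G}/\mathcal{D}\mathbb{G}$ lands in an abelian supergroup, so by (a) its kernel $\mathbb{H} \cap \mathcal{D}\mathbb{G}$ contains $\mathcal{D}\mathbb{H}$. (Alternatively, (b) can be read off the Hopf-algebraic characterization of $K[\mathbb{G}/\mathcal{D}\mathbb{G}]$ as the largest super-cocommutative super-subcoalgebra of $K[\mathbb{G}]$, by pushing forward along the quotient map $K[\mathbb{G}] \twoheadrightarrow K[\mathbb{H}]$.)

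Combining (a) and (b), a short induction on $i$ yields $\mathcal{D}^i\mathbb{G} \subseteq \mathbb{G}_{t-i}$ for $0 \le i \le t$; the inductive step uses (b) to pass from $\mathcal{D}^i\mathbb{G} \subseteq \mathbb{G}_{t-i}$ to $\mathcal{D}^{i+1}\mathbb{G} \subseteq \mathcal{D}\mathbb{G}_{t-i}$, and then (a) applied to the abelian quotient $\mathbb{G}_{t-i}/\mathbb{G}_{t-i-1}$ to conclude $\mathcal{D}\mathbb{G}_{t-i} \subseteq \mathbb{G}_{t-i-1}$. At $i=t$ we obtain $\mathcal{D}^t\mathbb{G} \subseteq \mathbb{G}_0$. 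Since $\mathbb{G}_0$ is a trigonalizable algebraic group, the classical Lie--Kolchin-type result (see, e.g., \cite{water}) gives $\mathcal{D}^{r_0}\mathbb{G}_0 = 1$ for some $r_0 \ge 0$, and then a further application of (b) yields $\mathcal{D}^{t+r_0}\mathbb{G} \subseteq \mathcal{D}^{r_0}\mathbb{G}_0 = 1$. Thus $\mathbb{G}$ is solvable.

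I do not anticipate any real obstacle: Theorem \ref{thm:even-trigonalizable} has done all the work of reducing to abelian factors and a trigonalizable base, and the only point requiring attention is property (b), which is formal from the Hopf-algebraic description of $\mathcal{D}$ already noted in the text.
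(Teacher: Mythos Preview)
Your proof is correct and takes essentially the same approach as the paper: both deduce the corollary directly from Theorem~\ref{thm:even-trigonalizable} by the standard subnormal-series argument, using that the factors are abelian and that the trigonalizable algebraic group $\mathbb{G}_0$ is solvable. The paper's proof is a two-line sketch leaving the routine induction on the chain implicit, whereas you have spelled out the needed monotonicity properties (a) and (b) of $\mathcal{D}$ explicitly.
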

\begin{proof}
Recall that every trigonalizable algebraic group is solvable.
This, applied to the
$\mathbb{G}_0$ in a normal chain as in Theorem \ref{thm:even-trigonalizable}, proves the corollary.  
\end{proof}
\begin{corollary}\label{cor:converse} 
Let $\mathbb{G}$ be a connected smooth supergroup. Then the following are equivalent:
\begin{itemize}
\item[(a)] $\mathbb{G}$ is solvable;
\item[(b)] $\mathbb{G}_{ev}$ is solvable.
\end{itemize} 
\end{corollary}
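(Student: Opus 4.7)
The implication $(a)\Rightarrow(b)$ is essentially formal. Recall from Section \ref{subsec:supergroups} that $\mathbb{G}_{ev}$ is a closed sub-supergroup of $\mathbb{G}$. The composite $\mathbb{G}_{ev}\hookrightarrow \mathbb{G}\to \mathbb{G}/\mathcal{D}\mathbb{G}$ lands in an abelian supergroup, so it factors through $\mathbb{G}_{ev}/\mathcal{D}\mathbb{G}_{ev}$; this forces $\mathcal{D}\mathbb{G}_{ev}\subset \mathcal{D}\mathbb{G}\cap \mathbb{G}_{ev}$, and iterating yields $\mathcal{D}^r\mathbb{G}_{ev}\subset \mathcal{D}^r\mathbb{G}$ for every $r$. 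Solvability of $\mathbb{G}$ therefore descends to $\mathbb{G}_{ev}$, and the connected/smooth hypotheses are not even needed for this direction.

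For the substantive direction $(b)\Rightarrow(a)$, my plan is to reduce to the case of an algebraically closed base field and then invoke Corollary \ref{cor:even-trigonalizable_solvable}. Fix an algebraic closure $\overline{K}$ of $K$ and consider the base extension $\mathbb{G}_{\overline{K}}$. Since the formation of $(-)_{ev}$ commutes with base change, we have $(\mathbb{G}_{\overline{K}})_{ev}=(\mathbb{G}_{ev})_{\overline{K}}$. Using the hypotheses on $\mathbb{G}$ together with Section \ref{subsec:supergroups} (for connectedness) and Proposition \ref{prop:smooth_Hopf} (for smoothness), this algebraic group is connected, smooth, and solvable, now over the algebraically closed field $\overline{K}$.

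The classical Lie--Kolchin theorem then yields that $(\mathbb{G}_{\overline{K}})_{ev}$ is trigonalizable, so $\mathbb{G}_{\overline{K}}$ is even-trigonalizable. Corollary \ref{cor:even-trigonalizable_solvable} applies and shows $\mathbb{G}_{\overline{K}}$ is solvable; since, as noted in the paragraph preceding that corollary, solvability of $\mathbb{G}$ is equivalent to solvability of some (equivalently, any) base extension, we conclude that $\mathbb{G}$ itself is solvable.

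The main obstacle, to the extent there is one, is the appeal to Lie--Kolchin: one must confirm that connectedness, smoothness, and solvability all transfer along the chain $\mathbb{G}\rightsquigarrow \mathbb{G}_{ev}\rightsquigarrow (\mathbb{G}_{ev})_{\overline{K}}$ so that the classical statement is applicable. This is precisely where both standing hypotheses on $\mathbb{G}$ are used; the remainder of the argument is a short chain of reductions to results already established earlier in the paper.
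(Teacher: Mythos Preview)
Your proof is correct and follows essentially the same route as the paper: reduce to an algebraically closed base field, use smoothness and connectedness of $\mathbb{G}_{ev}$ (via Proposition \ref{prop:smooth_Hopf} and Section \ref{subsec:supergroups}) to apply Lie--Kolchin, and then invoke Corollary \ref{cor:even-trigonalizable_solvable}. The paper dispatches $(a)\Rightarrow(b)$ with the single word ``Obviously''; your explicit argument via $\mathcal{D}^r\mathbb{G}_{ev}\subset \mathcal{D}^r\mathbb{G}$ is a valid unpacking of that.
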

\begin{proof}
By base extension we may suppose that $K$ is algebraically closed.
In this case we prove that Conditions (a), (b) and the following are all equivalent. 
\begin{itemize}
\item[(c)] $\mathbb{G}$ is even-trigonalizable.
\end{itemize}

Obviously, (a) $\Rightarrow$ (b). Corollary \ref{cor:even-trigonalizable_solvable} proves (c) $\Rightarrow$ (a).

The Lie-Kolchin Triangularization Theorem \cite[Theorem 10.2]{water} tells us
that a connected smooth algebraic group over an algebraically close field is trigonalizable, 
provided it is solvable. This proves (b) $\Rightarrow$ (c), since $\mathbb{G}_{ev}$ is now
smooth (and connected) by Proposition \ref{prop:smooth_Hopf} in the Appendix.
\end{proof}

Ulyashev and the second-named author \cite[Theorem 4.2]{zubul} proved the result above, 
when $\mathrm{char}\, K=0$, in which case every algebraic supergroup is smooth; 
see Proposition \ref{prop:smooth_Hopf}, again. 

\section{Nilpotency criteria for connected supergroups}\label{sec:nilpotent_supergroup}

We aim to prove our second main result, Theorem \ref{thm:nilpotency}.
 
\subsection{}\label{subsec:center}
Let $\mathbb{G}$ be an affine supergroup. A closed super-subgroup $\mathbb{H}$ is said to be
\emph{central}, if for every $R\in \mathsf{SAlg}_K$, the subgroup $\mathbb{H}(R)$ of $\mathbb{G}(R)$
is central. The condition is equivalent to that the right, say, adjoint $\mathbb{H}$-action on 
$\mathbb{G}$ is trivial. 

Just as in the non-super situation \cite[Page 27]{water}, 
the \emph{center} $\mathcal{Z}(\mathbb{G})$ of $\mathbb{G}$ is defined to be the group-valued functor 
such that $\mathcal{Z}(\mathbb{G})(R)$ consists of those elements of $\mathbb{G}(R)$
whose natural images under every morphism $R \to S$ in $\mathsf{SAlg}_K$ are central in $\mathbb{G}(S)$. 
It is proved in \cite[Section 1]{zubgrish} 
that $\mathcal{Z}(\mathbb{G})$ is representable and is indeed a closed super-subgroup of $\mathbb{G}$; 
the center is thus the largest central closed super-subgroup of $\mathbb{G}$.  

\begin{prop}\label{prop:center_ev}
Let $\mathbb{G}$ be an algebraic supergroup. Let 
\begin{equation}\label{eq:rho}
\rho : \mathbb{G}_{ev} \to \mathsf{GL}(V)
\end{equation} 
denote the linear representation associated with the right $\mathbb{G}_{ev}$-module 
$V= \mathrm{Lie}(\mathbb{G})_1$
defined by \eqref{eq:analogous_adjoint}. Then we have
\[ \mathcal{Z}(\mathbb{G})_{ev} = \mathcal{Z}(\mathbb{G}_{ev})\cap \mathrm{Ker}\, \rho. \]
\end{prop}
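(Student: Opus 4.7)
The plan is to establish both inclusions, using the Harish-Chandra pair correspondence of Theorem \ref{thm:equivalence} in the generalized form of Remark \ref{rem:equivalence_generalized}, which remains valid over any $2$-torsion free commutative base ring, hence in particular over every commutative $K$-algebra since $\mathrm{char}\,K \ne 2$. For the inclusion $\mathcal{Z}(\mathbb{G})_{ev} \subseteq \mathcal{Z}(\mathbb{G}_{ev}) \cap \mathrm{Ker}\,\rho$, I would fix $R \in \mathsf{Alg}_K$ and $\gamma \in \mathcal{Z}(\mathbb{G})_{ev}(R)$: since $\gamma$ commutes with every element of $\mathbb{G}(R)$, it commutes a fortiori with $\mathbb{G}_{ev}(R)$, giving $\gamma \in \mathcal{Z}(\mathbb{G}_{ev})(R)$; further, the adjoint action of $\gamma$ on $\mathbb{G}$ is trivial, so its induced action on $\mathrm{Lie}(\mathbb{G})_1 = V$ is trivial, and by \eqref{eq:analogous_adjoint} this induced action is exactly $\rho(\gamma)$, whence $\gamma \in \mathrm{Ker}\,\rho(R)$.

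For the reverse inclusion, I would set $H := \mathcal{Z}(\mathbb{G}_{ev}) \cap \mathrm{Ker}\,\rho$ and take $\gamma \in H(R)$ for arbitrary $R \in \mathsf{Alg}_K$. To conclude $\gamma \in \mathcal{Z}(\mathbb{G})(R)$, it suffices to prove that the inner-automorphism morphism $c_\gamma : \mathbb{G}_R \to \mathbb{G}_R$, $h \mapsto \gamma h \gamma^{-1}$, of the base-extended algebraic supergroup equals $\mathrm{id}_{\mathbb{G}_R}$; for once this is known, the image of $\gamma$ in $\mathbb{G}(S) = \mathbb{G}_R(S)$ under any morphism $R \to S$ in $\mathsf{SAlg}_K$ is automatically central in $\mathbb{G}(S)$. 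Via the Harish-Chandra pair equivalence over the base $R$, $c_\gamma$ corresponds to an endomorphism of the Harish-Chandra pair $(G_R, V_R)$ of $\mathbb{G}_R$. Its $G_R$-component is the conjugation action on $G_R$, which is trivial since $\gamma \in \mathcal{Z}(\mathbb{G}_{ev})(R) = \mathcal{Z}(G)(R)$. Its $V_R$-component is, by \eqref{eq:analogous_adjoint}, the $R$-linear extension of $\rho(\gamma)$, which equals the identity since $\gamma \in \mathrm{Ker}\,\rho(R)$. Faithfulness of the equivalence then forces $c_\gamma = \mathrm{id}_{\mathbb{G}_R}$.

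The main obstacle is to invoke the Harish-Chandra pair equivalence over the arbitrary commutative $K$-algebra $R$ rather than just over $K$; this is precisely the content of Remark \ref{rem:equivalence_generalized}, and once that device is available, the remaining verifications become essentially formal.
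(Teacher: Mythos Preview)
Your proposal is correct and follows essentially the same approach as the paper: the inclusion $\subset$ is obtained from the triviality of the adjoint action of $\mathcal{Z}(\mathbb{G})$, and the inclusion $\supset$ is proved by showing that conjugation by $\gamma \in H(R)$ induces the identity on the Harish-Chandra pair $(G_R, V_R)$, invoking the equivalence of Remark~\ref{rem:equivalence_generalized} over the base $R$ to conclude $c_\gamma = \mathrm{id}_{\mathbb{G}_R}$. The only minor imprecision is in the phrasing for $\subset$: saying ``$\gamma$ commutes with every element of $\mathbb{G}(R)$'' is not by itself enough to place $\gamma$ in $\mathcal{Z}(\mathbb{G}_{ev})(R)$, but your subsequent observation that the full adjoint action of $\gamma$ is trivial (hence trivial after every base change) does the job.
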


After an earlier version of this paper was submitted, Proposition \ref{prop:center_ev} was generalized
by Corollary 5.10 of \cite{masshiba2}; it describes the Harish-Chandra pair corresponding 
to $\mathcal{Z}(\mathbb{G})$, which thus contains the description of $\mathcal{Z}(\mathbb{G})_{ev}$
above. But we retain below our proof of the proposition, 
whose method is different from the one used in \cite{masshiba2}. 

\begin{proof}
Set $G=\mathbb{G}_{ev}$. 
Suppose that $\lambda : K[\mathbb{G}]\to K[\mathbb{G}]\otimes  K[G]$ represents
the right $G$-adjoint action on $\mathbb{G}$. Note that this is dualized to 
the $G$-module structure on $V$. Since 
$\mathcal{Z}(\mathbb{G})_{ev}= \mathcal{Z}(\mathbb{G})\cap G$,
the inclusion $\subset$ follows. 

To see $\supset$, set $H= \mathcal{Z}(G)\cap \mathrm{Ker}\, \rho$. We have to prove
that for every $\gamma \in H(R)$, where $R\ne 0$ is an arbitrary commutative algebra, the automorphism 
\begin{equation}\label{eq:automorphism}
(\mathrm{id}_{K[\mathbb{G}]} \otimes \gamma)\circ \lambda : 
K[\mathbb{G}]\otimes R \to K[\mathbb{G}]\otimes R 
\end{equation}
of the Hopf superalgebra $K[\mathbb{G}]\otimes R$ over $R$ is the identity map. To see that the corresponding
automorphism of the algebraic supergroup $\mathbb{G}_R$ over $R$ is the identity map, 
it suffices by \cite[Therem 4.23]{masshiba}, cited in Remark \ref{rem:equivalence_generalized},
to prove the corresponding automorphism, say $(\phi, \psi)$, of the Harish-Chandra pair $(G_R,V \otimes R)$ 
over $R$ is the identity map, but this is easy to see. Indeed, 
$\gamma \in \mathcal{Z}(G)$ implies $\phi=\mathrm{id}$, while
$\gamma \in \mathrm{Ker}\, \rho$ implies $\psi= \mathrm{id}$. 

As an additional remark, it is easy to see that the assumptions 
required by \cite[Therem 4.23]{masshiba} are now satisfied; for example, 
the base ring $R$ is $2$-torsion free since $2^{-1} \in K \subset R$. 
\end{proof}

\subsection{}\label{subsec:m-part}
Recall that an algebraic group $F$ is said to be \emph{of multiplicative type}, if 
it is diagonalizable after base extension to the algebraic closure of $K$. 

The next lemma follows from Proposition \ref{prop:abelian_supergroup}
and the corresponding
result for abelian algebraic groups. 

\begin{lemma}\label{lem:m-part}
Every abelian algebraic supergroup $\mathbb{H}$ includes uniquely a closed
super-subgroup $F$ such that (i)~$F$ is an algebraic group of multiplicative type, and 
(ii)~$\mathbb{H}/F$ is unipotent. 
If $K$ is perfect, then the embedding $F \hookrightarrow \mathbb{H}$ uniquely splits. 
\end{lemma}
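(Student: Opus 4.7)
The plan is to reduce everything to the well-known classical statement for abelian algebraic groups by invoking Proposition \ref{prop:abelian_supergroup}. That proposition lets me write $\mathbb{H} \simeq G \times (\mathsf{G}_a^-)^V$ for an abelian algebraic group $G$ (which is nothing but $\mathbb{H}_{ev}$) and a finite-dimensional vector space $V$. The second factor is unipotent, either directly from Example \ref{ex:algebraic_(super)group}~(3) or via Proposition \ref{prop:unipotency} since its even part is trivial.

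For the existence of $F$, I apply the classical theorem that any abelian algebraic group $G$ contains a unique closed subgroup $F\subseteq G$ of multiplicative type with $G/F$ unipotent. Viewing $F$ as a closed super-subgroup of $\mathbb{H}$ via the embedding $G\hookrightarrow\mathbb{H}$, one has $\mathbb{H}/F\simeq (G/F)\times(\mathsf{G}_a^-)^V$, whose even part $G/F$ is unipotent; by Proposition \ref{prop:unipotency} this quotient is unipotent, proving (i)~and (ii). For uniqueness, note that an algebraic group of multiplicative type is, in the convention of this paper, purely even; hence any candidate $F'\subseteq\mathbb{H}$ satisfies $F'\subseteq\mathbb{H}_{ev}=G$. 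If moreover $\mathbb{H}/F'$ is unipotent, then by Proposition \ref{prop:unipotency} its even part $(\mathbb{H}/F')_{ev}=G/F'$ (using the exactness of $\cdot_{ev}$, \cite[Theorem 5.13 (3)]{mas2}) is unipotent, so classical uniqueness forces $F'=F$.

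Finally, when $K$ is perfect, the classical structure theorem for abelian algebraic groups furnishes a unique splitting $G\simeq F\times U$ with $U=G/F$ unipotent. Substituting into the decomposition of $\mathbb{H}$ gives $\mathbb{H}\simeq F\times\bigl(U\times(\mathsf{G}_a^-)^V\bigr)$, which exhibits the desired splitting of $F\hookrightarrow\mathbb{H}$; its uniqueness again follows from the uniqueness of the classical splitting together with the fact that any complement of $F$ in $\mathbb{H}$ meets $G$ in a complement of $F$ in $G$. No step presents a serious obstacle: the entire argument is a direct unpacking of Proposition \ref{prop:abelian_supergroup} combined with the standard multiplicative-type/unipotent decomposition for abelian algebraic groups.
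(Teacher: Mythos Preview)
Your proposal is correct and follows exactly the approach the paper indicates: the paper does not give a detailed proof but simply states that the lemma ``follows from Proposition~\ref{prop:abelian_supergroup} and the corresponding result for abelian algebraic groups,'' and you have faithfully unpacked that sentence. The only minor point is that your uniqueness-of-splitting argument via complements could be phrased more cleanly as follows: any splitting $s:\mathbb{H}\to F$ restricts on $G$ to the unique classical splitting, and restricts on $(\mathsf{G}_a^-)^V$ to a morphism from a unipotent supergroup to one of multiplicative type, hence trivial; so $s$ is determined.
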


We let $\mathbb{H}_m$ denote the $F$ above. 

\subsection{}\label{subsec:nilpotent_connected_supergroup}
Let $\mathbb{G}$ be an algebraic supergroup. We define normal closed super-subgroups 
$\mathcal{Z}^1\mathbb{G} \subset \mathcal{Z}^2\mathbb{G} \subset \dots$ of $\mathbb{G}$, inductively by 
$\mathcal{Z}^1\mathbb{G} = \mathcal{Z}(\mathbb{G})$,\ $\mathcal{Z}^r\mathbb{G}/\mathcal{Z}^{r-1}\mathbb{G} 
= \mathcal{Z}(\mathbb{G}/\mathcal{Z}^{r-1}\mathbb{G})$, $r >1$. We say that $\mathbb{G}$
is \emph{nilpotent} if $\mathcal{Z}^r\mathbb{G}=\mathbb{G}$ for some $r$. The smallest $r>0$ such that
$\mathcal{Z}^r\mathbb{G}=\mathbb{G}$ is called the \emph{nilpotency length} of $\mathbb{G}$. 

Nilpotent supergroups are solvable. 

The following was proved by the second-named author \cite{zub3}. 
We give below an alternative, Hopf-algebraic proof.

\begin{prop}[\text{\cite[Proposition 3.2]{zub3}}]\label{prop:unipotent_nilpotent}
Every unipotent algebraic supergroup is nilpotent. 
\end{prop}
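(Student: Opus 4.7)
The plan uses the Harish-Chandra pair equivalence (Theorem~\ref{thm:equivalence}) together with induction. Let $(G,V)$ denote the Harish-Chandra pair attached to $\mathbb{G}$, with $G=\mathbb{G}_{ev}$ and $V=\mathrm{Lie}(\mathbb{G})_1$. By Proposition~\ref{prop:unipotency}, the algebraic group $G$ is unipotent, and hence nilpotent by the classical theorem for algebraic groups.

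I would induct on $d(\mathbb{G}):=\dim G+\dim V$. The inductive step reduces to the key claim that every non-trivial unipotent algebraic supergroup $\mathbb{H}$ has non-trivial center $\mathcal{Z}(\mathbb{H})$. Granting this, the quotient $\mathbb{G}/\mathcal{Z}(\mathbb{G})$ is again unipotent (the condition $\mathrm{Corad}(K[\mathbb{G}])=K$ descends to quotient Hopf superalgebras) and strictly smaller in $d$-invariant by Lemma~\ref{lem:quotientHCP}; by induction it is nilpotent of some class $k$, and the standard lift of nilpotency through the central extension $1\to\mathcal{Z}(\mathbb{G})\to\mathbb{G}\to\mathbb{G}/\mathcal{Z}(\mathbb{G})\to 1$ yields $\mathcal{Z}^{k+1}\mathbb{G}=\mathbb{G}$.

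For the key claim, if $V=0$, then $\mathbb{H}=G$ is a unipotent algebraic group with $\mathcal{Z}(G)\ne 1$ classically. Suppose $V\ne 0$. Proposition~\ref{prop:center_ev} directly settles the sub-case $\mathcal{Z}(G)\cap\mathrm{Ker}\,\rho\ne 1$. In the remaining sub-case, the representation $\rho:G\to\mathsf{GL}(V)$ from~\eqref{eq:rho} is unipotent (as $G$ is unipotent), so $V^G\ne 0$; the plan is to show that the subspace
\[ W:=\{\, w\in V^G\mid [w,V]=0\,\} \]
is non-zero, using the $G$-equivariance of the bracket, the axiom $v\triangleleft[v,v]=0$, and a super-Engel-type filtration of $V$ by $G$-submodules with trivial socle factors. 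The sub-pair $(1,W)$ then satisfies the normality conditions of Lemma~\ref{lem:normal}, and a direct functorial check on $R\in\mathsf{SAlg}_K$ confirms that the corresponding purely odd closed super-subgroup is central in $\mathbb{H}$.

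The main obstacle is establishing the non-vanishing of $W$ in this last sub-case: one has to exploit how severely the Harish-Chandra axioms restrict the bracket on the fixed subspace $V^G$ when $G$ acts unipotently. I expect this to require an auxiliary induction on $\dim V$ (or on the length of the $G$-socle filtration), eventually reducing to the case of a one-dimensional trivial socle factor, where a direct computation using $v\triangleleft[v,v]=0$ forces a non-zero element of $V^G$ to lie in the left radical of the bracket.
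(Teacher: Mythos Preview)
Your approach is quite different from the paper's. The paper gives a short Hopf-algebraic argument that never mentions Harish-Chandra pairs or the center: writing $A=K[\mathbb{G}]$, it observes that $P(A)\ne 0$ (since $\mathrm{Corad}(A)=K$) and that $P(A)$ is fixed by the adjoint $\mathbb{G}$-coaction, so the Hopf super-subalgebra $B_1$ it generates is adjoint-trivial; iterating on $A/B_i^+A$ and using unipotency to locate a non-zero trivial sub-supermodule inside $P(A/B_i^+A)$ produces an ascending chain $K=B_0\subsetneqq B_1\subsetneqq\cdots$ which terminates because $A$ is Noetherian. Dually this is a central series for $\mathbb{G}$.

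Your proposal, by contrast, has two genuine gaps. First, the invariant $d=\dim G+\dim V$ need not strictly decrease when one passes to $\mathbb{G}/\mathcal{Z}(\mathbb{G})$ in characteristic $p>2$. Lemma~\ref{lem:quotientHCP} only tells you the quotient pair is $(G/H,V/W')$; if $H=\mathcal{Z}(\mathbb{G})_{ev}$ is a non-trivial \emph{finite} unipotent group scheme and the odd part $W'$ of the center vanishes, then $d$ is unchanged. So in your ``settled'' sub-case $\mathcal{Z}(G)\cap\mathrm{Ker}\,\rho\ne 1$ you have shown $\mathcal{Z}(\mathbb{G})\ne 1$ but not that the induction advances. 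Second---and you flag this yourself---the heart of the argument, the non-vanishing of $W=\{w\in V^G:[w,V]=0\}$ when $\rho$ is faithful, is not proved. The sketch you offer (socle filtration plus the axiom $v\triangleleft[v,v]=0$) is not convincing as stated: that axiom constrains only the diagonal values $[v,v]$, whereas what you must kill are the off-diagonal brackets $[w,v]$ for $w\in V^G$ and arbitrary $v$, and you give no mechanism for passing from one to the other. Finally, even granting $W\ne 0$, the assertion that the sub-pair $(1,W)$ yields a \emph{central} (not merely normal) super-subgroup is not immediate from anything in the paper: Proposition~\ref{prop:center_ev} records only $\mathcal{Z}(\mathbb{G})_{ev}$, so your ``direct functorial check'' would itself need to be carried out.
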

\begin{proof}
Let $\mathbb{G}$ be a unipotent algebraic supergroup, and set $A = K[\mathbb{G}]$. We may assume
$\mathbb{G}\ne 1$, and so $P(A) \ne 0$; see \cite[Section 10.0]{sw}.
Recall from Section \ref{subsec:supergroups} that the right $\mathbb{G}$-adjoint action on $\mathbb{G}$ 
makes $A$ into a Hopf-algebra object in $\mathbb{G}\text{-}\mathsf{SMod}$. 
Note that $P(A)$ is $\mathbb{G}$-stable, and is indeed a trivial $\mathbb{G}$-supermodule. 
Hence it generates a Hopf super-subalgebra $B_1 \,(\ne K)$
of $A$ which is trivial as a $\mathbb{G}$-supermodule; we remark that as this $B_1$, a super-cocommutative
Hopf super-subalgebra of $A$ is a trivial $\mathbb{G}$-supermodule.
If $B_1 \subsetneqq A$, we have 
the quotient Hopf superalgebra $A/B_1^+A\, (\ne K)$ (see \eqref{eq:quotient_Hopf}), 
which is indeed a Hopf-algebra object in $\mathbb{G}\text{-}\mathsf{SMod}$, again. We see that
$P(A/B_1^+A)\, (\ne 0)$ is $\mathbb{G}$-stable, and includes a non-zero trivial $\mathbb{G}$-super-submodule 
since $\mathbb{G}$ is unipotent. It generates a Hopf super-subalgebra $(\ne K)$ of  $A/B_1^+A$  
which is trivial as a $\mathbb{G}$-supermodule.
We thus have a chain,
$K=B_0 \subsetneqq B_1 \subsetneqq B_2$, of $\mathbb{G}$-stable Hopf super-subalgebras of $A$,
such that each quotient Hopf superalgebra $B_i/B_{i-1}^+B_i$ is trivial as a $\mathbb{G}$-supermodule.
Continue the process so long as $B_r \subsetneqq A$.
But it must end after finitely many steps, or $B_r =A$ for some $r$, since
the ascending chain $B_1^+A \subsetneqq B^+_2A \subsetneqq \dots$ of Hopf super-ideals of the Noetherian $A$ becomes 
stationary; see Section \ref{subsec:smooth_superalgebras} in the Appendix. 
We have a chain, $1=\mathbb{N}_r \subsetneqq \dots \subsetneqq \mathbb{N}_1
\subsetneqq \mathbb{N}_0 = \mathbb{G}$, of normal closed super-subgroups of $\mathbb{G}$, such that
$K[\mathbb{G}/\mathbb{N}_i] = B_i$, and each $\mathbb{N}_{i-1}/\mathbb{N}_i$ is central in 
$\mathbb{G}/\mathbb{N}_i$. This proves the desired nilpotency. 
\end{proof}

\begin{theorem}\label{thm:nilpotency}
Let $\mathbb{G}$ be an algebraic supergroup. Concerning the conditions given below we have
(a) $\Rightarrow$ (b) $\Rightarrow$ (c) $\Rightarrow$ (d). If in addition, $\mathbb{G}$ is 
connected, then these conditions are equivalent to each other. 
\begin{itemize}
\item[(a)] $\mathbb{G}/\mathcal{Z}(\mathbb{G})_m$ is unipotent;
\item[(b)] $\mathbb{G}$ fits into a central extension $1 \to F \to \mathbb{G} \to \mathbb{U} \to 1$ 
of a unipotent supergroup $\mathbb{U}$ by an algebraic group $F$ of multiplicative type; 
\item[(c)] $\mathbb{G}$ is nilpotent; 
\item[(d)] $\mathbb{G}_{ev}$ is nilpotent, and $\mathcal{Z}(\mathbb{G}_{ev})_m \subset \mathrm{Ker}\, \rho$,
where $\rho$ is the linear representation \eqref{eq:rho}. 
\end{itemize}
\end{theorem}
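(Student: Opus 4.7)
The plan is to prove the cycle (a)~$\Rightarrow$~(b)~$\Rightarrow$~(c)~$\Rightarrow$~(d) in full generality, and then close it with (d)~$\Rightarrow$~(a) assuming connectedness. For (a)~$\Rightarrow$~(b), simply take $F := \mathcal{Z}(\mathbb{G})_m$, which is central and of multiplicative type by Lemma \ref{lem:m-part} applied to the abelian $\mathcal{Z}(\mathbb{G})$, together with $\mathbb{U} := \mathbb{G}/F$. For (b)~$\Rightarrow$~(c), since $F \subset \mathcal{Z}(\mathbb{G})$, the quotient $\mathbb{G}/\mathcal{Z}(\mathbb{G})$ is a further quotient of the unipotent $\mathbb{U}$, nilpotent by Proposition \ref{prop:unipotent_nilpotent}; iterating the ascending central series then delivers nilpotency of $\mathbb{G}$.

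For (c)~$\Rightarrow$~(d), nilpotency of $\mathbb{G}_{ev}$ follows by applying the exact functor $(-)_{ev}$ of \cite[Theorem 5.13 (3)]{mas2} to the ascending central series of $\mathbb{G}$, together with the inclusion $\mathcal{Z}(\mathbb{G})_{ev} \subset \mathcal{Z}(\mathbb{G}_{ev})$ that is immediate from Proposition \ref{prop:center_ev}. The sharper inclusion $T := \mathcal{Z}(\mathbb{G}_{ev})_m \subset \mathrm{Ker}\, \rho$ I would prove by induction on the nilpotency length $r$ of $\mathbb{G}$. The base case $r = 1$ treats abelian $\mathbb{G}$, where Proposition \ref{prop:abelian_supergroup} presents $\mathbb{G}$ as a direct product $G \times (\mathsf{G}_a^-)^V$ with $G = \mathbb{G}_{ev}$ acting trivially on $\mathrm{Lie}(\mathbb{G})_1 = V$. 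For $r > 1$, set $\overline{\mathbb{G}} := \mathbb{G}/\mathcal{Z}(\mathbb{G})$, which has length $r - 1$; the image $\overline{T}$ of $T$ in $(\overline{\mathbb{G}})_{ev} = \mathbb{G}_{ev}/\mathcal{Z}(\mathbb{G})_{ev}$ is a central subgroup of multiplicative type, so lies inside $\mathcal{Z}((\overline{\mathbb{G}})_{ev})_m$, and the inductive hypothesis forces it to act trivially on $\mathrm{Lie}(\overline{\mathbb{G}})_1 = V/W$, where $W := \mathrm{Lie}(\mathcal{Z}(\mathbb{G}))_1$ and the identification $\mathrm{Lie}(\overline{\mathbb{G}})_1 = V/W$ comes from Lemma \ref{lem:quotientHCP}. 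On the other hand, since $\mathcal{Z}(\mathbb{G})$ is central in $\mathbb{G}$, the $\mathbb{G}_{ev}$-adjoint action on it, hence on $W$, is trivial, so $T$ acts trivially on $W$. Because $T$ is of multiplicative type, hence linearly reductive, the short exact sequence $0 \to W \to V \to V/W \to 0$ splits as a sequence of $T$-modules, and $T$ therefore acts trivially on $V$.

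For (d)~$\Rightarrow$~(a) under connectedness, the classical structure theorem for connected nilpotent algebraic groups expresses $\mathbb{G}_{ev}$ as the commuting product of its maximal central multiplicative-type subgroup $\mathcal{Z}(\mathbb{G}_{ev})_m$ and its unipotent radical $(\mathbb{G}_{ev})_u$, so $\mathbb{G}_{ev}/\mathcal{Z}(\mathbb{G}_{ev})_m \cong (\mathbb{G}_{ev})_u$ is unipotent. Combining the hypothesis $\mathcal{Z}(\mathbb{G}_{ev})_m \subset \mathrm{Ker}\, \rho$ with Proposition \ref{prop:center_ev} yields $\mathcal{Z}(\mathbb{G}_{ev})_m \subset \mathcal{Z}(\mathbb{G})_{ev}$, and since this subgroup is of multiplicative type while $\mathcal{Z}(\mathbb{G})$ is abelian, Lemma \ref{lem:m-part} refines the inclusion to $\mathcal{Z}(\mathbb{G}_{ev})_m \subset \mathcal{Z}(\mathbb{G})_m$. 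Consequently $(\mathbb{G}/\mathcal{Z}(\mathbb{G})_m)_{ev} = \mathbb{G}_{ev}/\mathcal{Z}(\mathbb{G})_m$ is a quotient of the unipotent $(\mathbb{G}_{ev})_u$, whence unipotent, and Proposition \ref{prop:unipotency} lifts this back to unipotency of $\mathbb{G}/\mathcal{Z}(\mathbb{G})_m$.

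I expect the main obstacle to be the inductive step of (c)~$\Rightarrow$~(d): one must carefully align the $T$-action on $V$ with its restrictions to $W$ and $V/W$ through the quotient-HCP correspondence of Lemma \ref{lem:quotientHCP}, and then exploit linear reductivity of multiplicative-type groups in the super-comodule setting to glue the pieces of triviality together. Once these are in place, the remaining implications rest on formal manipulations with Propositions \ref{prop:unipotent_nilpotent}, \ref{prop:center_ev}, Lemma \ref{lem:m-part}, and the exactness of $(-)_{ev}$.
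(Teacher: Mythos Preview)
Your proposal is correct and follows essentially the same route as the paper: both argue (a)~$\Rightarrow$~(b)~$\Rightarrow$~(c) via Lemma~\ref{lem:m-part} and Proposition~\ref{prop:unipotent_nilpotent}, run the same induction on nilpotency length for (c)~$\Rightarrow$~(d) using Lemma~\ref{lem:quotientHCP}, triviality of the $G$-action on $W$, and linear reductivity of multiplicative-type groups, and close (d)~$\Rightarrow$~(a) under connectedness with Proposition~\ref{prop:center_ev} and the classical structure theorem for connected nilpotent algebraic groups (the paper cites \cite[IV,~\S4, Theorem~1.10]{dg}). Your write-up is in places more explicit than the paper's---e.g., you spell out why the image $\overline{T}$ lands in $\mathcal{Z}((\overline{\mathbb{G}})_{ev})_m$ and why $\mathcal{Z}(\mathbb{G}_{ev})_m \subset \mathcal{Z}(\mathbb{G})_m$---but there is no substantive difference in strategy.
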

\begin{proof}
(a) $\Rightarrow$ (b). This is obvious.\ 

(b) $\Rightarrow$ (c).\  This follows from Proposition \ref{prop:unipotent_nilpotent}. 

(c) $\Rightarrow$ (d).\ Let $(H,W)$ and $(G,V)$ be the Harsh-Chandra pair assigned to
$\mathcal{Z}(\mathbb{G})$ and $\mathbb{G}$, respectively. By Lemma \ref{lem:quotientHCP} we have the
Harish-Chandra pair $(G/H, V/W)$ assigned to $\mathbb{G}/\mathcal{Z}(\mathbb{G})$.
We remark that $W$ is trivial as a $G\, (=\mathbb{G}_{ev})$-module, and hence as a $\mathcal{Z}(G)_m$-module.  

Assume (c). Obviously, $G$ is nilpotent. We wish to prove by induction on the nilpotency length of $\mathbb{G}$
that $V$ is trivial as a $\mathcal{Z}(G)_m$-module. The induction hypothesis 
applied to $\mathbb{G}/\mathcal{Z}(\mathbb{G})$ shows that $V/W$ is trivial as a $\mathcal{Z}(G/H)_m$-module.
This, combined with the remark above, 
implies the desired result, since $\mathcal{Z}(G)_m$ is linearly reductive, and
the quotient morphism 
$G\to G/H$ induces $\mathcal{Z}(G)_m \to \mathcal{Z}(G/H)_m$. 

It remains to prove (d) $\Rightarrow$ (a), assuming that $\mathbb{G}$ is connected. 
Set $G=\mathbb{G}_{ev}$. 
Assume (d). The second condition, combined with Proposition \ref{prop:center_ev}, implies
\[ \big(\mathbb{G}/\mathcal{Z}(\mathbb{G})_m \big)_{ev}=G/\mathcal{Z}(\mathbb{G})_m
= G/\mathcal{Z}(G)_m. \]
Due to Proposition \ref{prop:unipotency} our aim is to 
prove that $G/\mathcal{Z}(G)_m$
is unipotent. Theorem 1.10 of \cite[IV, \S 4]{dg}, 
applied to the connected and nilpotent $G$, shows this desired result. 
\end{proof}

\begin{example}\label{ex:nilpotent_Ggx}
The algebraic supergroup $G_{g,x}$ constructed in Section \ref{sec:Ggx} is nilpotent
if and only if $G$ is nilpotent, and $g=1$.
To prove this we may suppose that $G$ is nilpotent, and proceed by induction on the nilpotency length of $G$. 
For ``only if" suppose that $G_{g,x}$ is nilpotent. By 
(c) $\Rightarrow$ (d), proved above, the natural image of $g$ in $K[\mathcal{Z}(G)]$ is $1$. 
It follows that 
$\mathcal{Z}(G)$ is central in $G_{g,x}$, and $g \in K[G/\mathcal{Z}(G)]$. 
The induction hypothesis applied to 
$G_{g,x}/\mathcal{Z}(G)=(G/\mathcal{Z}(G))_{g,x}$ shows $g=1$. 
The same argument, with $g$ replaced by $1$, shows ``if". 
\end{example}

\section{Some counter-examples}\label{sec:counter-examples}

\subsection{}\label{subsec:counter-example1}
Let $\mathbb{G}$ be an affine supergroup. 
Recall that $\mathbb{G}_u$ denotes
the unipotent radical of $\mathbb{G}$. 
Let us discuss when the quotient morphism $\mathbb{G} \to \mathbb{G}/\mathbb{G}_u$ splits. 
If this is the case, then $\mathbb{G}$ decomposes into a semi-direct product, 
\[ \mathbb{G} \simeq \mathbb{G}/\mathbb{G}_u \ltimes \mathbb{G}_u, \]
as is easily seen. Of course the classical Chevalley decomposition for affine groups is in our mind. 
A weak form of this classical
result will be proved in the generalized super context, in the next section; see Proposition \ref{prop:split}.  
We pose here a somewhat more ambitious question; cf. the cited Proposition in Case (b).

\begin{Qu}\label{qu:possibly_generalized}
Given a super-trigonalizable supergroup $\mathbb{G}$ (see Definition \ref{def:even-diagonalizable} (3)), 
does the quotient morphism $\mathbb{G}\to \mathbb{G}/\mathbb{G}_u$ split? 
\end{Qu}

However, the answer is negative, as will be seen from the example below.

Let $G=\mathsf{G}_a \times \mathsf{G}_m$. Then $\mathrm{Lie}(G)$ is spanned by two non-zero elements $x$, $y$ which
span $\mathrm{Lie}(\mathsf{G}_a)$ and $\mathrm{Lie}(\mathsf{G}_m)$, respectively. 
Given $\alpha, \beta \in K$, let
\[ \mathbb{G} =G_{1,{\alpha x+\beta y}}. \]
Recall from Lemma-Definition \ref{lemdef:Ggx} that this is the algebraic supergroup 
corresponding to the Harish-Chandra pair $(G,V)$ which consists of $G$, a one-dimensional 
trivial right $G$-module $V=Kv$, and the bracket determined by $[v,v]=2(\alpha x+ \beta y)$.   
Note that this $\mathbb{G}$ includes $\mathsf{G}_a= \mathsf{G}_a \times 1$ as a central closed
super-subgroup (see Example \ref{ex:nilpotent_Ggx}), and
\[ \mathbb{G}/\mathsf{G}_a \simeq (\mathsf{G}_m)_{1,\beta y}. \]

\begin{lemma}\label{lem:counterexample}
We have the following.
\begin{itemize}
\item[(1)] $\mathbb{G} \to \mathbb{G}/\mathsf{G}_a$ splits if and only if $\alpha=0$.
\item[(2)] $\mathbb{G}_u = \mathsf{G}_a$, if and only if $(\mathsf{G}_m)_{1,\beta y}$ is super-diagonalizable, 
if and only if $\beta\ne 0$.
\end{itemize}
\end{lemma}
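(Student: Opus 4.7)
The plan is to translate everything to Harish-Chandra pairs via Theorem \ref{thm:equivalence}. The HC pair of $\mathbb{G}$ is $(G, Kv)$ with $G = \mathsf{G}_a \times \mathsf{G}_m$ acting trivially on $Kv$ and bracket $[v,v]=2(\alpha x+\beta y)$. The sub-object corresponding to $\mathsf{G}_a$ is $(\mathsf{G}_a, 0)$, so by Lemma \ref{lem:quotientHCP} the quotient HC pair is $(\mathsf{G}_m, Kv)$ with induced bracket $[v,v] = 2\beta y$. This confirms $\mathbb{G}/\mathsf{G}_a \simeq (\mathsf{G}_m)_{1,\beta y}$ as stated in the excerpt, and both parts will be read off from this reformulation.

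For part (1), a splitting of $\mathbb{G}\to\mathbb{G}/\mathsf{G}_a$ corresponds to a morphism $(\sigma_G,\sigma_V)\colon (\mathsf{G}_m,Kv)\to (G,Kv)$ of HC pairs sectioning the quotient. On the $V$-component the section condition forces $\sigma_V=\mathrm{id}$; on the group component $\sigma_G$ must section the projection $G\to \mathsf{G}_m$, and since $\mathrm{Hom}(\mathsf{G}_m,\mathsf{G}_a)=0$ the only candidate is the standard embedding $t\mapsto(1,t)$. The bracket compatibility axiom (b) of Definition \ref{def:HCP} then reads $2(\alpha x + \beta y) = \mathrm{Lie}(\sigma_G)(2\beta y) = 2\beta y$, which holds if and only if $\alpha=0$. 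Conversely, when $\alpha = 0$ all axioms are satisfied, and $\mathbb{G}$ in fact decomposes as $\mathsf{G}_a\times (\mathsf{G}_m)_{1,\beta y}$.

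For part (2), the second equivalence follows immediately from Corollary \ref{cor:Dgx_representation}(2) applied with $D=\mathsf{G}_m$, $g=1$, $x=\beta y$. For the first equivalence, note that $\mathsf{G}_a \subset \mathbb{G}_u$ because $\mathsf{G}_a$ is normal and unipotent. The key step is to establish $\mathbb{G}_u/\mathsf{G}_a = (\mathbb{G}/\mathsf{G}_a)_u$: the image of $\mathbb{G}_u$ in $\mathbb{G}/\mathsf{G}_a$ lies in the unipotent radical, while conversely the preimage in $\mathbb{G}$ of $(\mathbb{G}/\mathsf{G}_a)_u$ is a normal super-subgroup that is an extension of a unipotent supergroup by $\mathsf{G}_a$, hence itself unipotent by Proposition \ref{prop:unipotency} combined with the exactness of $(-)_{ev}$ (\cite[Theorem 5.13(3)]{mas2}), and therefore contained in $\mathbb{G}_u$. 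The main (mild) obstacle is this extension-of-unipotents step; once it is in hand, the equivalence $\mathbb{G}_u = \mathsf{G}_a \Leftrightarrow (\mathsf{G}_m)_{1,\beta y}$ is super-diagonalizable is immediate.
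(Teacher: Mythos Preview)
Your proof is correct and follows essentially the same approach as the paper: both parts are translated into the language of Harish-Chandra pairs, with (1) reduced to checking whether $(1\times\mathsf{G}_m,\mathrm{id}_V)$ is an HCP morphism, and (2) reduced via Corollary~\ref{cor:Dgx_representation}(2) to the triviality of the unipotent radical of $(\mathsf{G}_m)_{1,\beta y}$. Your write-up simply makes explicit a few steps the paper leaves to the reader, notably the uniqueness of the candidate section (via $\mathrm{Hom}(\mathsf{G}_m,\mathsf{G}_a)=0$) in (1) and the identity $\mathbb{G}_u/\mathsf{G}_a=(\mathbb{G}/\mathsf{G}_a)_u$ in (2).
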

\begin{proof} 
(1)\ The two conditions both are equivalent to that the embedding $\mathsf{G}_m = 1 \times \mathsf{G}_m \to G$
and $\mathrm{id}_V$ give a morphism $(\mathsf{G}_m, V) \to (G, V)$ in $\mathsf{HCP}$, where 
in $(\mathsf{G}_m, V)$, $V$ is a trivial right $\mathsf{G}_m$-module and the bracket is determined by 
$[v,v] = 2\beta y$. 

(2)\ The three conditions all are equivalent to that the unipotent radical of  $(\mathsf{G}_m)_{1,\beta y}$ 
is trivial; see Corollary \ref{cor:Dgx_representation} (2).
\end{proof}

We conclude that if $\alpha \beta \ne 0$, then the algebraic supergroup
$\mathbb{G}$ is super-trigonalizable, but $\mathbb{G} \to \mathbb{G}/\mathbb{G}_u$ does not split. 

\subsection{}\label{subsec:counter-example2}
On the other hand one may ask the following.

\begin{Qu}\label{qu:super-diagonalizable}
Is every super-diagonalizable supergroup isomorphic to a direct product of a diagonalizable algebraic group and 
various $D_{g,x}$?
\end{Qu}

The answer is negative again, as will be seen from the following example. 

Let $y$ be an element which spans $\mathrm{Lie}(\mathsf{G}_m)$, as above. 
Let $V\ne 0$ be a trivial $\mathsf{G}_m$-module and $[\ , \ ] : V \times V \to Ky\, (=K)$ 
be a non-degenerate symmetric 
bilinear form. Then $(\mathsf{G}_m, V)$ is a Harish-Chandra pair.
The corresponding algebraic supergroup, say $\mathbb{G}$, is super-diagonalizable,
as is seen from Proposition \ref{prop:characterize_super-diagonalizable}. 

Note that $\mathsf{G}_m$ cannot decompose non-trivially 
into a direct product. The non-degeneracy shows that if $V=V_1\oplus V_2$, $V_1\ne 0$, $V_2\ne 0$ and $[V_1, V_2]=0$,
then $[V_1,V_1]= [V_2,V_2]=Ky$, whence $[V_1,V_1]\cap [V_2,V_2]\ne 0$. 
It follows that the Harish-Chandra pair $(\mathsf{G}_m, V)$
cannot decompose non-trivially into a direct product (in the obvious sense), and so $\mathbb{G}$ cannot, either. 
Therefore, this $\mathbb{G}$ gives a negative answer to the question when $\dim V > 1$. 

\subsection{}\label{sebsec:problem}
The examples given in the last two subsections motivate us to
pose the following. 

\begin{problem}\label{problem}
(1)\ Characterize those super-trigonalizable supergroups for which the quotient morphism 
$\mathbb{G} \to \mathbb{G}/\mathbb{G}_u$ split. 

(2)\ Characterize those super-diagonalizable supergroup which are isomorphic to a direct product
of diagonalizable algebraic group with various $D_{g,x}$. 
\end{problem}

\section{Super-analogue of the Chevalley Decomposition Theorem}\label{sec:splitting_property}

As was announced at the beginning of Section \ref{subsec:counter-example1} we prove a weak form of
the classical Chevalley Decomposition Theorem in the super context. 

\begin{prop}\label{prop:split}
Let $\mathbb{G}$ be an affine supergroup. The quotient morphism $\mathbb{G} \to \mathbb{G}/\mathbb{G}_u$ splits if
\begin{itemize}
\item[(a)] (i)~$\mathrm{char}\, K=0$, and (ii)~$\mathbb{G}/\mathbb{G}_u$ is linearly reductive, or
\item[(b)] (i)~$K$ is an algebraically closed field of $\mathrm{char}\, K > 2$, and 
(ii)~$\mathbb{G}/\mathbb{G}_u$ is purely even and diagonalizable. 
\end{itemize}
\end{prop}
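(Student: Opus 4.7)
The plan is to treat the two cases separately; in both cases the translation to Harish-Chandra pairs via Theorem \ref{thm:equivalence} is the organizing idea, but case (b) is essentially reduced to a classical statement while case (a) requires real work on the odd part.

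For case (b), note that since $\mathbb{G}/\mathbb{G}_u$ is purely even, it coincides with its own even part $(\mathbb{G}/\mathbb{G}_u)_{ev}$. Applying $(-)_{ev}$ to the short exact sequence $1\to \mathbb{G}_u\to \mathbb{G}\to \mathbb{G}/\mathbb{G}_u\to 1$ via \cite[Theorem 5.13 (3)]{mas2} produces a short exact sequence of affine groups
\[
1\to (\mathbb{G}_u)_{ev}\to \mathbb{G}_{ev}\to \mathbb{G}/\mathbb{G}_u\to 1,
\]
where the kernel is unipotent (by Proposition \ref{prop:unipotency}) and the quotient is diagonalizable. The classical fact that every extension of a diagonalizable affine group by a unipotent affine group splits (proved via vanishing of $H^2$ of rational modules over a linearly reductive group, valid over any $K$, in particular over algebraically closed fields of characteristic $>2$) gives a section $s:\mathbb{G}/\mathbb{G}_u\to \mathbb{G}_{ev}$. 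Composing with $\mathbb{G}_{ev}\hookrightarrow\mathbb{G}$ and using that the restriction of $\mathbb{G}\to\mathbb{G}/\mathbb{G}_u$ to $\mathbb{G}_{ev}$ is the map $\mathbb{G}_{ev}\to (\mathbb{G}/\mathbb{G}_u)_{ev}=\mathbb{G}/\mathbb{G}_u$ yields the required section.

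For case (a), I would translate to Harish-Chandra pairs. Write $(G,V)$ for the pair attached to $\mathbb{G}$ and $(G',W')$ for that of $\mathbb{G}_u$, so that the exact sequence of supergroups gives an exact sequence of Harish-Chandra pairs with quotient $(H_{ev},V_H)$, the pair of $H:=\mathbb{G}/\mathbb{G}_u$. In characteristic zero, $G'=(\mathbb{G}_u)_{ev}$ is unipotent and $G/G'=H_{ev}$ is reductive (linearly reductive in char $0$), so $G'$ is precisely the unipotent radical of $G$, and the classical Chevalley–Mostow theorem supplies a Levi subgroup, i.e.\ a section $\sigma:H_{ev}\to G$. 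Via $\sigma$, the short exact sequence $0\to W'\to V\to V_H\to 0$ is an $H_{ev}$-equivariant sequence of finite-dimensional modules over the linearly reductive $H_{ev}$, so it splits $H_{ev}$-equivariantly, giving a section $\tau_0:V_H\to V$ of $H_{ev}$-modules. A morphism $(\sigma,\tau):(H_{ev},V_H)\to(G,V)$ of Harish-Chandra pairs sectioning the quotient would translate back to the desired section of $\mathbb{G}\to H$ by Theorem \ref{thm:equivalence}.

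The main obstacle is arranging the bracket compatibility $[\tau(v),\tau(v')]=\mathrm{Lie}(\sigma)([v,v'])$; with $\tau=\tau_0$ the failure is a symmetric $H_{ev}$-equivariant map $\phi:V_H\otimes V_H\to \mathrm{Lie}(G')$. I would kill $\phi$ by induction on the nilpotency class of $\mathbb{G}_u$ (nilpotent by Proposition \ref{prop:unipotent_nilpotent}): pushing the problem forward along the lower central series $\mathbb{G}_u=\mathbb{G}_u^{(0)}\rhd \mathbb{G}_u^{(1)}\rhd\dots\rhd 1$ reduces the extension $1\to\mathbb{G}_u^{(i)}/\mathbb{G}_u^{(i+1)}\to \mathbb{G}/\mathbb{G}_u^{(i+1)}\to \mathbb{G}/\mathbb{G}_u^{(i)}\to 1$ to a central extension by an abelian unipotent supergroup, hence at the HC-pair level to a linear cocycle problem in which the awkward quadratic term $[\mu(v),\mu(v')]$ vanishes. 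Each step is then handled by the vanishing of the relevant second Hochschild cohomology of $H_{ev}$ with coefficients in a rational $H_{ev}$-module, which holds in characteristic zero precisely because $H_{ev}$ is reductive. Iterating the splittings along the series produces the required section of $\mathbb{G}\to H$; the non-linearity of the bracket condition at a single step is the technical heart, and it is exactly the reduction to abelian central sub-quotients that neutralizes it.
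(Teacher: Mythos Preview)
Your treatment of case (b) is correct and is essentially the paper's argument phrased on the group side rather than the Hopf-algebra side: both reduce to the classical splitting of an extension of a diagonalizable group by a unipotent group, and then push the resulting section through $\mathbb{G}_{ev}\hookrightarrow\mathbb{G}$.

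Your plan for case (a), however, has a genuine gap. As written, it uses only the linear reductivity of $H_{ev}=(\mathbb{G}/\mathbb{G}_u)_{ev}$: the Levi section $\sigma$, the $H_{ev}$-equivariant splitting $\tau_0$, and the claimed vanishing of ``second Hochschild cohomology of $H_{ev}$'' all depend on $H_{ev}$, not on the full supergroup $H=\mathbb{G}/\mathbb{G}_u$. But this would prove strictly more than the proposition asserts, and that stronger statement is false. Take the paper's counter-example in Section~\ref{subsec:counter-example1}: $\mathbb{G}=(\mathsf{G}_a\times\mathsf{G}_m)_{1,\alpha x+\beta y}$ with $\alpha\beta\ne 0$. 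Here $\mathbb{G}_u=\mathsf{G}_a$, $H=(\mathsf{G}_m)_{1,\beta y}$, and $H_{ev}=\mathsf{G}_m$ is linearly reductive, yet $\mathbb{G}\to H$ does not split. In your framework this is exactly the situation where $W'=0$ (so no $\mu$ is available), $\sigma$ is forced, and the obstruction $\phi(v,v)=2\alpha x\in\mathrm{Lie}(\mathsf{G}_a)$ is nonzero with nothing left to kill it. Thus the obstruction to bracket compatibility is \emph{not} governed by rational cohomology of $H_{ev}$ alone; one genuinely needs the hypothesis that $H$ itself is linearly reductive, i.e.\ that $K[H]=\mathrm{Corad}(K[\mathbb{G}])$ is a cosemisimple Hopf superalgebra. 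The paper's proof in case (a) uses this directly: it is a Hopf-algebraic argument via bi-crossed products and a Zorn's lemma maximal-pair setup, with the key input being Lemma~\ref{lem:split} (coseparability of a cosemisimple Hopf superalgebra in $C\text{-}\mathsf{SMod}$), which exploits the full super structure of $C$. A secondary issue is that the Harish-Chandra equivalence of Theorem~\ref{thm:equivalence} is stated for algebraic supergroups, while Proposition~\ref{prop:split} is for arbitrary affine $\mathbb{G}$; the paper's Hopf-algebraic argument handles this uniformly.
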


\begin{rem}\label{rem:split}
Our result in Case (a) is indeed weaker than the classical one,  
in that we have to assume (ii) even under (i), while the classical result on affine groups
proves that (ii) holds under (i). The assumption is indeed necessary since we have many examples
of supergroups, such as the $D_{g,x}$ with $x\ne0$ (see Corollary \ref{cor:Dgx_representation}), 
which have trivial unipotent radical, but are not linearly reductive. 
\end{rem}

\begin{proof}[Proof of Proposition \ref{prop:split} in Case (b)] In this case the result is a direct
consequence of the classical one, as will be seen below. 

Set $C = \mathrm{Corad}(K[\mathbb{G}])$. By Lemma \ref{lem:unipotent_radical} Condition (ii) is equivalent to 
$C$ is spanned by grouplikes, so that it is necessarily a purely even Hopf super-subalgebra of $K[\mathbb{G}]$.
By \cite[Proposition 4.6 (3)]{mas2}
the composite of the inclusion $C \subset K[\mathbb{G}]$ with the
quotient map onto $K[\mathbb{G}_{ev}]$ is an injection, through which $C$ maps onto
$\mathrm{Corad}(K[\mathbb{G}_{ev}])$. Under (i), the classical result 
(see \cite[Theorem 3.3]{mas1}, for example) gives a splitting, say $\pi$, of the injection.
The composite $K[\mathbb{G}] \to K[\mathbb{G}_{ev}] \to C$ of the quotient map with $\pi$ gives a
desired splitting $\mathbb{G}/\mathbb{G}_u \to \mathbb{G}$. 
\end{proof}

A Hopf-algebraic proof of the classical Chevalley Decomposition Theorem is given in \cite{mas1}.  
Our proof of Proposition \ref{prop:split} in Case (a), which starts with preparing the following lemma, 
modifies the cited proof so as to fit in with the super context.  

\begin{lemma}\label{lem:split}
Let $C$ be a Hopf superalgebra which is not necessarily super-commutative. Assume that $C$ is cosemisimple, 
or $C = \mathrm{Corad}(C)$.
\begin{itemize}
\item[(1)] The unit map $u : K \to C,\ u(1) = 1$ splits as a left (or right) $C$-super-comodule map. 
\item[(2)] The coproduct $\Delta : C \to C \otimes C$ of $C$ splits as a $(C, C)$-super-bicomodule map.
\item[(3)] Let $C \subset Z$ is an inclusion
of left $C$-supermodule coalgebras. If $C = \mathrm{Corad}(Z)$, then the inclusion 
$C \hookrightarrow Z$ splits as a left $C$-supermodule coalgebra map. 
\end{itemize} 
\end{lemma}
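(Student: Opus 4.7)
Parts (1) and (2) are standard semisimplicity arguments, and (3) is the main content, built on them via an induction along the coradical filtration.

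For (1), since $C=\mathrm{Corad}(C)$ the category of left $C$-super-comodules is semisimple. The trivial one-dimensional simple comodule $K$, with coaction $k\mapsto 1\otimes k$, embeds into $C$ via $u$ (as $\Delta(1)=1\otimes 1$), so semisimplicity produces a left-comodule retraction. The right-sided version is symmetric.

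For (2), give $C\otimes C$ the $(C,C)$-super-bicomodule structure with left coaction $\Delta_C\otimes \mathrm{id}_C$ and right coaction $\mathrm{id}_C\otimes \Delta_C$. Coassociativity makes $\Delta:C\to C\otimes C$ a $(C,C)$-super-bicomodule map, and $(\varepsilon\otimes \mathrm{id})\circ \Delta=\mathrm{id}_C$ shows it is injective on underlying super-vector spaces, so $C$ sits inside $C\otimes C$ as a sub-bicomodule. The plan is then to invoke semisimplicity of the category of $(C,C)$-super-bicomodules: this category is equivalent to right super-comodules over the tensor product of $C$ with its super-opposite super-coalgebra, and cosemisimplicity of super-coalgebras is preserved both under forming super-opposites and under tensor products, so the resulting enveloping super-coalgebra is cosemisimple. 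The sub-bicomodule inclusion then splits.

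For (3), the plan is to construct $\pi:Z\to C$ inductively along the coradical filtration $Z_0=C\subset Z_1\subset \cdots$ of $Z$, starting from $\pi|_{Z_0}=\mathrm{id}_C$. At the inductive step, the wedge inequality $\Delta_Z(Z_{n+1})\subset Z_n\otimes Z_{n+1}+Z_{n+1}\otimes Z_0$ together with the $(C,C)$-bicomodule splitting from (2) reduces extending $\pi_n$ to a coalgebra map $\pi_{n+1}$ to a retraction problem in the bicomodule category. To simultaneously force left $C$-supermodule equivariance, the construction is performed in the richer category of $(C,C)$-super-bicomodules equipped with a compatible left $C$-super-action; its semisimplicity is inherited from (2), and the supermodule-coalgebra axiom on $Z$ ensures that the $C$-action is itself a bicomodule morphism, so $C$-equivariant splittings exist at each stage.

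\textbf{Main obstacle.} The hardest point is (3): forcing the inductive extension $\pi_{n+1}$ to respect \emph{both} the coalgebra structure and the left $C$-supermodule action simultaneously. The splittings from (1) and (2) take care of the coalgebraic side, but arranging $C$-equivariance along the filtration — and tracking Koszul signs consistently in every bicomodule computation — rests on the interplay between the module-coalgebra axiom on $Z$ and the cosemisimplicity of $C$ at each stage of the induction.
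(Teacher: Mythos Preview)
Your plan is essentially the paper's, though the paper compresses (3) into a single citation rather than running the induction by hand. For (1) the paper argues exactly as you do; for (2) it says ``follows from (1), as in the non-super situation'' (the antipode converts a comodule retraction of $u$ into a bicomodule retraction of $\Delta$) rather than invoking cosemisimplicity of the enveloping super-coalgebra, but both routes are standard. For (3) the paper observes that (2) makes $C$ a \emph{coseparable} coalgebra in the abelian tensor category $C\text{-}\mathsf{SMod}$, and that $C=\mathrm{Corad}(Z)$ forces $Z=\bigcup_n\wedge^n_Z C$; it then invokes Theorem~4.16 of Ardizzoni--Menini--\c{S}tefan, which says precisely that for a coseparable $C$ in such a category the identity on $C$ extends to a coalgebra morphism $Z\to C$ in that category. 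Your coradical-filtration induction is exactly the content of that theorem, so you are reproving it. One caution: the semisimplicity of your enriched bicomodule category is not literally ``inherited from (2)'', since (2) yields only a $K$-linear bicomodule retraction, not a $C$-linear one. The clean fix---and what the paper implicitly does---is to rerun (1) and (2) \emph{inside} $C\text{-}\mathsf{SMod}$: there (1) holds because left $C$-Hopf supermodules form a semisimple category by the fundamental theorem, and (2) then follows, giving coseparability of $C$ in $C\text{-}\mathsf{SMod}$, which is all the inductive step actually needs.
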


Here, a \emph{left $C$-supermodule coalgebra} is a coalgebra-object in the abelian tensor category
of $C\text{-}\mathsf{SMod}$ of left $C$-supermodules. A \emph{left $C$-supermodule coalgebra map}, 
which is by definition a coalgebra morphism in the category, is precisely a left $C$-linear super-coalgebra map. 

\begin{proof}
(1) This follows since the category of left $C$-super-comodules is semisimple, and $u$ is a morphism
in that category. 

(2) This follows from (1), as in the non-super situation.

(3) By (2), $C$ is a coseparable coalgebra in $C\text{-}\mathsf{SMod}$. The assumption 
$C = \mathrm{Corad}(Z)$ implies $Z = \bigcup_n\wedge^n_Z C$; see \cite[Theorem 4.16~(c)]{ams}. 
We can apply the just cited result 
for coseparable coalgebras in an arbitrary abelian tensor category,
to our $C$ in $C\text{-}\mathsf{SMod}$. Then it follows that 
the identity map $C \to C$ extends to a coalgebra morphism $Z \to C$ in $C\text{-}\mathsf{SMod}$.
\end{proof}

\begin{proof}[Proof of Proposition \ref{prop:split} in Case (a)]
Before going into the proof, which consists of four steps, here are two general remarks.

1.\ Given a super-coalgebra $C$ and a superalgebra $R$, all super-linear
maps $C \to R$ form a group, $\mathsf{SMod}_K(C,R)$, with respect to the convolution-product $*$ 
\cite[Page 72]{sw}. By saying that a super-linear map is $*$-\emph{invertible}, we mean that it is invertible
in a relevant group of this sort. 

2.\ A main ingredient of the proof in \cite{mas1} is bi-crossed products. The construction of bi-crossed 
products is directly generalized to our super context. This is based on the fact that Doi and Takeuhi's
results in \cite{dt} on cleft comodule algebras are generalized to the super context, 
or indeed more generally, to the context of braided
category; see \cite{gg}, for example.

\emph{Step 1.}\quad Set 
\[ A = K[\mathbb{G}],\quad C =\mathrm{Corad}(K[\mathbb{G}]). \]
(This $C$ is denoted by $K$ in \cite{mas1}.)
Until the end of Step 3 we only assume (ii), or equivalently, that $C$ is a 
Hopf super-subalgebra of $A$; see Lemma \ref{lem:unipotent_radical}. 
As in \cite[Page 115, lines 17--19]{mas1}, we consider all pairs $(B, \varpi)$ of 
a Hopf super-subalgebra $B\subset A$ including $C$ and a Hopf superalgebra map $\varpi : B \to C$
such that $\varpi|_C =\mathrm{id}_C$, and introduce the obvious order into the set of the pairs. 
By Zorn's Lemma we have a maximal pair $(B,\varpi)$. It suffices to prove $B=A$. 
On the contrary we suppose $B \subsetneqq A$ for a contradiction.

\emph{Step 2.}\quad Let $H=A/B^+A$ be the quotient Hopf superalgebra of $A$, as in \eqref{eq:quotient_Hopf}.
Since $B \subsetneqq A$, we have $K \subsetneqq H$. Note
\begin{equation}\label{eq:CoradH}
K=\mathrm{Corad}(H).
\end{equation} 
Then it follows that $P(H) \ne 0$.
Since $A$ is injective as a right $H$-super-comodule by \cite[Theorem~5.9 (2)]{mas2}, 
the unit map $K\to A$ extends to a right $H$-super-comodule map $H \to A$, 
which is $*$-invertible by \eqref{eq:CoradH}, and can be chosen as to preserve the counit.  
Hence $A$ is presented as a bi-crossed product,
\begin{equation*}
A = B\, {}^{\tau}\hspace{-1mm}\bcp{}_{\sigma} H, 
\end{equation*}
which is constructed on $B \otimes H$ by the trivial action 
$\rightharpoonup : H\otimes B\to B,\ h \rightharpoonup b=\varepsilon_H(h)b$, 
a co-measuring $\rho : H \to H\otimes B$, a (super-symmetric) cocycle $\sigma : H \otimes H \to B$ and
a dual cocycle $\tau : H \to B \otimes B$. All these are super-linear maps,  
and are supposed to be \emph{normalized}, satisfying the condition given in \cite[Page 112, line--5]{mas1} 
and the dual one;
$\sigma$ and $\tau$ are $*$-invertible. 
By saying that $\rho$ is a \emph{co-measuring}, 
we mean that $(H, \rho)$ is a coalgebra object in $\mathsf{SComod}\text{-}B$.
This implies 
\begin{equation}\label{eq:P(H)}
\rho(P(H)) \subset P(H) \otimes B. 
\end{equation}

\emph{Step 3.}\quad Let $m_R$ denote the product on any superalgebra $R$. 
Let $\iota_B$ and $\iota_H$ denote the natural embeddings $B \to B\otimes H=A$ and $H \to B \otimes H=A$,
respectively. Compose $\iota_H\otimes \iota_H$, first with each side of 
\[ \Delta_A\circ m_A = m_{A\otimes A} \circ (\Delta_A \otimes \Delta_A), \]
and then with $(\varepsilon_B\otimes \mathrm{id}_H)\otimes(\mathrm{id}_B\otimes \varepsilon_H)$. Then we have
the equation 
\[ (\iota_B\circ \sigma) * (\rho\circ m_H) = (m_{H\otimes B}\circ(\rho\otimes \rho)) *(\iota_B\circ \sigma) \]
in the group $\mathsf{SMod}_K(H\otimes H, H\otimes B)$; this is analogous to 
the equation given in \cite[Page 115, line--13]{mas1}. It follows that if $J \subset H$ is 
a super-cocommutative Hopf superalgebra, then $\rho|_J : J \to H \otimes B$ is a superalgebra map. 
Suppose that $J$ is the super-cocommutative Hopf superalgebra generated by 
$P(H)\, (\ne 0)$. Then $J \supsetneqq K$, and $\rho(J) \subset J \otimes B$ by \eqref{eq:P(H)}. 

We see that
\[ B':=  B\, {}^{\tau}\hspace{-1mm}\bcp{}_{\sigma} J \]
is a Hopf superalgebra of $A$ which properly includes $B$. Here the associated cocycle and dual cocycle 
are induced from the $\sigma$ and the $\tau$ before, and are denoted by the same symbols. 
Let $I=(\mathrm{Ker}\, \varpi)$
be the Hopf super-ideal of $A$ generated by the Hopf super-ideal $\mathrm{Ker}\, \varpi$ of $B$. 
Since $B/\mathrm{Ker}\, \varpi =C$, we have
\[ B'/I= C\, {}^{\overline{\tau}}\hspace{-1mm}\bcp{}_{\overline{\sigma}} J, \]
where the associated co-measuring $\overline{\rho}$ as well as $\overline{\sigma}$ and $\overline{\tau}$
are naturally induced from $\rho$, $\sigma$ and $\tau$, respectively. Since $\mathrm{Corad}(B'/I)=C$, we can
apply Lemma \ref{lem:split} (3) for $C \subset Z$ to the present $C \subset B'/I$. Then we see that
$\overline{\rho}$, $\overline{\sigma}$ and $\overline{\tau}$ can be re-chosen so that $\overline{\tau}$ 
is trivial, and so $B'/I= C\, \bcp{}_{\overline{\sigma}} J$ is, as a coalgebra-object in $C\text{-}\mathsf{SMod}$, 
the smash coproduct constructed by the re-chosen $\overline{\rho} : J \to J \otimes C$. 

\emph{Step 4.}\quad Assume (i), or $\mathrm{char}\, K=0$. 
Then $J$ is, as a superalgebra, the tensor product 
$\mathrm{Sym}(P(H)_0)\otimes \wedge(P(H)_1)$ of the symmetric algebra on $P(H)_0$ and the exterior algebra 
on $P(H)_1$. The natural embedding $P(H) \to J \to C\, \bcp{}_{\overline{\sigma}} J=B'/I$ uniquely extends
to a superalgebra map $\phi : J \to B'/I$, which is a right $J$-comodule map since the restriction
$\phi|_{J'}$ to $J':=K \oplus P(H)$ is such. 
Moreover, $\phi$ is $*$-invertible since the restriction to $K=\mathrm{Corad}(J)$ is so.
It follows from the super-analogue of Doi and Takeuchi's Theorem 
\cite[Theorem 9]{dt} (see \cite[Theorem 10.6]{gg}) that 
\[ C \otimes J \to B'/I,\quad c \otimes a \mapsto c \, \phi(a) \] 
is an isomorphism of (right $J$-comodule) superalgebras over $C$. If we regard the domain $C \otimes J$
as the smash-coproduct super-coalgebra $C \cmdblackltimes J$ constructed by $\overline{\rho}$, then 
this last is an isomorphism of Hopf superalgebras, since the restriction to $C\otimes J'$ is
obviously a super-coalgebra map. The Hopf super-subalgebra $B'\subset A$, together with the composite of
$B' \to B'/I = C \cmdblackltimes J$ with 
$\mathrm{id}_C \otimes \varepsilon_J: C \cmdblackltimes J\to C$, gives a pair
which is properly bigger than $(B, \varpi)$; this contradicts the maximality of the last pair,
and completes the proof. 
\end{proof}

\appendix
\section{On smooth superalgebras}\label{appendix}

We continue to work over a field $K$ of characteristic $\ne 2$, unless otherwise stated. 
All superalgebras (over $K$) are assumed to be super-commutative, and 
$\mathsf{SAlg}_K$ denotes the category those superalgebras, as before. 

\subsection{Smooth superalgebras}\label{subsec:smooth_superalgebras}
Given a (purely even) algebra $R$ and a module $M$ over $R$, $\wedge_R(M)$ denotes 
the exterior algebra on $M$. This is graded by $\mathbb{N}=\{ 0,1,\dots \}$, and so 
$\wedge_R(M) \in  \mathsf{SAlg}_K$ with the $\mathbb{N}$-grading modulo $2$. 

Let $A \in \mathsf{SAlg}_K$.

We say that $A$ is \emph{smooth over} $K$, 
if given $B \in \mathsf{SAlg}_K$ and a nilpotent (or equivalently, square-zero)
super-ideal $J \subset B$,
the natural map $\mathsf{SAlg}_K(A,B)\to \mathsf{SAlg}_K(A,B/J)$ is a surjection.
A familiar argument using pull-back shows that the condition is 
equivalent to that in $\mathsf{SAlg}_K$, every surjection 
onto $A$ with nilpotent (or equivalently, square-zero) kernel splits. 

We say that $A\, (=A_0 \oplus A_1)$ is \emph{Noetherian} if 
the following conditions, which are easily seen to be 
equivalent to each other, are satisfied:
\begin{itemize}
\item[(a)] The super-ideals of $A$ satisfy the ACC;
\item[(b)] The ring $A_0$ is Noetherian, and the $A_0$-algebra $A$ is generated by
finitely many odd elements;
\item[(c)] $A_0$ is Noetherian, and the $A_0$-module $A_1$ is finitely generated. 
\end{itemize}

Define 
\[ I_A=(A_1),\quad \overline{A}=A/I_A. \]
Thus $\overline{A}$ is the largest purely even quotient of $A$.  
Given a positive integer $n$, we have
\[ I_A^n= \begin{cases} A_1^{n+1} \oplus A_1^n & n~~\text{odd},\\
A_1^n \oplus A_1^{n+1} & n~~\text{even}, \end{cases} \]
whence 
$I_A^n/I_A^{n+1}= A_1^n/A_1^{n+2}$; this is an $\overline{A}$-module, which is purely even (resp., odd)
if $n$ is even (resp., odd). 
We define a graded algebra over $\overline{A}$ by 
\[ \mathsf{gr}(A) = \bigoplus_{n\ge 0} I_A^n/I_A^{n+1} =\overline{A} \oplus I_A/I_A^2 \oplus \dots . \]
We let
\[ \kappa_A : \wedge_{\overline{A}}(I_A/I_A^2) \to \mathsf{gr}(A) \]
denote the graded $\overline{A}$-algebra map induced by the embedding $I_A/I_A^2 \to \mathsf{gr}(A)$.
Obviously this is a surjection. If the $A_0$-module $A_1$ is generated by $s$ elements, then $A_1^{s+1}=0$,
whence $\wedge_{\overline{A}}^n(I_A/I_A^2) = 0= \mathsf{gr}(A)(n)$ for all $n > s$. This is the case
for some $s$, if $A$ is Noetherian. 

Suppose that $A$ is Noetherian. Following Schmitt \cite[Section 3.3, Page 79]{schmitt} 
we define:

\begin{definition}[T.~Schmitt \cite{schmitt}]\label{def:regular}
$A$ is said to be \emph{regular} if
\begin{itemize}
\item[(1)] the Noetherian ring $\overline{A}$ is regular, 
\item[(2)] the finite generated $\overline{A}$-module $I_A/I_A^2 \, (=A_1/A_1^3)$ is projective, and 
\item[(3)] $\kappa_A$ is an isomorphism. 
\end{itemize}
\end{definition}
See Section \ref{subsec:comparison} below for some remarks on this definition. 

We say that $A$ is \emph{geometrically regular over} $K$,
if for every finite field extension $L/K$, the base extension $A\otimes L$, which is Noetherian,
is regular.

\begin{theorem}\label{thm:smooth}
Let $A$ be a Noetherian superalgebra over a field $K$. Then the following (a)--(d) are equivalent to each other:
\begin{itemize}
\item[(a)] $A$ is smooth over $K$;
\item[(b)] $A$ is regular, and the algebra $\overline{A}$ is smooth over $K$;
\item[(c)] $A$ is geometrically regular over $K$;
\item[(d)]
\begin{itemize}
\item[(i)] The algebra $\overline{A}$ is smooth over $K$, 
\item[(ii)] the $\overline{A}$-module $I_A/I_A^2$ is projective, and 
\item[(iii)] there is an isomorphism
$\wedge_{\overline{A}}(I_A/I_A^2) \overset{\simeq}{\longrightarrow} A$ of superalgebras. 
\end{itemize}
\end{itemize}

If $K$ is a perfect field, the equivalent conditions above are equivalent to
\begin{itemize}
\item[(e)] $A$ is regular. 
\end{itemize}
\end{theorem}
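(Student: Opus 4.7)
The plan is to establish the structural core (a) $\Leftrightarrow$ (d), derive (b) by comparing the exterior filtration on $\wedge_{\overline{A}}(M)$ with the $I_A$-adic filtration on $A$, and then obtain (c) and (e) from the classical smooth-equals-geometrically-regular theorem in commutative algebra applied to $\overline{A}$. I first observe that since $A$ is Noetherian, $A_1$ is finitely generated over $A_0$, so $A_1^{s+1}=0$ for some $s$, and hence $I_A$ is nilpotent; this makes every filtration in sight finite, so ``iso on associated graded'' upgrades to ``iso''.

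For (d) $\Rightarrow$ (a), I use the universal property of the exterior algebra: a morphism $\wedge_{\overline{A}}(M) \to C$ in $\mathsf{SAlg}_K$ is the data of an algebra map $\overline{A} \to C_0$ together with an $\overline{A}$-linear map $M \to C_1$, because the defining relation $x^2=0$ on odd $x$ is automatic in super-commutative $C$ with $\mathrm{char}\,K \ne 2$. A lifting test against $B \twoheadrightarrow B/J$ then splits into an even part (solved by smoothness of $\overline{A}$) and an odd part (solved by projectivity of $M$). For (a) $\Rightarrow$ (d)(i), smoothness of $\overline{A}$ follows by composing a test lift through $A \twoheadrightarrow \overline{A}$: smoothness of $A$ lifts the composite to a purely even target, which automatically factors through $\overline{A}$. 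For (d)(ii), choose a free $\overline{A}$-module cover $P \twoheadrightarrow M$ and form the trivial extension $\overline{A} \ltimes P \twoheadrightarrow \overline{A} \ltimes M \cong A/I_A^2$ with square-zero odd kernel; smoothness of $A$ lifts $A \to A/I_A^2$ to $A \to \overline{A} \ltimes P$, and since any product of two odd elements in $\overline{A} \ltimes P$ is zero, the odd component $A_1 \to P$ vanishes on $A_1^3$ and descends to a section $M \hookrightarrow P$ of $P \twoheadrightarrow M$, exhibiting $M$ as projective.

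For (d)(iii), combine the splitting $\sigma : \overline{A} \hookrightarrow A$ (lifting $\mathrm{id}_{\overline{A}}$ along $A \twoheadrightarrow \overline{A}$ by smoothness, since $I_A$ is nilpotent) with the section $s : M \hookrightarrow A_1$ just obtained, to build a superalgebra map $\phi : \wedge_{\overline{A}}(M) \to A$ which is filtration-preserving (exterior filtration on the source, $I_A$-filtration on the target) and whose associated graded is precisely $\kappa_A$. The main obstacle is proving $\kappa_A$ is an isomorphism. My approach would be to iterate the square-zero trick degree by degree: use a free cover $P \twoheadrightarrow M$ together with the smooth model $\wedge_{\overline{A}}(P)$ (smooth by the already-proved (d) $\Rightarrow$ (a)), and lift $A \to A/I_A^{n+1}$ inductively through truncations of $\wedge_{\overline{A}}(P)$ to split each $I_A^n/I_A^{n+1}$ off as $\wedge_{\overline{A}}^n(M)$; combined with the automatic surjectivity of $\kappa_A$, this forces the map to be iso. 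A cleaner alternative is to localize at closed points of $\mathrm{Spec}\,\overline{A}$ and invoke Schmitt's local regularity criterion from \cite{schmitt}, reducing to a Koszul-type computation over a regular local ring with $M$ free.

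Once $\kappa_A$ is known to be iso, (d) $\Rightarrow$ (b) is tautological because on $\wedge_{\overline{A}}(M)$ the exterior filtration reproduces the exterior algebra on its associated graded; conversely (b) $\Rightarrow$ (d) uses the same $\phi$, built from the smoothness-given splitting and the projectivity-given section, whose iso associated graded $\kappa_A$ forces $\phi$ iso by finite filtration. For (b) $\Leftrightarrow$ (c), invoke the classical theorem that a Noetherian commutative $K$-algebra is smooth over $K$ iff geometrically regular, applied to $\overline{A}$, together with the observation that $\overline{A}$, $M=I_A/I_A^2$, $\kappa_A$, and $\wedge_{\overline{A}}$ all commute with base extension and that projectivity and regularity survive base change. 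Finally, under perfect $K$, regularity coincides with geometric regularity for Noetherian commutative $K$-algebras, so (c) $\Leftrightarrow$ (e).
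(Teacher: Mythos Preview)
Your approach is essentially the paper's, with the same skeleton: (d) $\Rightarrow$ (a) by the universal property of the exterior algebra, (a) $\Rightarrow$ (d) via a splitting $\sigma:\overline{A}\to A_0$ and a lift into an exterior-algebra model, and (b) $\Leftrightarrow$ (c) $\Leftrightarrow$ (e) by base-change plus the classical smooth-equals-geometrically-regular theorem for $\overline{A}$. Two points deserve comment.

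First, your treatment of the ``main obstacle'' ($\kappa_A$ iso) is more complicated than necessary. You already have the surjection $\phi:\wedge_{\overline{A}}(M)\to A$ whose associated graded is $\kappa_A$; its kernel lies in positive exterior degree and is therefore nilpotent. So smoothness of $A$ applies to $\phi$ in \emph{one step}, yielding a section $h:A\to \wedge_{\overline{A}}(M)$. Then $\mathsf{gr}(h)$ is a graded $\overline{A}$-algebra section of $\kappa_A$ which is the identity in degrees $0$ and $1$; since $\wedge_{\overline{A}}(M)$ is generated in degree $\le 1$, $\mathsf{gr}(h)$ is surjective, hence $\kappa_A$ is an isomorphism. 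This is exactly what the paper does (with a projective cover $P$ of $M$ in place of $M$, which simultaneously yields projectivity of $M$ as a byproduct). There is no need to iterate through truncations or to localize and invoke Schmitt.

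Second, a small slip: in (d)(iii) you write ``the section $s:M\hookrightarrow A_1$ just obtained,'' but what you obtained in (ii) was a section $M\hookrightarrow P$ of the free cover, not into $A_1$. You need one extra line: having $\sigma:\overline{A}\to A_0$, view $A_1$ as an $\overline{A}$-module via $\sigma$; then the surjection $A_1\to A_1/A_1^3=M$ is $\overline{A}$-linear and splits by the projectivity of $M$ established in (ii). The paper's (b) $\Rightarrow$ (d) step does exactly this.
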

\begin{proof}
First, assume that $K$ is an arbitrary field. 

(a) $\Rightarrow$ (b).\
Assume (a). Given a surjection $B \to C$ of algebras with nilpotent kernel, every
algebra map $\overline{A} \to C$, identified with a map $A \to C$ of superalgebras, can lift to
$\overline{A}\to B$. Therefore, $\overline{A}$ is smooth over $K$, which is necessarily 
regular; see \cite[Corollary 9.3.13]{weibel}. 

It remains to verify (2) and (3) in Definition \ref{def:regular}. 
By the smoothness just proved we have a section 
$i : \overline{A} \to A_0$ of $A_0 \to A_0/A_1^2 = \overline{A}$. Through this $i$ we regard $\overline{A}$
as a subalgebra of $A_0$. Let $f : P \to A_1$ be an $\overline{A}$-linear map whose composite with 
$A_1 \to A_1/A_1^3= I_A/I_A^2$
gives a projective cover of the $\overline{A}$-module $I_A/I_A^2$; $P$ is thus a finitely generated
projective $\overline{A}$-module. 
This $f$ uniquely extends to a map $\widetilde{f} : \wedge_{\overline{A}}(P) \to A$ of superalgebras
over $\overline{A}$. Since this $\widetilde{f}$ preserves the projections onto $\overline{A}$, it 
induces the graded superalgebra map 
\[ \mathsf{gr}(\widetilde{f}) : \wedge_{\overline{A}}(P) \to \mathsf{gr}(A) \]
which is identical in degree $0$, 
\begin{equation}\label{eq:degree_zero}
\mathsf{gr}(\widetilde{f})(0)=\mathrm{id}_{\overline{A}}. 
\end{equation}
Since this is a surjection, $\widetilde{f}$ is, too. 
One sees from \eqref{eq:degree_zero} that the kernel
$\mathrm{Ker}(\widetilde{f})$ of $\widetilde{f}$ is nilpotent. Therefore, $\widetilde{f}$ has a section
$h : A \to \wedge_{\overline{A}}(P)$, whence $\mathsf{gr}(h) : \mathsf{gr}(A)\to 
\wedge_{\overline{A}}(P)$ is a section of $\mathsf{gr}(\widetilde{f})$, such that 
$\mathsf{gr}(h)(0)=\mathrm{id}_{\overline{A}}$. 
Note that 
$\mathsf{gr}(\widetilde{f})(1) : P \to I_A/I_A^2$ coincides with the projective cover above,
and has $\mathsf{gr}(h)(1)$ as an $\overline{A}$-linear section. Therefore,
$\mathsf{gr}(\widetilde{f})(1)$ and $\mathsf{gr}(h)(1)$ are inverses to each other. This implies (2) 
and that $\mathsf{gr}(\widetilde{f})$ is identified with $\kappa_A$. 
Since the $\overline{A}$-algebra $\mathsf{gr}(A)$
is generated by the first component,
$\mathsf{gr}(h)$ is a surjection. This implies that $\mathsf{gr}(\widetilde{f})\, (=\kappa_A)$ is an isomorphism,
proving (3). 

(b) $\Rightarrow$ (d).\
Assume (b). Then we have (i) and (ii) of (d). It remains to prove (iii). 

By (i) the algebra surjection $A_0 \to A_0/A_1^2 = \overline{A}$ again has a splitting
$i$, through which we again regard $\overline{A}$ as a subalgebra of $A_0$. 
By (ii) the $\overline{A}$-linear surjection $(I_A)_1=A_1 \to A_1/A_1^3=I_A/I_A^2$ has a section, say $j$.
This $j$ uniquely extends to a map 
\[ \widetilde{j} : \wedge_{\overline{A}}(I_A/I_A^2)\to A \]
of $\overline{A}$-superalgebras.
Just as the $\widetilde{f}$ above, this $\widetilde{j}$ induces a graded superalgebra map
\[ \mathsf{gr}(\widetilde{j}) : \wedge_{\overline{A}}(I_A/I_A^2) \to \mathsf{gr}(A)  \]
such that $\mathsf{gr}(\widetilde{j})(0) =\mathrm{id}_{\overline{A}}$. 
Since one sees that $\mathsf{gr}(\widetilde{j})(1)$ is the identity on $I_A/I_A^2$, 
it follows that $\mathsf{gr}(\widetilde{j})$ coincides with $\kappa_A$, and is an isomorphism 
by (b). Hence $\widetilde{j}$ is the desired isomorphism.

(d) $\Rightarrow$ (a).\  Assume (d).   
By (ii) and (iii) we have: 
(iv)~$A_0$ includes a subalgebra $R$ which is isomorphic to $\overline{A}$,\   
(v)~$A_1$ includes an $R$-submodule $P$ which is finitely generated projective, and 
(vi)~the embedding 
$P \to A$ extends to an isomorphism $\wedge_R(P) \overset{\simeq}{\longrightarrow} A$ of 
$R$-superalgebras.

To show (a) we wish to prove that a surjection 
$q : B \to A$ in $\mathsf{SAlg}_K$ with nilpotent kernel splits. 
Since $q_0$ restricts to the surjection $q_0^{-1}(R) \to R$ of algebras with nilpotent
kernel, it splits by (i). Hence we may suppose $R \subset B_0$ is a subalgebra, and $q$ is an 
$R$-superalgebra map. Since $q_1$ restricts to the $R$-linear surjection 
$q_1^{-1}(P) \to P$ onto the projective module, 
it has a section. 
This section uniquely extends to a map $A = \wedge_{R}(P)\to B$ of $R$-superalgebras, 
which is the desired section.

(b) $\Leftrightarrow$ (c). Let $L/K$ be a field extension. Then
$I_{A\otimes L}/I_{A\otimes L}^2=(I_A/I_A^2)\otimes L$ is projective over 
$\overline{A\otimes L}=\overline{A}\otimes L$ if (and only if) $I_A/I_A^2$ is $\overline{A}$-projective. 
Since $\kappa_{A \otimes L}$ is identified with
the base extension $\kappa_A \otimes \mathrm{id}_L$ of $\kappa_A$ to $L$, 
it follows that $\kappa_{A \otimes L}$ is an isomorphism if (and only if) $\kappa_A$ is. 
Therefore, (c) is equivalent to
\begin{itemize}
\item[(c)$'$] $A$ is regular,
and $\overline{A}$ is geometrically regular over $K$.
\end{itemize} 
The desired equivalence follows from  
the known one 
\cite[Chapter 0, Theorem 22.5.8]{EGA}: 
$\overline{A}$ is smooth 
over $K$ $\Leftrightarrow$ $\overline{A}$ is geometrically regular over $K$. 

To complete the proof assume that $K$ is perfect. 
Then $\overline{A}$ is regular if and only if it is
geometrically regular over $K$; see \cite[(28.N), Page 208]{mats}, for example. 
Thus we have (e) $\Leftrightarrow$ (c)$'$ $\Leftrightarrow$ (c). 
\end{proof}

\subsection{Smooth Hopf superalgebras}\label{subsec:smooth_Hopf}
Suppose that $A$ is a (super-commutative) 
Hopf superalgebra over $K$ which is not necessarily finitely generated.
Define $W^A=A_1/A_0^+A_1$, where $A_0^+=A_0 \cap A^+$. If $\mathbb{G}$ denotes the
affine supergroup represented by $A$, then 
$\overline{A} =A/(A_1)$ is the Hopf algebra which represents $\mathbb{G}_{ev}$ 
(see Section \ref{subsec:intro1}), and $W^A$ is the odd component of the cotangent 
space of $\mathbb{G}$ at $1$. This $W^A$ and the odd component $\mathrm{Lie}(\mathbb{G})_1$
of the Lie superalgebra of $\mathbb{G}$ are dual to each other, 
if $\mathbb{G}$ is an algebraic supergroup. 
By \cite[Theorem 4.5]{mas1} we
have a (counit-preserving) isomorphism of (left $\overline{A}$-comodule) superalgebras
\begin{equation}\label{eq:decomposition}
A \simeq \overline{A} \otimes \wedge(W^A).  
\end{equation}
This is the same result as referred to in Remark \ref{rem:quasi-inverse}, in which
$\mathbb{G}$ was supposed to be algebraic.

\begin{prop}\label{prop:smooth_Hopf}
A Hopf superalgebra $A$ is smooth over $K$ if and only if the Hopf algebra 
$\overline{A}$ is smooth over $K$.
The equivalent conditions are satisfied, 
if the characteristic $\mathrm{char}\, K$ of $K$ is zero,
and if $\overline{A}$ is finitely generated. 
\end{prop}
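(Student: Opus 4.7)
The plan is to prove the equivalence directly from the lifting definition of smoothness in $\mathsf{SAlg}_K$, using the decomposition \eqref{eq:decomposition}: $A \simeq \overline{A}\otimes \wedge(W^A)$ as superalgebras. This realises $\overline{A}$ as a sub-superalgebra of $A$, and identifies $W^A$ with a $K$-subspace of $A_1$ such that $A$ is freely generated as a super-commutative superalgebra over $\overline{A}$ by $W^A$.

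For the ``only if'' direction, I would take a surjection of commutative algebras $q : B\to C$ with nilpotent kernel and an algebra map $\psi : \overline{A}\to C$, and regard $B$ and $C$ as purely even objects of $\mathsf{SAlg}_K$. The composite $\psi\circ (A\twoheadrightarrow \overline{A}) : A\to C$ is a superalgebra map, which, by the assumed smoothness of $A$, lifts to some $\tilde{\phi} : A\to B$. Since $B$ is purely even, $\tilde{\phi}$ annihilates $A_1$ and hence descends to an algebra map $\overline{A}\to B$ lifting $\psi$.

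For the ``if'' direction, given a super-surjection $q : B\to C$ in $\mathsf{SAlg}_K$ with nilpotent kernel and a superalgebra map $\phi : A\to C$, I would use the decomposition to present $\phi$ as a pair $(\phi_0,\phi_1)$, where $\phi_0 : \overline{A}\to C_0$ is an algebra map and $\phi_1 : W^A\to C_1$ is $K$-linear. The smoothness of $\overline{A}$ lifts $\phi_0$ to some $\tilde{\phi}_0 : \overline{A}\to B_0$; the induced surjection $B_1\twoheadrightarrow C_1$ together with the projectivity of the $K$-vector space $W^A$ lifts $\phi_1$ to some $\tilde{\phi}_1 : W^A\to B_1$. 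Extending using the super-commutativity of $B$ (and $\mathrm{char}\, K\ne 2$, so that $b^2=0$ for any odd $b\in B$) yields a well-defined superalgebra map $\tilde{\phi} : A = \overline{A}\otimes \wedge(W^A) \to B$ lifting $\phi$; the exterior-algebra relations on $W^A$ impose no additional constraint.

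For the concluding assertion under $\mathrm{char}\, K = 0$ with $\overline{A}$ finitely generated, I would invoke Cartier's theorem, which asserts that every finitely generated commutative Hopf algebra over a field of characteristic zero is reduced; since $K$ is then perfect and $\overline{A}$ is reduced of finite type, it is geometrically regular and hence smooth. The main point requiring care is verifying that the two independent lifts $\tilde{\phi}_0$ and $\tilde{\phi}_1$ in the ``if'' direction actually combine into a superalgebra map on $\overline{A}\otimes \wedge(W^A)$; once the super-commutativity of $B$ and the $\mathrm{char}\, K\ne 2$ hypothesis are exploited, this reduces to a routine check, and no appeal to Theorem \ref{thm:smooth} is needed.
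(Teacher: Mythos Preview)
Your argument for the equivalence is correct and coincides with the paper's: both directions rest on the decomposition $A \simeq \overline{A}\otimes \wedge(W^A)$, with the ``only if'' direction passing to purely even targets and the ``if'' direction lifting the even part and the odd generators separately. The paper phrases the latter via the splitting formulation of smoothness, referring back to its proof of (d)~$\Rightarrow$~(a) in Theorem~\ref{thm:smooth}, but the substance is identical to what you wrote.

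There is, however, a small gap in your final sentence. The implication ``reduced of finite type over a perfect field $\Rightarrow$ geometrically regular'' is false in general: a nodal cubic is reduced but singular at the node. To pass from reducedness of $\overline{A}$ to smoothness you must use the Hopf structure, e.g.\ via the homogeneity of the group scheme $\mathrm{Spec}\,\overline{A}$ (translation carries any closed point to any other, so regularity at one point---guaranteed generically by reducedness over a perfect field---propagates to every point). Alternatively, simply quote Cartier's theorem in its stronger form, that finitely generated commutative Hopf algebras in characteristic zero are smooth, as the paper does by citing \cite[Page~239]{dg}.
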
    
\begin{proof}
We refer to the proof of Theorem \ref{thm:smooth} above. 

``Only if."\ This follows easily, as was seen in the proof of (a) $\Rightarrow$ (b).  

``If."\ This follows if one argues as proving (d) $\Rightarrow$ (a), replacing the
isomorphism in (iii) of (d) with \eqref{eq:decomposition}.  

The last statement follows, since it is known that every finitely generated commutative Hopf algebra in characteristic zero
is smooth; see \cite[Page 239]{dg}.
\end{proof}

In view of Theorem \ref{thm:smooth}, Proposition \ref{prop:smooth_Hopf} generalizes 
Corollary 4.3 of Fioresi \cite{fioresi},
which essentially proves that a finitely generated Hopf superalgebra over the field $\mathbb{C}$ of
complex numbers is regular; see also the following subsection.

\subsection{Comparison with smoothness defined by Fioresi \cite{fioresi}}\label{subsec:comparison} 
Let $A=A_0\oplus A_1$ be a super-ring, that is, a $\mathbb{Z}_2$-graded ring. We say that $A$ 
is \emph{super-commutative} if (1)~the subring $A_0$ is central in $A$, and (2)~$a^2=0$ for all $a \in A_1$.
If $2$ is invertible in $A_0$, or in particular, 
if $A$ is a superalgebra over a field of characteristic $\ne 2$, then
the definition coincides with the usual super-commutativity that only requires $ab=-ba$ for all $a, b\in A_1$,
instead of (2).  
In what follows all super-rings are supposed to be super-commutative.
In fact, Schmitt \cite{schmitt} gave the definition of regularity for those super-rings,
more generally than was reproduced as Definition \ref{def:regular} above. 
Note that this definition in Section \ref{subsec:smooth_superalgebras} 
is valid for super-rings, since so is the construction of the map $\kappa_A$
as well as the argument on the Noetherian property.

As a crucial ingredient Schmitt introduced the notion 
of regular sequences to the super context, and proved that a Noetherian local super-ring $A$ is
regular if and only if the maximal super-ideal of $A$ is generated by the elements in a regular sequence;
see \cite[Theorem on Page 79]{schmitt}.  
The authors' article \cite{maszub2} in preparation will present 
an alternative approach to regular super-rings, and refine some of  
the following arguments. 

Recall from \cite[Page 66]{schmitt}, for example, that
a prime (resp., maximal) super-ideal $\mathfrak{p}$ of a super-ring $A$ is of the form 
$\mathfrak{p}_0\oplus A_1$, where $\mathfrak{p}_0$ is a prime (resp., maximal) ideal of the ring $A_0$. 
Therefore, $A$ is local if and only if $A_0$ is. Let $A$ be a Noetherian local super-ring
with maximal $\mathfrak{m}$. Then it follows 
from Definition \ref{def:regular} and the corresponding result in the non-super situation
that $A$ is regular if and only if the localization $A_{\mathfrak{p}_0}$ 
at every prime/maximal $\mathfrak{p}_0$ of $A_0$ is regular. 
We see from the last cited Theorem by Schmitt that $A$ 
is regular if and only if the $\mathfrak{m}$-adic completion 
$\widehat{A}$ of $A$ is regular.

Suppose that a regular local super-ring $A$ is a superalgebra over a field, or in other words,
$A_0$ includes a subfield; this is equivalent
to saying that the characteristics of the two rings $A_0$, $\overline{A}=A/(A_1)$ and of the residue 
field of $A_0$ all coincide. In this case the structure of $A$ is more restrictive. 
Let $\mathfrak{m}$ be the maximal super-ideal of $A$, and let $K=A/\mathfrak{m}\, (=A_0/\mathfrak{m}_0)$
be the residue field. 
Schmitt \cite[Proposition on Page 81]{schmitt} shows that
if $A$ is complete with respect to the $\mathfrak{m}$-adic topology, then it is
isomorphic to the \emph{formal power series superalgebra} over $K$, 
\[ K[[X_1,\dots, X_r]]\otimes_K \wedge(Y_1,\dots, Y_s), \]
which is the formal power series algebra in the (even) variables $X_1,\dots,X_r$ 
tensored with the free superalgebra on the set of the odd variables $Y_1,\dots, Y_s$.

Fioresi \cite[Definition 3.1]{fioresi} says that an algebraic super-variety $X$ over the field
$\mathbb{C}$ of complex numbers is \emph{smooth}, if for every closed point $P$ of $X$, 
the completion $\widehat{\mathcal{O}}_{X,P}$ of the local superalgebra $\mathcal{O}_{X,P}$ at $P$
is isomorphic to a formal power series superalgebra over $\mathbb{C}$. 
One now sees that the condition is equivalent
to saying that for every $P$,\ $\mathcal{O}_{X,P}$ is regular. 
This together with Theorem \ref{thm:smooth} prove the following proposition; 
it answers a question posed to
an earlier version of this paper. 

\begin{prop}\label{prop:compare_with_Fioresi}
An affine algebraic super-variety $X$ over $\mathbb{C}$ is smooth in the sense of Fioresi \cite{fioresi}
if and only if the coordinate superalgebra $\mathcal{O}_X$ is smooth over $\mathbb{C}$. 
\end{prop}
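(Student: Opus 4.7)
The plan is to chain together the equivalences already assembled in the preceding subsection, together with Theorem~\ref{thm:smooth}, so that Fioresi's pointwise condition on completions matches up with global smoothness of the coordinate superalgebra $\mathcal{O}_X$.

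First I would invoke Theorem~\ref{thm:smooth}: since $\mathbb{C}$ is perfect, the equivalence (a)~$\Leftrightarrow$~(e) reduces smoothness of the Noetherian superalgebra $\mathcal{O}_X$ over $\mathbb{C}$ to regularity of $\mathcal{O}_X$ in the sense of Definition~\ref{def:regular}. Next, using the observation recorded in Section~\ref{subsec:comparison} that regularity of a Noetherian super-ring is detected by regularity of all localizations $A_{\mathfrak{p}_0}$ at prime (equivalently, maximal) ideals $\mathfrak{p}_0$ of $A_0$, together with the Nullstellensatz over the algebraically closed field $\mathbb{C}$ (closed points of $X$ correspond to maximal ideals of $\mathcal{O}_X$, and regularity at all maximal ideals forces regularity at all primes via standard localization), I would conclude that $\mathcal{O}_X$ is regular if and only if $\mathcal{O}_{X,P}$ is a regular local superalgebra for every closed point $P$.

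Next I would pass from $\mathcal{O}_{X,P}$ to its completion. The result of Schmitt cited in Section~\ref{subsec:comparison}, that a Noetherian local super-ring $A$ is regular if and only if the $\mathfrak{m}$-adic completion $\widehat{A}$ is regular, reduces the question to regularity of each $\widehat{\mathcal{O}}_{X,P}$. Finally, Schmitt's structural result (Proposition on Page~81 of \cite{schmitt}) tells us that a complete Noetherian local super-ring whose even part contains a subfield mapping isomorphically to its residue field is regular precisely when it is isomorphic to a formal power series superalgebra over that residue field. Since $P$ is a closed point of an affine super-variety over the algebraically closed $\mathbb{C}$, the residue field is $\mathbb{C}$, and the even part contains $\mathbb{C}$, so this equivalence applies: $\widehat{\mathcal{O}}_{X,P}$ is regular if and only if it is isomorphic to a formal power series superalgebra over $\mathbb{C}$, which is exactly Fioresi's condition at $P$.

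Concatenating the chain gives $\mathcal{O}_X$ smooth over $\mathbb{C}$ $\Leftrightarrow$ $\mathcal{O}_X$ regular $\Leftrightarrow$ $\mathcal{O}_{X,P}$ regular for all closed $P$ $\Leftrightarrow$ $\widehat{\mathcal{O}}_{X,P}$ regular for all closed $P$ $\Leftrightarrow$ $X$ smooth in the sense of Fioresi. The main obstacle, which the proof really turns on, is verifying the local-global step: that regularity of the finitely generated superalgebra $\mathcal{O}_X$ is genuinely detected at the closed points. This requires combining the reduction of regularity to localizations (using that projectivity of $I_A/I_A^2$ over $\overline{A}$ and bijectivity of $\kappa_A$ are local properties) with the classical fact that in a finitely generated algebra over an algebraically closed field, the reduced structure is controlled at maximal ideals, so that no subtlety arises at non-closed primes. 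Once that is in place, the remaining steps are direct applications of the Schmitt results, and Theorem~\ref{thm:smooth} supplies the rest.
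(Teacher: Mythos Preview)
Your argument is correct and follows essentially the same route as the paper: the paper also reduces Fioresi's condition to regularity of each $\mathcal{O}_{X,P}$ via Schmitt's results on completions and the structure of complete regular local super-rings, and then invokes Theorem~\ref{thm:smooth} (over the perfect field $\mathbb{C}$) together with the local characterization of regularity to conclude. You have simply spelled out in more detail the local--global step (closed points suffice via the Nullstellensatz) that the paper leaves implicit.
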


Let us consider the following natural question. 

\begin{Qu}\label{qu:regular_slpit}
Is every regular superalgebra $A$ over a field $K$ isomorphic 
to $\wedge_{\overline{A}}(I_A/I_A^2)$?
\end{Qu}

From Theorem \ref{thm:smooth} and Schmitt's \cite[Proposition on Page 81]{schmitt} cited above, 
one sees that
if $A$ is smooth over $K$ or complete regular local, then $A \simeq \wedge_{\overline{A}}(I_A/I_A^2)$.  
However, we will show that this is not true in general, constructing a counter-example.

Let $R$ be a Noetherian commutative algebra over a field $K$, and let $N=Rz$ be a
free $R$-module with basis $z$. Suppose that $E$ is a commutative algebra which fits into a 
Hochschild extension (see \cite[Page 311]{weibel})
\begin{equation}\label{eq:Hochschild}
0 \to N \to E \to R \to 0. 
\end{equation} 
Thus $N$ is a square-zero ideal of $E$ such that $E/N=R$. Let $M=Rw_1\oplus Rw_2$ be
a free $R$-module with basis $w_1, w_2$. Then
\[ A = E \oplus M \]
uniquely turns into a (Noetherian) super-commutative superalgebra over $K$ so that 
\[
A_0=E,\quad A_1=M,\quad w_1 w_2=z.
\]
In view of 
\[
I_A = N\oplus M,\quad I_A^2=N,\quad I_A^3=0, \quad \overline{A}= R,\quad I_A/I_A^2=M 
\]   
we have the following:
\begin{itemize}
\item $A$ is regular if and only if $R$ is regular.
\item $A\simeq \wedge_{\overline{A}}(I_A/I_A^2)$ 
if and only if the Hochschild extension \eqref{eq:Hochschild} splits. 
\end{itemize}

Therefore, for the desired counter-example, it suffices to construct a non-split extension of the
form \eqref{eq:Hochschild} with $R$ regular. We choose below an algebro-geometric example of 
such $R$. 

\begin{example}\label{ex:non-split}
Let $k$ be a field of characteristic $p>0$, and
let $K=k(t)$ be the rational function field over $k$.
Define
\[ R=K[x, y]/(x^2-y^p-t). \]
This $R$ is indeed regular. But it is not smooth over $K$ since 
the base extension $R\otimes K(t^{1/p})$, localized at the maximal ideal $(x,\ y-t^{1/p})$, is not
regular. Given $\alpha \in R$, one can define a Hochschild extension of $R$ by $N=Rz$, 
\[ E_{\alpha}= R \oplus N, \]
so that $x^2=y^p-t+\alpha z$ on $E_{\alpha}$. 
A direct computation shows that $\alpha \mapsto E_{\alpha}$
induces an $R$-linear isomorphism 
\[ R/R(2x) \overset{\simeq}{\longrightarrow} H^2_{s}(R, N), \]
where $H^2_{s}(R, N)$ denotes the symmetric 2nd Hochschild cohomology group 
\cite[Page 313]{weibel} which classifies the Hochschild extensions. Therefore, the extension 
$E_{\alpha}$ splits if and only if $\alpha$ is divided by $x$, when $p >2$, and
$\alpha = 0$, when $p=2$. There thus exist non-split extensions in any characteristic. 
 \end{example}
 
\section*{Acknowledgments}
The first-named author was supported by JSPS Grant-in-Aid for Scientific Research (C)~~26400035. 
The second-named author was supported by RFFI Grant 15-31-21169.

\end{document}